\documentclass[11pt,reqno]{amsart}

\usepackage{amssymb}
\usepackage{amsmath}
\usepackage{hyperref}       
\usepackage{url}            
\usepackage{booktabs}       
\usepackage{amsfonts}       
\usepackage{microtype}      
\usepackage{graphicx}
\usepackage{mathtools}
\usepackage{enumitem}
\usepackage{comment}
\usepackage{parskip}
\usepackage[capitalize]{cleveref}
\usepackage{tikz-cd}
\usepackage{subcaption}
\usepackage{listings}
\usepackage{multirow}
\usepackage[hyperref,doi,url=false,style=alphabetic,maxbibnames=99]{biblatex}

\addbibresource{bib.bib}

\AtEveryBibitem{
	 \clearlist{address}
	  \clearfield{date}
	    \clearfield{isbn}
		 \clearfield{issn}
		  \clearlist{location}
		   \clearfield{month}
		    \clearfield{series}
		    \clearfield{note}
			 
			 \ifentrytype{book}{}{
			   \clearlist{publisher}
			     \clearname{editor}
			  }
}
\setcounter{secnumdepth}{6}

\theoremstyle{plain}
\newtheorem{thm}{Theorem}[section]
\newtheorem{lem}[thm]{Lemma}
\newtheorem{prop}[thm]{Proposition}
\newtheorem{cor}[thm]{Corollary}
\newtheorem{conj}[thm]{Conjecture}
\newtheorem{ass}[thm]{Assumption}
\newtheorem{rmk}[thm]{Remark}
\newtheorem{conv}[thm]{Convention}

\theoremstyle{remark}

\theoremstyle{definition}
\newtheorem{defn}[thm]{Definition}
\newtheorem{ex}[thm]{Example}
\newtheorem{cnstr}[thm]{Construction}

\newcommand{\Z}{\mathbb{Z}}
\newcommand{\Q}{\mathbb{Q}}
\newcommand{\N}{\mathbb{N}}
\newcommand{\R}{\mathbb{R}}
\newcommand{\set}[1]{\left\{\,#1\,\right\}}
\newcommand{\paren}[1]{\left(#1\right)}

\newcommand{\inv}{^{-1}}
\newcommand{\M}{M_\phi}
\newcommand{\Mpunc}{M_\phi^\circ}
\newcommand{\Sigmapunc}{\Sigma^\circ}
\newcommand{\MM}{\M(\textbf{p};\textbf{q})}
\newcommand{\corei}{K_i(\textbf{p};\textbf{q})}

\newcommand{\F}{\mathcal F}
\newcommand{\hatF}{\widetilde{\F}}
\renewcommand{\H}{\mathcal H}
\newcommand{\guts}{\mathcal G}
\newcommand{\interst}{\mathcal I}
\newcommand{\bd}{\partial}
\newcommand{\fequiv}{\sim}
\newcommand{\J}{\mathcal J}
\newcommand{\Jpar}{\mathcal J^{\text{partial}}}
\newcommand{\tmax}{t_{\max{}}}
\newcommand{\eps}{\varepsilon}
\renewcommand{\phi}{\varphi}
\newcommand{\titletex}[1]{\texorpdfstring{#1}{\detokenize{#1}}}

\newcommand{\plabel}[1]{}

\usepackage{xstring}
\usepackage{xcolor}
\newcommand{\xlabel}[2]{\IfSubStr{#1}{fix8}{\phantomsection\label{#1} \textcolor{red}{#2}}{#2}}
\newcommand{\mfldcount}{2598}
\newcommand{\mfldpercent}{44.7}

\DeclareMathOperator{\xright}{east}
\DeclareMathOperator{\xleft}{west}
\DeclareMathOperator{\xess}{ess}
\DeclareMathOperator{\xiness}{iness}
\DeclareMathOperator{\xtan}{tan}

\DeclareMathOperator{\Stab}{Stab}
\DeclareMathOperator{\Area}{Area}
\DeclareMathOperator{\Homeo}{Homeo}
\DeclareMathOperator{\rank}{rank}
\DeclareMathOperator{\PSL}{PSL}
\DeclareMathOperator{\interior}{int}

\graphicspath{{figs/}}

\begin{document}
\pagenumbering{gobble}
\pagenumbering{arabic}

\title{Taut foliations, left-orders, and pseudo-Anosov mapping tori}
\author{Jonathan Zung}
\address{Department of Mathematics, Princeton}
\email{jzung@math.princeton.edu}
\begin{abstract}
	For a large class of 3-manifolds with taut foliations, we construct an action of $\pi_1(M)$ on $\R$ by orientation preserving homeomorphisms which captures the transverse geometry of the leaves. This action is complementary to Thurston's universal circle. Applications include the left-orderability of the fundamental groups of every non-trivial surgery on the figure eight knot. Our techniques also apply to at least \mfldcount{} manifolds representing \mfldpercent{}\% of the non-L-space rational homology spheres in the Hodgson-Weeks census of small closed hyperbolic 3-manifolds.
\end{abstract}

\maketitle
\begin{section}{Introduction}\label{sec:intro}
	A taut foliation on a 3-manifold $M$ is a valuable structure. Taut foliations may be used to certify nontriviality of a transverse loops or to certify surfaces of minimum genus in their homology classes, as shown in classic work of Novikov, Roussarie, Thurston, and Gabai \cite{novikov_topology_1965, roussarie_plongements_1974, thurston_norm_1986, gabai_foliations_1983}. The existence of a taut foliation puts constraints on $\pi_1(M)$. For example, Thurston showed that if $M$ is an atoroidal, irreducible 3-manifold with a taut foliation, then $\pi_1(M)$ admits a faithful action on $S^1$ by homeomorphisms~\cite{thurston_three-manifolds_1997, calegari_laminations_2003}\plabel{fix2:1}. In another direction, Kronheimer and Mrowka made a connection to Floer theory via contact and symplectic geometry, showing that a taut foliation gives rise to a nontrivial class in monopole Floer homology \cite{kronheimer_scalar_1997, kronheimer_monopoles_2007}. Ozsv\'{a}th and Szab\'{o} established parallel results in the setting of Heegaard Floer homology \cite{ozsvath_holomorphic_2004}.

	The L-space conjecture, formulated in parts by Boyer-Gordon-Watson and Ozsv\'{a}th-Szab\'{o}, is a proposed sharpening of the connections outlined above \cite{boyer_l-spaces_2011, ozsvath_holomorphic_2004}. It posits that the following are equivalent for an orientable, irreducible rational homology sphere $M$:

	\begin{enumerate}
		\item $M$ has a co-orientable taut foliation
		\item $\pi_1(M)$ is left-orderable (ie $\pi_1(M)$ acts faithfully on $\R$ by orientation preserving homeomorphisms).
		\item $M$ is not an L-space (ie its Heegaard Floer homology $\widehat{HF}(M)$ satisfies the strict inequality $\rank(\widehat{HF}(M))>|H^2(M;\Z)|$).
	\end{enumerate}

	Technology for deciding conditions (1) and (3) is well developed; for example, Dunfield verified the equivalence of (1) and (3) for 99.8\% of the manifolds in his census of ${\sim}300,000$ small hyperbolic rational homology spheres \cite{dunfield_floer_2019}.  Techniques for deciding (2) are harder to come by. For every non-left-orderable group, there is a finite length certificate proving its non-left-orderability. On the other hand, left-orderability is not known to be decidable for three-manifold groups. See Calegari and Dunfield \cite{calegari_laminations_2003} for a discussion\plabel{fix2:2}. Here are a few practical methods for proving the left-orderability of a 3-manifold group:

	\begin{itemize}
		\item \label{item:rep} Try to lift $\PSL(2,\R)$ representations of $\pi_1(M)$ to $\widetilde{\PSL(2,\R)}$ which acts on the the universal cover of the circle at infinity in $\mathbb{H}^2$. See Eisenbud et al or Culler-Dunfield for examples of this technique\cite{eisenbud_transverse_1981,culler_orderability_2018}.
		\item \label{item:circle} If $M$ has a taut foliation, try to lift Thurston's action of $\pi_1(M)$ on $S^1$ to an action on $\R$. This works whenever the Euler class of the plane field tangent to the foliation vanishes. See Calegari-Dunfield or Boyer-Clay for examples of this technique \cite[Section~7]{calegari_laminations_2003} \cite[Section~5]{boyer_taut_2019}.
		\item We say that a foliation $\F$ is \emph{$\R$-covered} if the leaf space of the lift of $\F$ to the universal cover of $M$ is homeomorphic to $\R$ (eg in the case of the foliation of a fibered 3-manifold by fiber surfaces). Since $\pi_1(M)$ always acts on the leaf space, we get an action on $\R$.
	\end{itemize}

	The third technique is appealing since it directly uses the transverse geometry of the foliation, but is limited in generality since most taut foliations are not $\R$-covered. In this paper, we demonstrate a method for improving the third technique to work for more general taut foliations. We study a family of 3-manifolds with taut foliations which are not $\R$-covered, but whose leaf spaces admit a map to $\R$ such that the action of $\pi_1$ descends to an action on $\R$. The question of the existence of taut foliations and left-orderings compatible in this sense was first raised by Thurston \cite[Section~8.1]{calegari_problems_2002}\cite{thurston_three-manifolds_1997}.

	We prove the following:
	\begin{thm}\label{thm:main}
		Let $\Sigma$ be an orientable closed surface and $\phi\colon\Sigma\to\Sigma$ a pseudo-Anosov map with orientable invariant foliations. Suppose further that $\phi$ preserves the orientation of these foliations. Let $\M$ be the mapping torus of $\phi$. Let $\MM$ be the result of non-zero surgery along any collection of closed orbits of $\phi$. If the surgery slopes all have the same sign, then $\MM$ has left-orderable fundamental group.
	\end{thm}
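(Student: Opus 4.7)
My plan is to construct a taut foliation $\F$ on $\MM$ by performing foliated Dehn filling on the weak stable foliation of the pseudo-Anosov suspension flow, and then to extract a faithful $\pi_1(\MM)$-action on $\R$ from the lifted leaf space of $\hatF$. The orientation hypotheses on $\phi$ ensure transverse orientability throughout, while the same-sign hypothesis on the surgery slopes is precisely what allows the branching in the leaf space to be collapsed to $\R$ in a group-equivariant way.

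First, I build the foliation. The pseudo-Anosov suspension flow on $\M$ carries a two-dimensional weak stable foliation $\F^s$, singular along the closed orbits sitting over the singular points of $\phi$. Transverse orientability of $\F^s$ follows from $\phi$ preserving the orientations of its invariant foliations. Drilling the closed orbits along which we surger yields a cusped manifold $\Mpunc$ on which $\F^s$ spirals into each boundary torus. Using the standard spinning construction, I modify $\F^s$ on a collar of each boundary torus so that the leaves approach the prescribed slope $p_i/q_i$; this is possible because every $p_i/q_i$ is nonzero. Filling each cusp with a suitably foliated solid torus then gives a taut, transversely orientable foliation $\F$ on $\MM$.

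Second, I analyze the leaf space $\mathcal{L}$ of $\hatF$ on $\widetilde{\MM}$. $\mathcal{L}$ is simply connected and 1-dimensional, and is equipped with a $\pi_1(\MM)$-equivariant local order induced by the transverse orientation, but it is typically non-Hausdorff: each lifted surgery solid torus contributes a non-Hausdorff branch. The key local calculation is that the direction of branching at each lifted surgery torus (upward or downward in the transverse orientation) is governed by the sign of $p_i/q_i$. When all slopes share a sign, all branching in $\mathcal{L}$ is unidirectional, so the relation of being indistinguishable from the non-branching side is a $\pi_1$-equivariant equivalence on $\mathcal{L}$ whose quotient is a simply connected, Hausdorff 1-manifold carrying an order-preserving $\pi_1(\MM)$-action. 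This quotient is then homeomorphic to $\R$, furnishing the desired action.

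Finally, faithfulness of the $\R$-action should follow because any kernel element would preserve every leaf of $\hatF$ setwise, and the pseudo-Anosov dynamics force the lifted leaves to be sufficiently rich that no nontrivial deck transformation can do so. The main technical obstacle, I expect, lies in the middle step: establishing the precise correspondence between slope sign and branching direction, and then verifying that the coherent one-sided collapse of $\mathcal{L}$ really produces an action on $\R$ rather than on a non-Hausdorff space or on $S^1$. This is where the pseudo-Anosov dynamics interact most delicately with the surgery data, and it is the conceptual and technical heart of the argument.
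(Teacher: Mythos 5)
Your overall strategy of building a taut foliation via a foliated Dehn filling of $\F^s$ and then collapsing the lifted leaf space to $\R$ matches the spirit of the paper, but the crucial middle step contains a genuine error. You claim that when all slopes share a sign, ``all branching in $\mathcal{L}$ is unidirectional,'' and that naively collapsing the indistinguishable points yields $\R$. Neither claim is correct. In the paper's construction (filling the complementary ideal polygon bundles with monkey saddles), each filled saddle region contributes \emph{two} branch loci $B_i^+$ and $B_i^-$, one in each transverse direction, regardless of the sign of $p_i/q_i$; this is \cref{prop:nobranching}. The same-sign hypothesis does not make the branching one-sided. Moreover, even the paper's carefully chosen $\pi_1$-equivariant gluing $\fequiv$ of branches (\cref{cnstr:gluing}) \emph{fails} to produce $\R$: the end of \cref{sec:gluing} notes explicitly that $L/\fequiv$ is not homeomorphic to $\R$ because of a phenomenon illustrated in the binary-tree example of \cref{fig:tree2}, where a gluing that looks locally innocuous collapses to a tree with dense, infinite-valence branching. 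This is precisely the obstruction you would have to confront, and your proposal does not address it.

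The paper's actual mechanism is different in kind. Instead of quotienting the leaf space directly, it builds an $\R$-bundle $\pi$ over $\Mpunc$ with a flat \emph{partial} connection $\Jpar$ defined by holonomy of $\F^s$ together with carefully tuned dilation data at the prongs (Type 3 curves), then studies the blowup times of parallel sections. The same-sign hypothesis enters here, not at the level of branch direction: it forces $\alpha_i=\lambda^{m_iq_i/p_i}\ge 1$ for all $i$, so parallel sections blow up only on one side of the zero section, and \cref{prop:surjective,prop:injective} show $t_{\max{}}$ is a bijection. This lets one compactify each fiber to a circle, extend $\Jpar$ to an honest flat connection $\J$ on an $S^1$-bundle, check (\cref{prop:fillable,prop:fillable2}) that the meridian monodromies are trivial so the bundle extends over $\MM$, and use the $\infty$-section to see the Euler class vanishes, hence lift to a flat $\R$-bundle. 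Finally, you do not need faithfulness: the paper invokes the theorem of Boyer--Rolfsen--Wiest that a \emph{nontrivial} homomorphism from the fundamental group of an irreducible 3-manifold to a left-orderable group already implies left-orderability, and nontriviality is immediate because holonomy around a wall of a saddle region is a dilation. Proving faithfulness would be an unnecessary and substantially harder task.
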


	Here we take the zero slope, also known as the \emph{degeneracy slope}, to be the one which crosses no prongs. See \cref{conv:slopes} for a full explanation of our slope conventions.

	\begin{thm}\label{thm:main2}
		With the assumptions of \cref{thm:main}, there is a taut foliation $\F$ on $\MM$. Let $L$ be the leaf space of $\hatF$. Then there is a continuous, monotone map $f\colon L \to \R$ so that the action of $\pi_1(\MM)$ on $L$ descends to a nontrivial action on $\R$.
	\end{thm}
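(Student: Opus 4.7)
The plan is to construct $\F$ by blowing up the suspension stable foliation of $\phi$ along the surgered orbits and Dehn filling, and to extract $f$ from the transverse $1$-form provided by the orientability hypothesis. I start from the singular $2$-foliation on $\M$ obtained by suspending the stable foliation of $\phi$; its singular set is exactly the union of prong orbits. A Fried-type dynamical blow-up along the surgered orbits replaces each such orbit by a torus boundary and produces a compact manifold $\Mpunc$ carrying a genuine (non-singular) foliation whose restriction to each new boundary is a suspension of the prong rays. The hypothesis that all surgery slopes are nonzero and share a sign relative to the degeneracy slope is precisely what allows one to extend this boundary foliation across a foliated Dehn filling of each new torus while keeping the result taut, yielding $\F$ on $\MM$.

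The orientability hypothesis provides a closed $1$-form $\omega$ on $\Sigma$ whose kernel is the stable foliation and which vanishes only at the prongs, and since $\phi$ preserves the orientations one has $\phi^{*}\omega=\lambda\omega$, where $\lambda>1$ is the dilatation. Lift $\omega$ to $\widetilde\Sigma$ and choose a primitive $f_0\colon\widetilde\Sigma\to\R$ with $df_0=\widetilde\omega$. Then $f_0$ is constant on stable leaves and pulls back through $\widetilde{\M}\cong\widetilde\Sigma\times\R\to\widetilde\Sigma$ to a function constant on the leaves of the lifted suspension stable foliation, hence descends to the leaf space. Deck transformations in $\pi_1(\Sigma)$ translate $f_0$ by the period homomorphism $\gamma\mapsto\int_\gamma\omega$, and the monodromy dilates $f_0$ by $\lambda$, so $\pi_1(\M)$ acts on $\R$ by an affine action for which $f_0$ is equivariant. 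Monotonicity along coherent oriented arcs in the leaf space follows because $\omega$ is positive in the transverse orientation on the regular part; at branch points corresponding to $p$-prong singularities, $\omega$ vanishes to order $p/2-1$, $f_0$ has a critical value, and coherent arcs through the branch are routed so that $f_0$ remains monotone.

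To obtain the leaf space $L$ of $\hatF$ on $\MM$ and extend the map, one must carry the above construction across the blown-up and filled regions, and this is where the slope hypothesis is essential. On each new boundary torus $\omega$ pulls back to a form whose zero locus is exactly the suspension of the prongs, and because all surgery slopes share a sign relative to the degeneracy slope, one may choose the foliated filling so that each meridian of a filled solid torus has vanishing $f$-displacement. The affine action therefore descends to a nontrivial action of $\pi_1(\MM)$ on $\R$, nontrivial because the monodromy still acts by dilation by $\lambda>1$. The main obstacle is precisely this bookkeeping: one must simultaneously organize the blow-up, the foliated filling, and the extension of $f$, and check that the meridional relations are consistent with the $\R$-action. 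A single surgery slope on the opposite side of the degeneracy slope would force some meridian to translate $\R$ by a nonzero amount and obstruct the descent, so the uniform sign hypothesis cannot be relaxed.
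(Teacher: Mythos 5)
Your proposal has the right foliation $\F$ and correctly identifies the role of the suspension stable foliation, but the main analytic idea --- using the transverse measure $\omega$ of $\F^s|_\Sigma$ and a primitive $f_0$ to produce the map $f\colon L\to\R$ --- does not work, and the gap is fatal.

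The representation you build is the transversely affine holonomy of the suspension foliation: $\pi_1(\Sigma)$ acts by translations (periods of $\omega$) and the monodromy acts by the dilation $x\mapsto\lambda x$. Consider the monodromy of a Dehn filling meridian $m$ of slope $(p_i;q_i)$. By \cref{conv:slopes}, $m$ winds $m_iq_i$ times in the suspension direction, contributing a dilation by $\lambda^{m_iq_i}$, and then traverses a small loop encircling a singularity; since that loop is null-homotopic in the closed surface $\Sigma$ and $\omega$ is closed on $\Sigma$, the second piece contributes nothing. Hence $\rho(m)$ is dilation by $\lambda^{m_iq_i}\neq 1$. This is a nontrivial element of $\operatorname{Aff}(\R)$, so $\rho$ does not descend to $\pi_1(\MM)=\pi_1(\Mpunc)/\langle\langle m_i\rangle\rangle$. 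Your phrase ``vanishing $f$-displacement'' only asserts that the translational part of $\rho(m)$ vanishes, which is not the same as triviality; an affine map with a fixed point at $0$ and dilation factor $\neq 1$ still obstructs the descent, independently of the sign condition. Relatedly, the monkey-saddle regions obstruct extending a transversely affine structure across $\guts$: in the universal cover, the interior leaves of a saddle region form an $\R$ that accumulates on the $p_i$ non-separated wall leaves in $B_i^\pm$ (\cref{prop:nobranching}), and no single affine developing chart can place the walls at finite transverse positions compatible with $\Lambda$.

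The paper's construction is organized very differently precisely to fix this. Rather than working with the direction transverse to $\F$, \cref{sec:connection} builds an $\R$-bundle $\pi$ over $\Mpunc$ whose fiber over $p$ is the leaf of $\F^u\cap\Sigmapunc_p$, i.e.\ a direction inside the leaves of $\F^s$, and defines a partial connection $\Jpar$ by $\F^s$-holonomy plus a calibrated stretch by $\lambda^{m_iq_i/p_i}$ on the Type~3 arcs crossing prongs. The exponent is chosen exactly so that the meridian monodromy is trivial (\cref{prop:fillable}): the suspension part contributes $\lambda^{m_iq_i}$ while the $p_i$ prong crossings contribute $(\lambda^{-m_iq_i/p_i})^{p_i}$, and these cancel. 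The ``same sign'' hypothesis then enters through the blow-up analysis of parallel sections in \cref{sec:blowup} (\cref{lem:fastexplode}, \cref{lem:slowexplode}, \cref{prop:surjective}, \cref{prop:injective}), which is what lets one add a point at infinity to each fiber and obtain an honest flat $S^1$-connection $\J$ that unrolls to a flat $\R$-bundle $\widehat\J$ with a $\pi_1(\MM)$-equivariant map $f=f_2\circ f_1$ from $L$. None of this apparatus is recoverable from the transversely affine structure alone, so the ``bookkeeping'' you defer is not a technicality but the entire content of the theorem.
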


	Here monotone means that $f$ respects the natural partial order on $L$, ie if leaf $\lambda_1$ may be connected to leaf $\lambda_2$ by an oriented arc positively transverse to $\hatF$, then $f(\lambda_1)\leq f(\lambda_2)$. \label{fix2:3}

	\begin{cor}\label{cor:fig8}
		For any $n\geq 1$, any non-trivial surgery on the $n$-fold cyclic branched cover of the figure-eight knot has left orderable fundamental group.
	\end{cor}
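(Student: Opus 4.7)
The plan is to realize every non-trivial surgery on the $n$-fold cyclic branched cover of the figure-eight knot as an instance of the surgery manifold $\MM$ with $\phi = \phi_0^n$, and then invoke \cref{thm:main}. The figure-eight knot complement fibers over $S^1$ with once-punctured torus fiber and linear Anosov monodromy $\phi_0 = \bigl(\begin{smallmatrix} 2 & 1 \\ 1 & 1 \end{smallmatrix}\bigr)$; extending $\phi_0$ to the closed torus $T^2$ makes it a smooth Anosov diffeomorphism with the puncture $p$ a fixed point. Its eigenvalues $\tfrac{3\pm\sqrt{5}}{2}$ are positive, so the smooth, orientable stable and unstable foliations of $\phi_0$, and hence of $\phi_0^n$ for every $n \geq 1$, are preserved in orientation.

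First, I would identify the $n$-fold cyclic cover of $S^3 \setminus K$ with the mapping torus of $\phi_0^n$ acting on $T^2 \setminus \{p\}$; its compactification is $M_{\phi_0^n}$, in which $p$ traces a single closed orbit $\gamma$. The $n$-fold cyclic branched cover $\Sigma_n$ is the Dehn filling of $M_{\phi_0^n} \setminus \gamma$ whose meridian is the lifted meridian of $K \subset S^3$, and the branch knot $K_n \subset \Sigma_n$ is the core of this filling. Every surgery on $K_n \subset \Sigma_n$ is therefore of the form $M_{\phi_0^n}(\gamma; s)$ considered in \cref{thm:main}.

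Since a single closed orbit is involved, the ``same sign'' hypothesis is vacuous, so it suffices to check that every non-trivial surgery on $K_n$ has a \emph{non-zero} slope in the sense of \cref{conv:slopes}. The expectation is that the degeneracy slope on $\partial(M_{\phi_0^n} \setminus \gamma)$ coincides with the lifted meridional slope of $K$, namely the slope whose filling recovers $\Sigma_n$. Granting this, every non-trivial surgery on $K_n$ has non-zero degeneracy slope, and \cref{thm:main} directly yields the desired left-ordering on the fundamental group of the surgered manifold.

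The main obstacle is this final slope identification: translating the standard meridian--longitude framing on $\partial(S^3 \setminus K)$, pulled back through the $n$-fold cyclic cover, into the degeneracy framing of \cref{conv:slopes}, and verifying that the degeneracy slope is precisely the lifted meridian. This is a dynamical computation at the closed orbit of the suspension flow of $\phi_0^n$, and it must be performed uniformly in $n$, since for large $n$ the covering distorts the slope lattice on the cusp.
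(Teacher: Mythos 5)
Your plan is the same as the paper's: express the $n$-fold cyclic cover of the figure-eight complement as the once-punctured-torus bundle with monodromy $\phi_0^n$, observe that the orientability hypotheses hold because the eigenvalues of $\phi_0$ are positive, note that the ``same sign'' condition is vacuous for a single orbit, and then apply \cref{thm:main}. The paper's proof is exactly this, compressed into three sentences, with the key identification stated as ``with our slope conventions, surgery along the zero slope yields the $n^{\text{th}}$ cyclic branched cover.'' So you have found the right route; what remains is to actually carry out the step you label your ``main obstacle.''

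Two comments on that step. First, the uniformity-in-$n$ worry is not a real issue: the closed orbit over the puncture has period $m_1=1$, $k_1=2$, and $\omega_1=0$ for \emph{every} $n$ (positive eigenvalues again), so the degeneracy slope of $M_{\phi_0^n}^\circ$ is in each case the curve that goes once around the suspension direction and does not wind; this is precisely the connected preimage of the degeneracy slope of $M_{\phi_0}^\circ$ under the cyclic cover of the cusp torus, just as the lifted meridian of $\Sigma_n$ is the connected preimage of the meridian of $K$. Hence it suffices to check $n=1$. Second, the $n=1$ check is straightforward and you should make it explicit rather than call it an expectation: the degeneracy slope $\tau$ is the section of $\partial M_{\phi_0}^\circ\to S^1$ through a prong fixed point of $\phi_0|_{\partial F}$, i.e.\ the stable letter $t$ in the HNN presentation $\pi_1(M_{\phi_0}^\circ)=\langle a,b,t\mid tat^{-1}=\phi_{0*}(a),\ tbt^{-1}=\phi_{0*}(b)\rangle$ with basepoint at a prong. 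Killing $t$ imposes $a=\phi_{0*}(a)$, $b=\phi_{0*}(b)$, which already on homology forces $a=b=0$ since $\det(I-A)=-1$, and on $\pi_1$ collapses the free group to the trivial group; so the $\tau$-filling has trivial fundamental group and is $S^3$. By Gordon--Luecke, $\tau$ is therefore the meridian of the figure-eight knot, which is the content of the paper's assertion. With that in hand, your argument closes.
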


	Previously, orderability for surgeries on the figure eight knot was known only for slopes in $[-4,4]\cup \Z$. The range $(-4,4)$ was treated by the representation theoretic approach of Boyer, Gordon, and Watson \cite{boyer_l-spaces_2011}, while the toroidal exceptional surgeries $\set{4,-4}$ were resolved by Clay, Lidman, and Watson \cite{clay_graph_2013} using a gluing theorem for amalgamations of left-orderable groups. Fenley's work on $\R$-covered Anosov flows applies to show that integer surgeries on the figure eight knot are left-orderable \cite{fenley_structure_1998}. Hu recently gave another approach to the case of integer slopes by showing that certain taut foliations on these manifolds have vanishing Euler class \cite{hu_euler_2019}. Hu also proves a negative result: for slopes outside $[-2,2] \cup \Z$, there does not exist a co-orientable taut foliation which both has trivial Euler class and is transverse to the Dehn surgery core. Thus, the action of the universal circle on such foliations does not lift to an action on $\R$. We discuss this further in \cref{rmk:eulerzero}.

	In \cref{sec:foliations}, we set up notation and construct taut foliations on the manifolds of \cref{thm:main}. In \cref{sec:branching}, we analyze the branching in the leaf spaces of these foliations.

	Our approach to defining the map $f\colon L\to \R$ is to glue together certain branches of the leaf space $L$. This point of view is outlined in \cref{sec:gluing}. More formally, in \cref{sec:connection} we define an $\R$-bundle with structure group $\Homeo^+(\R)$ and a flat \emph{partial connection} $\Jpar$. We complete this bundle by adding a point at infinity to each fiber. The resulting $S^1$ bundle has an honest flat connection $\J$. Moreover, this $S^1$ bundle has vanishing Euler class and hence lifts to a flat $\R$ bundle.

	Finally, in \cref{sec:computations} we report on computations showing that manifolds satisfying the hypotheses of \cref{thm:main} are abundant in the Hodgson-Weeks census.

	\begin{subsection}*{Acknowledgements}
		The author would like to thank David Gabai and Sergio Fenley for several helpful discussions about this work. Nathan Dunfield and Mark Bell generously shared data from their census of monodromies of small hyperbolic manifolds. A special thanks is owed to the referee whose exceptionally detailed and patient reading resulted in many corrections and improvements to the article. The author is indebted to Peter Ozsv\'{a}th for his consistent encouragement and guidance during this project.
	\end{subsection}
\end{section}

	\begin{section}{Notation and conventions}
	In this section, we set up notation that will be used throughout the article. Let $\Sigma$ be a closed orientable surface. Let $\phi\colon\Sigma\to\Sigma$ be a pseudo-Anosov map such that its stable and unstable foliations, denoted $\F^s|_\Sigma$ and $\F^u|_\Sigma$, are orientable. This implies that each singularity of $\phi$ has an even number of prongs. Suppose further that $\phi$ preserves the orientations of $\F^s|_\Sigma$ and $\F^u|_\Sigma$. 

	Let $M_\phi$ denote the mapping torus of $\phi$, and let $K_1,\dots,K_n \subset \M$ be the suspensions of any $n$ periodic orbits of $\phi$. For ease of exposition, we will always assume that the suspensions of the singularities of $\phi$ are included in $K_1,\dots,K_n$. 	
	
	Let $\Mpunc=\M\setminus\set{K_i}$. We will use $\MM$ to denote the manifold obtained by slope $(p_i;q_i)$ surgery along $K_i$. Let $\corei$ denote the core of the Dehn filling of $K_i$ in $\MM$. Our slope conventions are explained below in \cref{conv:slopes}.\plabel{fix3:5}.

	Let $\Lambda|_\Sigma$ be the stable invariant lamination of $\phi$ produced by splitting open $\F^s|_\Sigma$ along the prongs at each singularity. Let $\F^s$, $\F^{u}$, and $\Lambda$ be the suspensions of $\F^s|_\Sigma$, $\F^u|_\Sigma$, and $\Lambda|_\Sigma$ in $M_\phi$. (To be precise, we should take the suspension of $\Lambda|_\Sigma$ using an appropriate blow-up of the flow $\phi$.)\plabel{fix3:2} The orientability constraints are equivalent to the orientability of $\F^s$. \plabel{fix:33}

	\begin{ex} The figure eight knot complement fibers over the circle with a genus 1 fiber and pseudo-Anosov monodromy. We can choose coordinates on the fiber $T^2 \setminus (0,0)$ so that the monodromy is $\begin{psmallmatrix} 2 & 1 \\ 1 & 1 \end{psmallmatrix}$. Since this matrix has distinct positive real eigenvalues, the monodromy preserves the orientation of the invariant foliations as desired.
	\end{ex}

	\begin{proof}[Proof of \cref{cor:fig8}]
		Consider the $n^{th}$ cyclic branched cover of the figure eight knot. The lift of the figure eight knot to this cyclic branched cover is a fibered knot with monodromy $\begin{psmallmatrix} 2 & 1 \\ 1 & 1 \end{psmallmatrix}^n$. With our slope conventions, surgery along the zero slope (ie the degeneracy slope) yields the $n^{th}$ cyclic branched cover. By \cref{thm:main}, surgery along any non-zero slope yields a manifold with left-orderable fundamental group.\plabel{fix3:3}
	\end{proof}

	\begin{ex} We can generate examples with a given fiber genus and prescribed singularities by enumerating periodic splitting sequences of train tracks, as in \cite{penner_combinatorics_2016}. One of the lowest volume examples appearing in the genus 2, 1-singularity enumeration is the 1-cusped hyperbolic manifold m038.
	\end{ex}

	\begin{figure}[ht!]
		\centering
		\includegraphics[width=70mm]{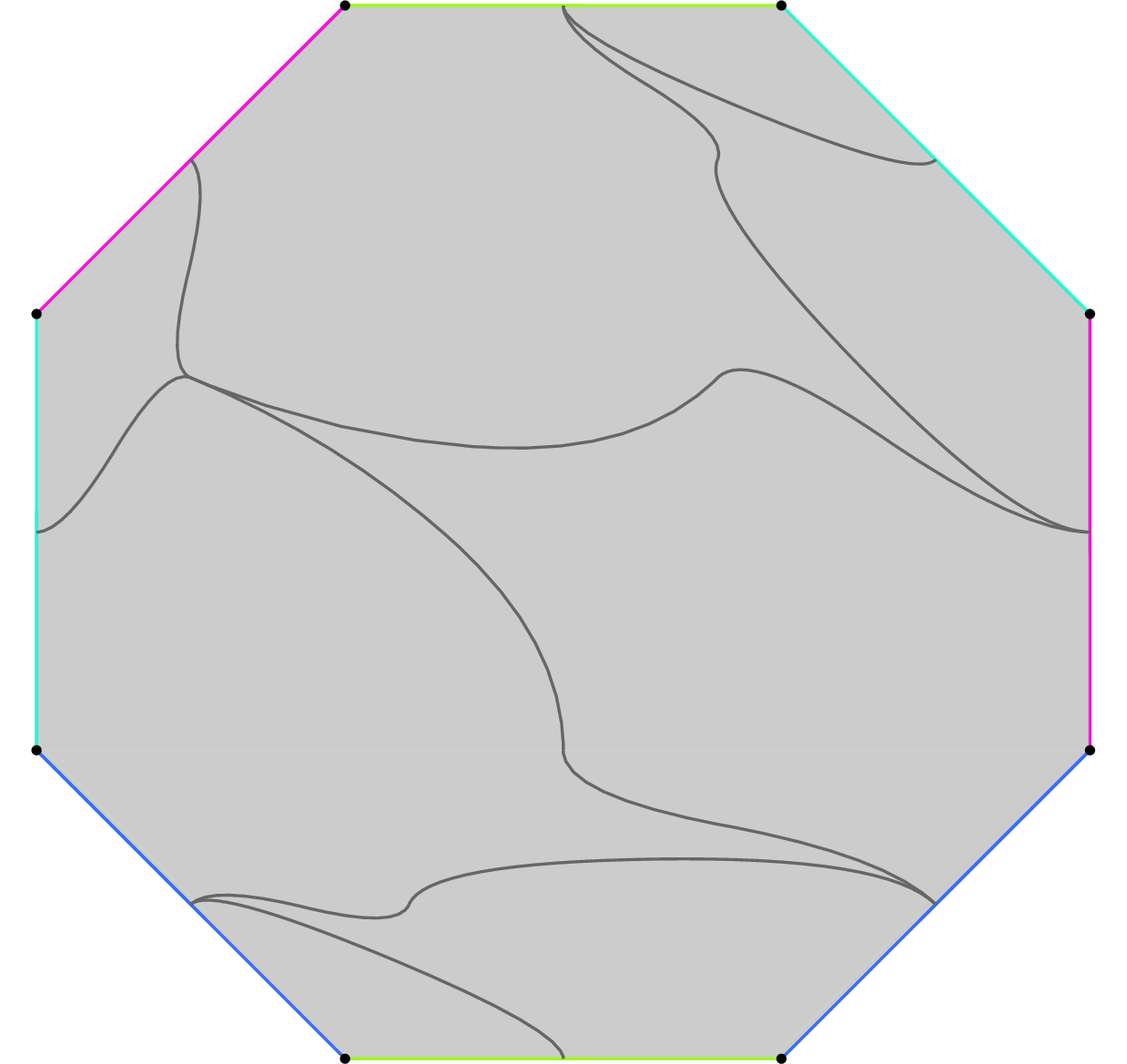}
		\caption{This figure shows a train track carrying an invariant lamination for the monodromy of the fibered, 1-cusped 3-manifold m038. The fiber surface has genus 2 with one puncture at the vertices of the octagon. The complement of the train track is a punctured ideal hexagon. The figure was generated using Mark Bell's program \texttt{flipper} \cite{bell_flipper_2013}.}
	\end{figure}

	\begin{ex} The $(-2,3,7)$ pretzel knot is fibered with a genus 5 fiber and pseudo-Anosov monodromy. In this case, the monodromy $\phi$ has a single 18-prong singularity at the boundary of the fiber, so the invariant foliations are orientable. However, $\phi$ reverses this orientation. The branched double cover of the $(-2,3,7)$ pretzel knot does preserve the orientation of the invariant foliations, and so satisfies the given condition.
	\end{ex}

	\begin{conv}[Slope conventions]\label{conv:slopes}
		For each $i$, let $m_i$ be the period of the orbit $K_i$. The $m_i$ singularities in $K_i\cap\Sigma$ have the same number, denoted $k_i$, of prongs. We assume throughout that $k_i$ is even. Define $\omega_i$ to be the integer in $[0,k_i)$ such that $2\pi \omega_i$ is the counterclockwise angle by which $\phi^{m_i}$ rotates one of these singularities. We are thinking of each singularity is a cone point with angle $2\pi k_i$. \plabel{fix3:4} \plabel{fix2:5}

		We describe slopes in a slightly nonstandard way. A slope of $(p_i;q_i)$ corresponds with a curve in $\bd N(K_i)$ defined as follows. Choose a point near $K_i\cap \Sigma$ and flow it along the suspension flow of $\phi$ for time $m_iq_i$\plabel{fix2:4.7}. (Here we set the speed of the flow that the time 1 flow of the suspension flow intersects each fiber once and returns to the starting fiber.) Typically, the resulting curve will not close up since $\phi^{m_i}$ may rotate singularities. We close it by appending a path in $\Sigma$ which walks around the relevant singularity by a clockwise angle of $2\pi p_i$. Not all such pairs represent slopes; a pair $(p_i;q_i)$ corresponds with a closed curve if and only $p_i = \omega_iq_i \mod k_i$.

		We always assume $q_i\geq 0$. We say that a slope is positive, resp. negative when $p_i$ is positive, resp. negative. We say that the slope is $\infty$ when $q_i=0$. The $\infty$ slope corresponds with the fiber slope and is declared to be both positive and negative. The zero slope or the degeneracy slope is $(p_i;q_i)=(0;k_i/\gcd(\omega_i,k_i))$. In an abuse of notation, the zero slope and the infinity slope may intersect more than once.
	\end{conv}

	$\F$ will denote the taut foliation to be constructed in \cref{cnstr:foln}. Let $\guts$ denote the guts of $\Lambda$, ie the compact subspace of $M\setminus \Lambda$ obtained by chopping off the ends of the ideal polygon bundles. Let $\interst$ denote the interstitial region, ie the part of $M\setminus \Lambda$ we just cut off. Topologically, $\interst$ is a disjoint union of I-bundles over half-infinite cylinders. We have $\MM = \Lambda \cup \guts \cup \interst$. Note that the distinct components of $\guts$ have disjoint closures.

	It will also be convenient to work with another decomposition $\MM = \Lambda' \cup \guts'$. We obtain $\Lambda'$ by blowing down $\interst$ and define $\guts'$ to be the closure of $\MM\setminus \Lambda'$. Some of the leaves of $\Lambda'$ are branched surfaces instead of surfaces, so $\Lambda'$ is not a lamination, but a branched lamination. The advantage of $\Lambda'$ over $\Lambda$ is that there is a Solv metric supported on $\Lambda'$.

	\begin{figure}[ht]
		\def\svgwidth{0.6\linewidth}
		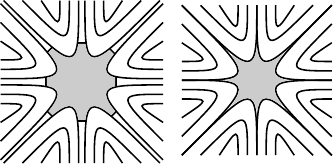
		\caption{Blowing down $\interst$. The shaded region on the left is $\guts$, and the shaded region on the right is $\guts'$.}
	\end{figure}

	The following lemma will be useful later:
	\begin{lem}\label{lem:lambdaprime}
		\renewcommand{\Sigmapunc}{S}
		Let $\Sigmapunc$\plabel{fix2:19} be a fiber surface in the fibered manifold $\MM\setminus \guts$. Suppose $\gamma$ is a path in $\Sigmapunc \cap (\MM \setminus \guts)$ with endpoints on two walls of $\guts$. Suppose that $\gamma$ makes $N_{\tan}$ tangencies with $\F$. Then there is an arc $\gamma'$ in $\Sigmapunc \cap (\MM\setminus \guts')$ with endpoints on the corresponding two walls of $\guts'$ and in the same homotopy class rel. endpoints as $\gamma$ such that $\gamma'$ makes at most $N_{\tan}$ tangencies with $\Lambda'$. (Here the homotopy happens in $\Sigmapunc \cap (\MM \setminus \guts)$ \plabel{fix3:6}and we are identifying relative homotopy classes of paths in $(\Sigmapunc \cap (\MM\setminus \guts),\Sigmapunc \cap \partial \guts)$ and $(\Sigmapunc \cap (\MM \setminus \partial \guts'), \Sigmapunc \cap \partial \guts')$ in the obvious way.) The converse also holds.
	\end{lem}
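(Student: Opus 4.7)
My plan is to realize the blowdown by an explicit collapse map $\pi\colon \MM \to \MM$ that is the identity outside $\interst$ and fiberwise collapses each interstitial I-bundle to its spine on $\bd\guts'$, and to take $\gamma' = \pi\circ\gamma$ in the forward direction (and an appropriate lift of $\gamma'$ in the converse). The key geometric fact I would use is that the I-fibration on $\interst$ is contained inside the fiber surfaces of the $S^1$ fibration on $\MM\setminus\guts$, because $\interst$ arises from splitting $\F^s|_\Sigma$ along prongs within each fiber of $\Sigma$. Consequently $\pi$ restricts on each fiber $\Sigmapunc\cap(\MM\setminus\guts)$ to a collapse landing in $\Sigmapunc\cap(\MM\setminus\guts')$, carrying walls of $\guts$ to walls of $\guts'$ in the natural correspondence.

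For the forward direction, after a small generic perturbation if necessary, $\gamma' = \pi\circ\gamma$ lies in $\Sigmapunc\cap(\MM\setminus\guts')$ with endpoints on the correct walls and represents the image under $\pi_*$ of the relative homotopy class of $\gamma$. Decomposing $\gamma$ into maximal sub-arcs contained in $\Lambda$ and maximal sub-arcs contained in components of $\interst$, tangencies of $\gamma$ with $\F$ occurring in $\Lambda$ correspond bijectively to tangencies of $\gamma'$ with $\Lambda'$ at points away from the branch locus of $\Lambda'$. Each sub-arc of $\gamma$ contained in a component of $\interst$ collapses to a single point of $\gamma'$ on the branch locus of $\Lambda'$, which can contribute at most one tangency with $\Lambda'$ regardless of how many tangencies the sub-arc had with $\F$ inside $\interst$. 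Hence the total tangency count cannot increase.

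For the converse, given $\gamma'$ with $N_{\tan}$ tangencies with $\Lambda'$, I would lift by replacing each point where $\gamma'$ meets the branch locus of $\Lambda'$ by a short arc crossing the corresponding component of $\interst\cap\Sigmapunc$ transverse to the I-fibration, glued to the rest of $\gamma'$. Since the I-fibers are transverse to $\F$, such a transverse crossing can be chosen to introduce no new tangencies with $\F$ inside $\interst$, so the lift $\gamma$ has at most $N_{\tan}$ tangencies with $\F$ and lies in the correct homotopy class by construction.

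The main obstacle I expect is rigorously establishing the compatibility of $\pi$ with the fibration on $\MM\setminus\guts$, so that fiberwise blowdown is well-defined at the level of the fiber surfaces $\Sigmapunc$. Once this compatibility is in hand, counting tangencies reduces to the straightforward local picture of collapsing an interstitial sector within a punctured disk, and the two directions follow from the analysis above.
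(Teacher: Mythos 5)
Your core idea---realize the blowdown as an explicit collapse $\pi$ that is the identity outside $\interst$ and kills the I-fibers, then take $\gamma' = \pi\circ\gamma$---is the same mechanism the paper uses. However, there is a genuine gap in the forward direction, at the claim that ``each sub-arc of $\gamma$ contained in a component of $\interst$ collapses to a single point of $\gamma'$.'' This is false without further preparation. A maximal sub-arc of $\gamma$ in $\interst$ collapses to a point only if its two endpoints lie on the \emph{same} I-fiber of $\interst\cap\Sigmapunc$. If the sub-arc enters and exits $\interst$ through different I-fibers---which is exactly what happens whenever it carries a tangency with $\F$ inside $\interst$, e.g.\ a ``dip'' that enters and leaves through the same boundary arc of $\Lambda$---then $\pi$ sends it to a nondegenerate segment lying \emph{inside} a branched leaf of $\Lambda'$. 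Such a $\gamma'$ does not have a well-defined finite tangency count with $\Lambda'$, and in particular the conclusion ``contributes at most one tangency'' fails. So your count does not go through as stated.

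What is missing is a normalization step before collapsing: one must first homotope $\gamma$ rel.\ endpoints so that each component of $\gamma\cap\interst$ is transverse to $\F$ (equivalently, contained in a single I-fiber), pushing all tangencies with $\F$ into $\Lambda$. Only then does the fiberwise collapse send each interstitial sub-arc to a single point, and your bookkeeping becomes valid. The paper carries out exactly this normalization, and does so in two stages---first pushing tangencies into the interior of $\interst$, blowing down a thin sub-I-bundle $\interst_\eps$ to create room in the lamination, and then pushing tangencies back out into $\Lambda_\eps$---because one needs space in the lamination to perform the second homotopy without changing the tangency count. Your converse direction is fine and matches the paper's. I would suggest inserting the transversality normalization before the collapse and checking that it can be done without increasing $N_{\tan}$; that is where the real content of the lemma lies.
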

	\begin{proof}
		First, homotope $\gamma$ so that all of its tangencies with $\F$ and all of its self-intersections occur on the interior of $\interst$. This may be done without introducing any new tangencies. Let $\interst_\eps$ be the subset of $\interst$ of thickness less than $\eps$. We begin by blowing down $\interst_\eps$ for some $\eps>0$. Let $\F_\eps$ and $\Lambda_\eps$ be the images of $\F$ and $\Lambda$ under this blowdown.\plabel{fix2:7}
		
		If we choose $\eps$ small enough, then $\gamma$ intersects $\F\cap \interst_\eps$ transversely. Moreover, if we choose $\eps$ small enough, then the blowdown changes the tangent plane field of $\F$ by a uniformly small amount. Thus, we can arrange that that $\gamma$ has $N_{\tan}$ tangencies with $\F_\eps$. 

		Now, homotope $\gamma$ so that all of its tangencies with $\F_\eps$ and its self intersections occur inside $\Lambda_\eps$. At this stage, $\gamma \setminus \Lambda_\eps$ is a collection $\mathcal C$ of disjoint segments transverse to $\F_\eps$. We now blow down the rest of $\interst$ so that each of the segments in $\mathcal C$ blows down to a point. The curve $\gamma$ then blows down to a curve $\gamma'$ with the desired properties.

		The converse is easier; $\Lambda$ is obtained by splitting open the prongs in $\Lambda'$, and this may be done without introducing new tangencies with $\gamma'$.
	\end{proof}
	\end{section}

\begin{section}{Foliations on surgeries on pseudo-Anosov mapping tori}\label{sec:foliations}
		In this section, we describe the construction of taut foliations on our class of 3-manifolds. One might colloquially describe the construction as ``stuffing the guts of the suspension of the $\phi$-invariant lamination with monkey saddles".

	\begin{cnstr}\label{cnstr:suturedtorus}
		Given a sutured solid torus $D$ such that the sutures are parallel with non-meridional slope, there is a foliation of $D$ by planes compatible with the sutures. Recall that the sutures on a sutured manifold divide its boundary into (possibly disconnected) positive and negative subsurfaces.\plabel{fix:5} We can construct the desired foliation beginning with the obvious product foliation of the solid torus by disks, and then combing the edges of the disks to expose their positive sides in the positive regions of $\bd D$ and their negative sides in the negative regions of $\bd D$. This is called a foliation by a stack of monkey saddles. See \cref{fig:suturedsolidtorus}.
		\begin{figure}[ht!]
			\centering
			\def\svgwidth{90mm}
			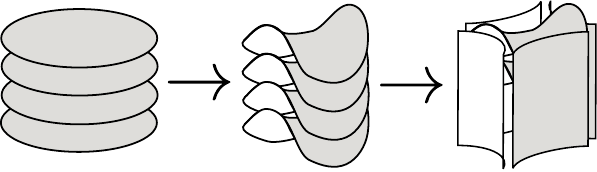
			\caption{On the left is the standard foliation of $D^2\times S^1$, where $S^1$ is cut open. In the middle, we alternately comb the edges of the disks to expose their positive and negative sides. Grey shows the positive sides of the leaves, while white shows the negative sides. On the right, we show the limiting configuration which has 4 annular leaves at the boundary which we call \emph{walls} and infinitely tall saddle-like leaves (homeomorphic to planes) on the interior. The interior saddle-like leaves accumulate on the walls. We can vary the number of legs of the saddle's rider or the gluing of the top and bottom of the picture to get sutures of any desired non-meridional slope.}
			\label{fig:suturedsolidtorus}
		\end{figure}
	\end{cnstr}

	\begin{cnstr}\label{cnstr:foln}
		Let $(p_i;q_i)$ be any choice of slopes with $p_i \neq 0$. Then the manifold $\MM$ carries a taut foliation constructed as follows. Recall that $\Mpunc=\M\setminus\set{K_i}$. Let $\F^s$ be the codimension 1 weak stable foliation of the suspension flow of $\phi$ on $\Mpunc$. Split open $\F^s$ along its prongs to obtain a lamination $\Lambda$ on $\Mpunc \subset \MM$. The complement $\MM \setminus \Lambda$ is a collection of ideal polygon bundles over $S^1$. With our conventions (see \cref{conv:slopes}), these polygons are $2p_i$-gons since a closed curve of slope $(p_i;q_i)$ decomposes into an arc of slope $(0;q_i)$, which intersects no prongs, and an arc of slope $(p_i;0)$, which wraps around an angle of $2\pi p_i$ and therefore intersects $2p_i$ prongs. Fill in these bundles with the foliations constructed in \cref{cnstr:suturedtorus}. The resulting foliation is taut because it contains no compact leaves; in fact, all leaves are either cylinders or planes.
	\end{cnstr}

	\begin{ex}
		When $p_i=1$ for all $i$, the complementary regions can be blown down without without inserting any leaves at all. The resulting 3-manifold carries an Anosov flow. Fried shows that every transitive Anosov flow with orientable invariant foliations is obtained by this construction \cite{fried_transitive_1983}. The transitivity condition is not severe; every Anosov flow on a hyperbolic 3-manifold is transitive. His construction might require the slopes to have different signs, so our results will not hold for all of these manifolds.
	\end{ex}

	\begin{rmk}\label{rmk:eulerzero}
		The Euler class of a foliation constructed via \cref{cnstr:foln} is typically nonzero, so the action of $\pi_1$ on Thurston's universal circle will not in general lift to an action on $\R$. One can see this by applying a result of Hu:

		Let $X$ be a $\Q$-homology solid torus, $k\geq 1$ the order of the longitude of $X$ in $H_2(X,\partial X)$. Let $\mu$ be any meridian. Let $S$ be the set of filling slopes for $X$ yielding a 3-manifold with a co-orientable taut foliation which is transverse to the Dehn surgery core and has zero Euler class. Let $x$ be the Thurston norm of a generator of $H_2(X,\partial X)$. If $x\neq 0$, then Theorem 5.2 of \cite{hu_euler_2019} says that
		\begin{enumerate}
			\item Outside the interval $(-x/k-1,x/k+1)$, $S$ may contain only $\mu$ and the integer slopes.
			\item $S$ is nowhere dense in $\R \cup \{1/0\}$. In particular, it is nowhere dense in $(-x/k-1,x/k+1)$.
		\end{enumerate}

		 To apply Hu's theorem, fill all but one of the boundary components of $\Mpunc$. Since $\Mpunc$ is a pseudo-Anosov suspension, it is a cusped hyperbolic manifold. Therefore, for a generic choice of slopes, the result of filling is a 1-cusped hyperbolic manifold which we call $X$. Suppose further that $X$ is a $\Q$-homology solid torus; otherwise, any filling of $X$ has $b_1>0$ and we can find a left-ordering by other means. Let $x$ be the Thurston norm of a generator of $H_2(X,\partial X)$. By hyperbolicity, $x\neq 0$. Therefore, $X$ satisfies the hypothesis of Hu's theorem. The foliations of \cref{cnstr:foln} are transverse to the Dehn surgery core because the core is transverse to the monkey saddles. Thus, in a generic choice of filling slopes for $X$, the foliations of \cref{cnstr:foln} have non-zero Euler class.

		As an example, take the figure eight knot complement. In this case, $x=1$. By Hu's result, our foliations have non-zero Euler class for filling slopes outside $(-2,2)\cup \Z$.
	\end{rmk}

\end{section}

\begin{section}{Structure of branching in \titletex{$L$}}\label{sec:branching}
	Let $L$ be the leaf space of the lift $\hatF$ of $\F$ to $\widetilde{\MM}$. In this section, we roughly prove that ``all branching in $L$ happens in the saddle regions". These results are not logically required for the proof of the main theorem (and indeed do not hold in full generality), but are interesting in their own right and provide motivation for subsequent constructions.

	In this section, we make the following assumption:
	\begin{ass}\label{ass:hyperbolic}
		All the monkey saddles used in the construction of $\F$ have at least four sides.
	\end{ass}

	In the case where all the monkey saddles have two sides, the resulting manifolds carry Anosov flows. The branching in their invariant foliations was analyzed in \cite{fenley_structure_1998}.

	Recall that for a taut foliation, the leaf space of the universal cover is a simply connected (possibly non-Hausdorff) 1-manifold. Each leaf of the universal cover is homeomorphic to a plane, and its stabilizer under the action of $\pi_1$ by deck transformations is the fundamental group of the projected leaf in $\MM$. Two points in $L$ are said to be \emph{non-separated} if they coincide in the Hausdorffification of $L$. A \emph{branch locus} is maximal set of at least two points in $L$ that are pairwise non-separable. \cite[Chapter~4]{calegari_foliations_2007}

	\begin{prop}\label{prop:nobranching}
		Assume \cref{ass:hyperbolic}. For each orbit $K_i$ there are two branch loci called $B_i^+$ and $B_i^-$ (unique up to covering transformations) each of which is finite and has $p_i$ points corresponding with coherent lifts of the positively or negatively oriented walls of the filling saddle region. Moreover, these are all of the branch loci of $\hatF$ up to covering transformations. 
	\end{prop}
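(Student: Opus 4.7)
The plan is to localize all non-Hausdorff branching of $L$ at the wall leaves of lifted guts regions, and to show that within each lifted guts component $\widetilde{G}_i$ the branching organizes into exactly two loci $B_i^\pm$ of $p_i$ walls each. The universal cover $\widetilde{G}_i$ of the solid torus $G_i$ has boundary consisting of $2p_i$ lifted wall leaves of $\hatF$, alternating between positive walls $\tilde W_1^+,\ldots,\tilde W_{p_i}^+$ and negative walls $\tilde W_1^-,\ldots,\tilde W_{p_i}^-$, while its interior is foliated by lifts of monkey-saddle leaves, each a plane with $p_i$ positive and $p_i$ negative ends arranged to accumulate on the respective walls.

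To exhibit $B_i^+$, I would use the coorientation of $\F$ to parameterize the saddle leaves of $\widetilde{G}_i$ by a height coordinate and take a sequence $\tilde L_n$ with height tending to $+\infty$. Each $\tilde L_n$ has its $p_i$ positive ends asymptotic to distinct positive walls, so as $n\to\infty$ every open neighborhood in $\widetilde{\MM}$ of every $\tilde W_j^+$ eventually contains $\tilde L_n$. Thus for any pair $\tilde W_j^+,\tilde W_k^+$ the sequence $\tilde L_n$ witnesses non-separation, and all of $\tilde W_1^+,\ldots,\tilde W_{p_i}^+$ lie in a single branch locus $B_i^+$. The symmetric argument with heights tending to $-\infty$ produces $B_i^-$. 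The loci $B_i^+$ and $B_i^-$ are disjoint since a sequence approaching a positive wall must ascend the stack and cannot simultaneously descend to a negative wall; walls in distinct lifted guts components are separated because those components are disjoint open subsets of $\widetilde{\MM}$.

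The main obstacle is ruling out branching involving any leaf not already in some $B_i^\pm$. Interior saddle leaves are each flanked by nearby saddles in their own stack, giving them Hausdorff neighborhoods in $L$. For lifts of leaves of $\Lambda$, I would invoke the pseudo-Anosov dynamics: the weak-stable foliation $\F^s$, split along prongs to form $\Lambda$, has Hausdorff leaf space in the universal cover of $\Mpunc$ thanks to the fibration $\Mpunc \to S^1$ and the transverse expansion of $\phi$, and this structure persists after filling in saddles. Concretely, given a putative non-separated pair of $\Lambda$-lifts, I would use \cref{lem:lambdaprime} to transfer any witnessing transverse sequence into $\widetilde{\Lambda'}$, then use the fibration to deduce that the pair lies on a common fiber lift, contradicting non-separation. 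The hypothesis that monkey saddles have at least four sides ($p_i\geq 2$) ensures that each $B_i^\pm$ contains at least two points, so the branching is genuinely nontrivial and distinguishable from the Anosov case treated in \cite{fenley_structure_1998}.
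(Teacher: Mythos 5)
Your argument for the existence and structure of $B_i^\pm$ is correct and matches the paper: the core of a saddle region is transverse to $\F$ and hence essential, so the $S^1$ of saddle leaves lifts to an $\R$ of leaves limiting onto the positive (resp.\ negative) walls, producing the branch loci. That part is fine.

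The second half---ruling out all other branching---is where the proposal has a real gap. You write that the leaf space of $\widetilde\Lambda$ in $\widetilde{\Mpunc}$ is essentially Hausdorff (away from complementary-region walls) and that ``this structure persists after filling in saddles,'' but this persistence is precisely the content of the proposition and is false in general; the paper's \cref{rmk:anosovbranching} explicitly points to Anosov examples (surgery slopes $p_i=1$) where the filled-in foliation has lots of unexpected branching. The issue is that $\pi_1(\MM)$ is a proper quotient of $\pi_1(\Mpunc)$, so $\widetilde{\MM}$ contains only a proper quotient cover $\overline{\Mpunc}$ of $\widetilde{\Mpunc}$, and a path in $\widetilde{\MM}$ connecting two putatively non-separated leaves can weave through lifted guts regions in ways that have no counterpart in $\widetilde{\Mpunc}$. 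Saying you would ``use the fibration to deduce the pair lies on a common fiber lift'' does not engage with this. The paper's actual argument is a careful pulling-tight: it reduces to showing any arc between two leaves can be homotoped to be \emph{efficient} (transverse to $\F$ away from essential subarcs in $\guts$), then minimizes the functional $N_{\xess}+1.01N_{\xtan}+1.02N_{\xiness}$, transfers pieces to $\Lambda'$ via \cref{lem:lambdaprime} and tightens relative to the singular Euclidean (Solv) metric there, and treats three cases depending on how a tightened geodesic interacts with $\guts'$.

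Relatedly, you invoke \cref{ass:hyperbolic} only to conclude $p_i\geq 2$ ``so the branching is genuinely nontrivial,'' but that is not its job. The assumption is used critically in the no-other-branching step: when a tightened arc wraps around a component of $\guts'$ through an angle $>\pi$, one pushes a subarc into $\guts$, and the fact that the ideal polygon fiber has at least four sides is what forces the newly created arc in $\guts$ to be \emph{essential} (its endpoints lie on distinct wall lifts in the $\Z$-cover of the solid torus), so that the move genuinely decreases the objective function. Without this ``negative curvature'' in $\guts$ the minimization argument fails, which is exactly why \cref{prop:nobranching} does not hold for Anosov-type fillings. A correct proof needs to replace the sentence ``this structure persists after filling'' with an argument of this kind, and needs to use the at-least-four-sides hypothesis at the point where new arcs are created in $\guts$, not merely to count points in $B_i^\pm$.
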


	\begin{proof}
		The core of a saddle region is a curve transverse to $\F$ and is therefore non-contractible. Therefore, the $S^1$ worth of saddle-like leaves in a saddle region lifts to an $\R$ worth of saddle-like leaves (plus translates thereof) in $\hatF$. This $\R$ worth of leaves limits to lifts of the positive (resp. negative) walls in the $+\infty$ (resp $-\infty$) direction, so the positively (resp. negatively) oriented walls of a saddle region form a branch locus. We choose one of the translates of this branch locus and call it $B_i^+$ (resp. $B_i^-)$. 

		To show that there is no branching elsewhere, it suffices to show that any curve can be ``pulled tight'' relative to $\F$ so that it is transverse to $\F$ except for controlled intervals in $\guts$. Roughly speaking, our strategy will be to pull $\gamma$ tight relative to the natural Solv metric on $\Lambda'$. 

		We say that an arc in $\guts$ with endpoints on the interior of the walls in $\bd \guts$ is \emph{inessential} if it can be compressed into a wall of $\bd \guts$, and \emph{essential} otherwise. An essential arc with endpoints in oppositely oriented walls can be homotoped in $\guts$ rel. boundary to be transverse to $\F$, while an essential arc with endpoints in similarly oriented walls can be homotoped in $\guts$ rel. boundary to have a single tangency with $\F$. An essential arc of the latter type lifts to a short curve in $L$ that connects two points in $B_i^\pm$. It suffices to show that any curve $\gamma$ with endpoints on leaves $\lambda_1, \lambda_2$ can be homotoped in $\MM$ relative to its endpoints so that either
		\begin{enumerate}
			\item $\gamma$ is transverse to $\F$ except for finitely many tangencies, each of which is contained in an essential subarc of $\gamma$ in $\guts$

				or

			\item $\lambda_1=\lambda_2$ and $\gamma$ lies inside $\lambda_1$.
		\end{enumerate}
		We call such a curve $\emph{efficient}$. Moreover, we can without loss of generality assume that $\lambda_1$ and $\lambda_2$ are walls of $\guts$. For, suppose that $\widetilde \lambda_1$ and $\widetilde \lambda_2$ are non-separated leaves of $\widetilde \F$ in a branch locus not equal to one of the $B_i^\pm$. We may choose $\widetilde \lambda_1$ and $\widetilde \lambda_2$ to be adjacent, meaning that there is a family of leaves $\{\widetilde{\lambda_t}\}_{t>0}$ limiting to both $\widetilde \lambda_1$ and $\widetilde \lambda_2$.\plabel{fix2:adjacent} Adjacency is a stronger condition that non-separatedness, but a pair of non-separated leaves can be connected by a sequence of adjacent leaves; we direct the reader to \cite[Example 4.44, page 170]{calegari_foliations_2007} for further discussion.\plabel{fix3:7} Without loss of generality, assume that $\widetilde \lambda_t$ approaches $\widetilde \lambda_1$ and $\widetilde \lambda_2$ from above in $L$ as $t\to 0$. Any path $\widetilde \gamma\subset \widetilde{\MM}$ from $\widetilde \lambda_1$ to $\widetilde{\lambda_2}$ descends to a path $\gamma$ in $\MM$ that cannot be homotoped in $\MM$ rel. endpoints to become efficient. Since $\F$ is taut, we can augment the beginning of $\gamma$ with a descending transversal from $\lambda_1$ to a wall of $\guts$. Similarly, we can augment the end of $\gamma$ with a descending transversal from $\lambda_2$ to a wall of $\guts$. This resulting path is also not homotopic in $\MM$ to an efficient one, but has endpoints on walls of $\guts$.\plabel{fix2:homotopy}

		Given any arc $\gamma$ with endpoints in leaves $\lambda_1,\lambda_2$ which are walls of $\guts$, we may homotope it in $\MM$ so that it intersects $\bd \guts$ transversely on the interior of the walls. Define
		\begin{itemize}
			\item $N_{\xtan}$ as the number of tangencies between $\gamma$ and $\F$ in $\MM \setminus \guts$.
			\item $N_{\xess}$ as the number of components of $\gamma \cap \guts$ that are essential.
			\item $N_{\xiness}$ as the number of components of $\gamma \cap \guts$ that are inessential.
		\end{itemize}

		Define the complexity function $$I(\gamma)=N_{\xess} + 1.01N_{\xtan} + 1.02N_{\xiness}.$$\plabel{fix2:complexity} Now choose $\gamma$ in its relative homotopy class to minimize $I(\gamma)$. With this choice, we claim that $N_{\xtan}=N_{\xiness}=0$.

		Suppose $N_{\xiness}>0$. Then we can compress an inessential arc in $\guts$. This decreases $N_{\xiness}$ by 1 at the cost of increasing $N_{\xtan}$ by one, violating the minimality assumption.

		Now suppose that $N_{\xtan}>0$. Then there is a component $\gamma_0$ of $\gamma \cap (\MM\setminus \guts)$ containing a tangency with $\F$. The suspension flow of $\phi$ blows up to a flow $X$ on $\MM\setminus\guts$ which is 
		\begin{enumerate}
			\item transverse to the fibering $\MM\setminus \guts \to S^1$,
			\item preserves $\Lambda$,
			\item and exits through the interstitial annuli (ie the annuli in $\partial \guts \cap \partial \interst$) in forward time and exists for all backwards time. 
		\end{enumerate}
		We can arrange that $\gamma_0$ lies in a single fiber surface $S$ of the fibering. We can do this by pushing $\gamma_0$ backwards along the flowlines of $X$ into some choice of fiber surface $S$.\plabel{fix2:flow} This possibly slides $\gamma_0(0)$ and $\gamma_0(1)$ along their respective walls of $\guts$, but does not change the complexity function. Using \cref{lem:lambdaprime}, we may replace $\gamma_0$ with an arc $\gamma_0'$ having the properties that
		\begin{enumerate}
			\item $\gamma_0'$ is in the support of $\Lambda'$ and
			\item $\gamma_0'$ is homotopic to $\gamma_0$ rel. endpoints and
			\item the number of tangencies of $\gamma_0$ with $\F$ is equal to the number of tangencies of $\gamma_0'$ with $\Lambda'$.
		\end{enumerate}
		
		The pseudo-Anosov structure of $\phi$ gives rise to a locally Euclidean Riemannian metric on the surface with boundary $\Lambda'\cap S$.\plabel{fix2:euclidean} Pull $\gamma_0'$ tight relative to this metric. Note that this tightening happens completely inside the surface $S$.\plabel{fix2:tighten} Since the leaves of $\Lambda'\cap S$ are geodesics (ie locally straight lines) with respect to this metric, the number of tangencies between $\gamma_0'$ and $\Lambda'$ does not increase during tightening. We have a couple cases:

		\begin{enumerate}
			\item $\gamma_0'$ tightens to a geodesic and is not contained in a single leaf of $\Lambda'$. Since the leaves of $\Lambda'\cap \Sigma$ are geodesic, $\gamma_0'$ cannot have any tangencies with leaves of $\Lambda'$. Thus, using \cref{lem:lambdaprime} we can also homotope $\gamma_0$ to be transverse to $\F$ and decrease $N_{\xtan}$.
			\item $\gamma_0'$ tightens to an arc contained in a wall of $\bd \guts'$. Note that distinct components of the guts have distinct boundary leaves, so this arc is really contained in a single wall of some component of $\guts'$. If $\gamma_0=\gamma$, then by the correspondence in \cref{lem:lambdaprime} we have successfully compressed $\gamma$ into a leaf and we're done. Otherwise, using the correspondence in \cref{lem:lambdaprime}, we can compress $\gamma_0$ into $\guts$ which reduces $N_{\xtan}$ by 1. If $\gamma_0$ shares an endpoint with $\gamma$, then $N_{\xess}+N_{\xiness}$ remains constant. See \cref{subfig:tighten2p}. Otherwise, $N_{\xess}+N_{\xiness}$ decreases by one (although each could individually increase). Therefore, the complexity function decreases. See \cref{subfig:tighten2} for an example in which $N_{\xtan}$ decreases from 1 to 0 and $N_{\xess}+N_{\xiness}$ decreases from 2 to 1.
			\item $\gamma_0'$ approaches $\guts'$ during tightening. Then there is a subarc of $\gamma_0'$ which wraps around $\bd \guts'$, making contact along a subarc of Euclidean angle $>\pi$. Walking around the singularity, one meets a prong every $\pi$ radians. Thus, the arc will cross at least two prongs of the singularity.\plabel{fix2:subarc} This arc must have a tangency to $\Lambda'$. Push $\pi$ of the corresponding subarc of $\gamma_0$ into $\guts$ so as to decrease $N_{\xtan}$ by exactly one. See \cref{subfig:tighten3}. We claim that the new arc created is essential so that $N_{\xess}$ increases by one. This fact crucially uses the assumption that the saddle region is an ideal $n$-gon bundle for some $n\geq 4$. A pair of walls of such a saddle region which are separated by two prongs must have distinct lifts in the universal cover of the solid torus saddle region. Therefore, the endpoints of the newly created arc in $\guts$ lie in different walls in this $\Z$-cover and so the new arc is essential. In total, the complexity function has decreased.
		\end{enumerate}
		\begin{figure}[ht]
			\begin{subfigure}{0.35\textwidth}
				\centering
				\def\svgwidth{0.85\linewidth}
				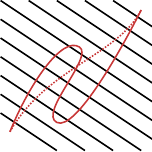
				\caption{}\label{subfig:tighten1}
			\end{subfigure}
			\begin{subfigure}{0.35\textwidth}
				\centering
				\def\svgwidth{0.85\linewidth}
				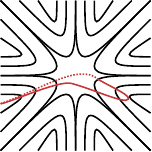
				\caption{}\label{subfig:tighten2p}
			\end{subfigure}
			\begin{subfigure}{0.35\textwidth}
				\centering
				\def\svgwidth{0.85\linewidth}
				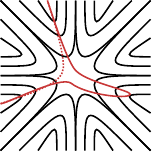
				\caption{}\label{subfig:tighten2}
			\end{subfigure}
			\begin{subfigure}{0.35\textwidth}
				\centering
				\def\svgwidth{0.85\linewidth}
				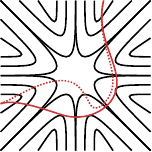
				\caption{}\label{subfig:tighten3}
			\end{subfigure}
			\caption{Various tightening moves}
		\end{figure}

		It follows that the minimizer $\gamma$ has $N_{\xtan}=N_{\xiness}=0$, so every tangency between $\gamma$ and $\F$ occurs inside an essential arc in $\guts$. Finally, we pull each component of $\gamma \cap \guts$ tight so as to contain either 0 or 1 tangencies with $\F$. The arc $\gamma$ is now efficient. \qedhere

	\end{proof}

	\begin{rmk}\label{rmk:anosovbranching}
		Although \cref{cnstr:foln} works more generally for manifolds with a pseudo-Anosov flow having orientable invariant foliations, \cref{prop:nobranching} does not hold. For example, there are many manifolds supporting Anosov flows whose invariant foliations have branching \cite{fenley_structure_1998}. The existence of a Solv metric on $\M\setminus \guts$ and the presence of ``negative curvature'' in $\guts$ from \cref{ass:hyperbolic} are crucial.
	\end{rmk}

	\begin{prop}
		Assume \cref{ass:hyperbolic}. The stabilizers of $B_i^+$ and $B_i^-$ in $\pi_1(\MM)$ are both equal to the infinite cyclic group $\Stab(B_i^\pm)$ generated by a conjugate to the Dehn filling core $\corei$. $\Stab(B_i^\pm)$ preserves a circular order on the points of $B_i^+$.
	\end{prop}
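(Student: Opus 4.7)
My plan is to identify $\Stab(B_i^+)$ with the stabilizer in $\pi_1(\MM)$ of a specific lift of the filled saddle region. Let $V\subset\widetilde{\MM}$ be the component of the preimage of the filled solid torus $V_i$ at $K_i$ whose positive walls comprise the points of $B_i^+$; such a $V$ exists by the coherence clause in \cref{prop:nobranching}. The goal is to show that $\Stab(B_i^+)=\Stab(V)$, and then to identify $\Stab(V)$ with a conjugate of $\langle\corei\rangle$.

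First I would check that $V\cong D^2\times\R$ is the universal cover of $V_i$. The curve $\corei$ is positively transverse to $\F$, so any complete lift of it to $\widetilde\MM$ is a properly embedded line crossing infinitely many leaves of $\hatF$; this forces $\corei$ to have infinite order in $\pi_1(\MM)$, and hence $\pi_1(V_i)=\langle\corei\rangle\hookrightarrow\pi_1(\MM)$ is injective. Consequently $\Stab(V)$ is generated by a conjugate of $\corei$, acting on $V$ by translation in the $\R$-factor.

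The inclusion $\Stab(V)\subseteq\Stab(B_i^+)$ is then straightforward: each positive wall of $V_i$ is an annulus whose core is freely homotopic to $\corei$ in $V_i$, so $\pi_1(W)\to\pi_1(V_i)$ is an isomorphism and the wall lifts to a single strip $\alpha_j^+\times\R\subset\bd V$, fixed setwise by the translation action. For the reverse inclusion, suppose $g\in\pi_1(\MM)$ stabilizes $B_i^+$. I plan to argue via the monkey-saddle leaves in $V$: there is an $\R$-family of plane leaves in $V$ accumulating on all of $B_i^+$ from the positive side, and by the enumeration of branch loci in \cref{prop:nobranching} this is the unique $\R$-family with this accumulation behavior. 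Since $g$ preserves $B_i^+$ it must preserve this family setwise; picking one such saddle leaf $\widetilde\mu\subset V$, we get $g(\widetilde\mu)\subset V\cap g(V)$, forcing $g(V)=V$ because $V$ and $g(V)$ are either equal or disjoint as components of the preimage of $V_i$.

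Finally, the $p_i$ wall strips on $\bd V=\bd D^2\times\R$ project to cyclically ordered arcs on the oriented circle $\bd D^2$, inducing a cyclic order on $B_i^+$. Any deck transformation of $V\to V_i$ is orientation-preserving and preserves the cyclic order on $\bd D^2$, and so preserves the order on $B_i^+$. I expect the most delicate step to be the uniqueness of the $\R$-family of accumulating saddles in the previous paragraph, which rules out the possibility that a saddle family from a different lift could accumulate on the same $B_i^+$; this should follow from the structure of branch loci established in \cref{prop:nobranching}.
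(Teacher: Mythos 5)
Your proposal is correct and follows essentially the same route as the paper: identify $\Stab(B_i^\pm)$ with the stabilizer of the lift of the filled solid torus and then recognize that stabilizer as the image of $\pi_1$ of the solid torus. The paper compresses the step $\Stab(V)=\Stab(B_i^+)$ into a single ``Observe that'' sentence, whereas you justify both inclusions (the reverse one via the uniqueness of the germ of the accumulating $\R$-family of saddle leaves at $B_i^+$, which is really a general fact about non-Hausdorff $1$-manifolds rather than something specific to \cref{prop:nobranching}); and conversely the paper spells out the freeness of the action on the $\R$-family of plane leaves, which you package as ``$V$ is the universal cover of $V_i$.''
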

	\begin{proof}
		Let $R$ be the relevant saddle region. Observe that $\Stab(R)=\Stab(B_i^+)=\Stab(B_i^-)$. Every leaf in $R$ is homeomorphic to a plane, and so has trivial stabilizer. Therefore, there is at most one element of $\Stab(R)$ mapping between lifts $\lambda_1,\lambda_2 \in \hatF$ of a leaf $\lambda \in R$. There is always a power of $K_i(\textbf{p};\textbf{q})$ which accomplishes this transformation, so $\Stab(R)$ consists only of such elements. The circular order on walls is the order in which they appear as sides of a fiber of the ideal polygon bundle structure on $R$.
	\end{proof}

	Let us now recall a standard fact about periodic orbits of $\phi$.
	\begin{lem}\label{lem:orbitsandcylinders}
		The non-planar leaves of $\Lambda$ are all cylindrical. Moreover, they are in correspondence with primitive periodic orbits of $\phi$. This correspondence is $k$-to-one for the leaves intersecting $\bd \guts$, where $k$ is the number of walls of the incident component of $\guts$. It is one-to-one for all other leaves.
	\end{lem}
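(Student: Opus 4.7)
The overall plan is to write each leaf of $\Lambda$ as a suspension of a $\phi$-orbit of leaves of $\Lambda|_\Sigma$ and then classify the suspensions. Every leaf of $\Lambda|_\Sigma$ is a line---either a regular leaf of $\F^s|_\Sigma$, or an edge of the ideal polygon produced by splitting the prongs at a singularity of $\phi$. Given such a line $\ell$, if $T$ is the minimal positive integer with $\phi^T(\ell) = \ell$ (when one exists), then the corresponding leaf $L$ of $\Lambda$ is the mapping torus of $\phi^T|_\ell$; since $\phi^T|_\ell$ is a contraction of $\ell$ by $\lambda^{-T}$ (where $\lambda$ is the pseudo-Anosov dilatation), this mapping torus is an orientable cylinder. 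When no such $T$ exists, $L$ is the image of $\ell \times \R$ in $\M$, which is a plane. Hence non-planar leaves of $\Lambda$ are exactly the cylindrical ones, parameterized by finite $\phi$-orbits of leaves of $\Lambda|_\Sigma$.

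I would then produce the map from non-planar leaves to primitive periodic orbits of $\phi$ by sending a cylindrical leaf $L$ to the primitive $\phi$-orbit through the unique $\phi^T$-fixed point of the contraction $\phi^T|_\ell$. For a regular $\ell$, this fixed point is a periodic point of primitive period $T$ (any $T'\mid T$ with $\phi^{T'}(p)=p$ forces $\phi^{T'}(\ell)=\ell$, since the stable leaf through $p$ is unique), and the resulting map is a bijection with primitive periodic orbits of $\phi$ through regular points of $\Sigma$. Such regular cylindrical leaves lie in the complement of all components of $\guts$, so they do not meet any $\partial\guts$, yielding the one-to-one portion of the lemma.

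The leaves intersecting some component of $\partial\guts$ come instead from edges of the ideal polygon at the singularities in $K_i\cap\Sigma$, and they all map to the singular primitive orbit $K_i$. The $k_i$ polygon edges at each singularity are cyclically permuted by $\phi^{m_i}$ via rotation by $\omega_i$ positions, partitioning into some number of edge orbits; each orbit suspends to exactly one cylindrical leaf of $\Lambda$ adjacent to the component of $\guts$ incident to $K_i$. To finish, I would match this orbit count with the wall count $k$ by establishing a canonical bijection between cylindrical leaves adjacent to this gut component and its walls: in the polygon-bundle parameterization of the complement of $\Lambda$ over $K_i$, the sutures on the gut boundary are precisely the closed curves traced by $\phi^{m_i}$-orbits of polygon vertices, the walls are the annular regions between consecutive sutures, and each adjacent cylindrical leaf is the suspension of the edge orbit sweeping out the corresponding wall annulus. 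This leaf-wall bijection---the main bookkeeping step, since a priori a single cylindrical leaf might have touched $\partial\guts$ along several annuli---yields the $k$-to-one correspondence stated in the lemma.
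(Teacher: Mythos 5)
Your proof is correct and reaches the same conclusion by a closely related route, but the decomposition you use differs from the paper's in a way worth noting. The paper first establishes the cylinder--orbit correspondence for the unsplit foliation $\F^s$: there, the leaves of $\F^s\cap\Sigma$ through a singularity are prongs $[0,\infty)$, so the fixed point of the contraction sits honestly at $0$ (the singularity itself), making it immediate that the corresponding prong cylinders pair off with $K_i$. Only afterwards does the paper track what the ``double and reglue'' operation does to these prong cylinders. You instead work directly in the split lamination $\Lambda|_\Sigma$, whose boundary leaves are already bi-infinite lines. This is a clean way to organize things, but it introduces a technical wrinkle your description glosses over: for an edge leaf $\ell$ of the ideal $k_i$-gon, both ends of $\ell$ run into the deleted singularity, so the fixed point of the contraction $\phi^{T}|_\ell$ is not actually in $\ell$; it lives at the puncture in the metric completion. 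Your phrase ``the unique $\phi^T$-fixed point of the contraction $\phi^T|_\ell$'' is therefore only literally correct for the regular cylinders. You do treat the boundary case separately in your last paragraph, so the argument goes through, but you should say explicitly that for the edge leaves you are assigning $K_i$ directly rather than applying the fixed-point prescription. On the other hand, your final bookkeeping step---matching the $\phi^{m_i}$-orbits of polygon edges to the wall annuli of the incident gut component and confirming the $k$-to-one count---is spelled out in more detail than the paper's proof, which simply asserts that ``the conclusion of the lemma follows''; the paper's terse final step is where the real content of the claim sits, and your bijection is a reasonable way to justify it.
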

	\begin{proof}
		In what follows, we regard the various prongs of a singular leaf of $\F^s$ or $\F^s\cap \Sigma$ as separate leaves.\plabel{fix2:4} Since $\phi$ preserves no closed curve, the intersection of any leaf in $\F^s$ with $\Sigma$ is either a prong (homeomorphic to $[0,\infty)$) or a copy of $\R$. Leaves of $\F^s$ with nontrivial topology are suspensions of those leaves of $\F^s\cap \Sigma$ which are preserved by some power of $\phi$, and hence are homeomorphic to cylinders. Since $\phi$ is pseudo-Anosov, its action contracts lengths in each leaf of $\F^s\cap \Sigma$. A contraction on $[0,\infty)$ or $\R$ has a unique fixed point, so each cylindrical leaf of $\F^s$ contains exactly one primitive orbit of the suspension flow of $\phi$.

		Conversely, the suspension of a primitive periodic orbit of $\phi$ is either contained in a unique leaf of $\F^s$ or is a singular orbit contained in two or more prongs.

		We obtain $\Lambda$ from $\F^s$ by splitting open the leaves of $\F^s$ which contain orbits in $\set{K_i}$. This has the effect of doubling such leaves and then gluing any leaves of the form $[0,\infty)\times S^1$ in pairs; the conclusion of the lemma follows for the leaves intersecting $\bd \guts$. All other leaves are preserved in the passage from $\F^s$ to $\Lambda$.
	\end{proof}

	\begin{prop}\label{prop:nofixedpoints}
			Assume \cref{ass:hyperbolic}. No element of $\Stab(B_i^\pm)$ stabilizes any point in the Hausdorffification of $L$ aside from $B_i^+$ and $B_i^-$.
	\end{prop}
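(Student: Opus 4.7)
The plan is to enumerate the possible fixed points in $L_{\mathrm{Haus}}$ (the Hausdorffification of $L$) of a nontrivial element $g = \corei^n$ and rule out everything except $B_i^+$ and $B_i^-$. By \cref{prop:nobranching}, each point of $L_{\mathrm{Haus}}$ either corresponds to a single leaf of $\hatF$ not lying in any branch locus, or to a $\pi_1(\MM)$-translate of some branch locus $B_j^{\pm}$. I would treat the two cases in turn.

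In the single-leaf case, $g$ must lie in $\Stab(\widetilde\lambda)$, which is conjugate in $\pi_1(\MM)$ to the image of $\pi_1$ of the projected leaf $\lambda \subset \MM$. Planar leaves, including both the planar leaves of $\F$ outside $\guts$ and the saddle-like leaves filling each saddle region, have trivial stabilizer and so cannot be fixed by $g \neq 1$. The annular leaves of $\F$ are either walls, which lie inside some branch locus $B_j^\pm$ and so are handled in the branch-locus case below, or non-wall cylinders, which by \cref{lem:orbitsandcylinders} correspond to primitive periodic orbits $K$ of $\phi$ with $K \notin \{K_j\}$. In that case $\Stab(\widetilde\lambda) \cong \langle K \rangle$, and $g \in \Stab(\widetilde\lambda)$ forces $\corei^n$ to be conjugate in $\pi_1(\MM)$ to $K^m$ for some nonzero integer $m$.

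In the branch-locus case, the fixed point corresponds to a translate $h \cdot B_j^{\pm}$ with stabilizer $h\,\langle K_j(\textbf{p};\textbf{q}) \rangle\, h^{-1}$, so $\corei^n$ is conjugate to $K_j(\textbf{p};\textbf{q})^m$ for some nonzero $m$. If $j = i$ and $h \in \langle \corei \rangle$, then $h \cdot B_i^\pm = B_i^\pm$ and we recover the allowed fixed points. The remaining possibilities are $j \neq i$, or $j = i$ with $h$ outside the centralizer of $\corei$.

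The heart of the proof, and what I expect to be the main obstacle, is the purely group-theoretic claim that these remaining conjugacies do not occur: no nontrivial power of $\corei$ is conjugate in $\pi_1(\MM)$ to a power of a different primitive closed orbit of the suspension flow of $\phi$ (whether a surviving orbit $K \notin \{K_j\}$ or a filling core $K_j(\textbf{p};\textbf{q})$ with $j \neq i$), and the centralizer of $\corei$ in $\pi_1(\MM)$ equals $\langle \corei \rangle$. I would establish this by viewing $\corei$ together with the primitive periodic orbits of $\phi$ outside $\{K_j\}$ as the primitive closed orbits of a pseudo-Anosov flow on $\MM$ that generalizes Fried's Anosov surgery picture to the multi-pronged setting allowed by \cref{ass:hyperbolic}, and then invoking the standard fact that distinct primitive closed orbits of such a flow represent distinct conjugacy classes with cyclic centralizers. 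Alternatively, one could argue directly by tightening a hypothetical conjugating annulus against the Solv structure on $\Lambda'$ in the spirit of \cref{prop:nobranching}, so that its combinatorics of intersection with $\guts$ visibly distinguish $\corei$ from every other candidate.
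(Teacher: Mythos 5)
Your reduction matches the paper's: both arguments boil down to the group-theoretic claim that no nontrivial free homotopy (equivalently, conjugacy) holds among powers of the filling cores $\corei$ and powers of the surviving periodic orbits of $\phi$, with the cyclicity of centralizers supplying the last step for the self-conjugacy case. Where you diverge from the paper is in the proof of that claim. The paper realizes a hypothetical free homotopy by an immersed annulus $A$, applies Roussarie--Thurston to put $\F|_A$ in singularity-free general position, and cuts $A$ along its intersection with $\bd\guts$; an Euler characteristic count on the sub-surfaces (using that each $\guts$ component has at least eight sutured corners under \cref{ass:hyperbolic}) kills inessential intersection curves and forces every remaining piece to lie entirely in $\MM\setminus\guts$ or entirely in a single solid-torus component of $\guts$, after which the pseudo-Anosov structure on the fibered part finishes the argument. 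Your alternative (b), tightening the annulus against the Solv metric on $\Lambda'$, is really the technique of \cref{prop:nobranching} for arcs and would need to be re-engineered for closed annuli (in particular, keeping control of how the annulus wraps in $\guts$), so the Roussarie--Thurston route is cleaner here. Your alternative (a) --- blowing down $\guts$ to obtain a genuine pseudo-Anosov flow on $\MM$ (a multi-pronged generalization of Fried surgery) and citing the standard facts that primitive closed orbits of such a flow lie in distinct conjugacy classes with cyclic centralizers --- would also work and is more conceptually efficient, but it shifts the burden to citing or reproving those orbit-rigidity facts, which the paper instead establishes directly. The subtle centralizer point you flag is real: the paper's conclusion that $\gamma$ and $\eta$ ``represent the same element of $\mathcal O$'' implicitly relies on the centralizer of a filling core being the expected cyclic group in order to conclude $x = B_i^{\pm}$, and you are right that this deserves a sentence.
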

	\begin{proof}
		$\Stab(B_i^\pm)$ is generated by a loop freely homotopic to the Dehn surgery core $\corei$. The stabilizer of a cylindrical leaf in $\Lambda$ is represented by a loop freely homotopic to the suspension of a periodic point of $\phi$. We must show that there are no nontrivial free homotopies among loops $\corei$ (or powers thereof) or suspensions periodic orbits of $\phi$ (or powers thereof). Call the set of these curves $\mathcal O$. For simplicity, we replace each $\corei$ with a power thereof that is homotopic to one living inside a wall of $\bd \guts$. \plabel{fix:13}

		First, we rule out a nontrivial homotopy between two curves in $\mathcal O$ that stays inside $\MM \setminus \guts$. Now $\MM \setminus \guts$ embeds in the fibered manifold $\M$ so that the elements of $\mathcal O$ are suspensions of periodic orbits of $\phi$. Since $\phi$ is pseudo-Anosov, these suspensions are never homotopic to one another. 

		Now we must rule out a nontrivial homotopy between curves in $\mathcal O$ that might pass through the guts $\guts$. The idea is that the saddle regions contains lots of negative curvature, but an annulus giving rise to the purported homotopy has Euler characteristic zero and therefore can't \plabel{fix:14}cut across any saddle region. To be more precise, let $\gamma$ and $\eta$ be two elements of $\mathcal O$. Suppose $\gamma$ is homotopic to $\eta$ in $\MM$. Replacing $\gamma$ and $\eta$ by $\gamma^k$ and $\eta^k$ if necessary, we can assume that $\gamma$ and $\eta$ both lie in leaves of $\F$. Let $A=[0,1]\times S^1$ and choose an immersion $\iota:A \to \MM$ realizing a homotopy from $\gamma$ to $\eta$. \plabel{fix2:9:1} By Roussarie-Thurston, we can homotope the annulus so that the induced foliation has no critical points \cite{roussarie_plongements_1974, thurston_norm_1986}. We can also make $A$ transverse to $\bd \guts$ and the corners of $\bd \guts$.\plabel{fix2:9:2} Let $C=\iota^*(\bd \guts)$; by our transversality conditions, $C$ is a collection of disjoint closed polygonal curves in $A$. Choose $\iota$ to minimize number of components of $C$.

		\begin{enumerate}[label=\textbf{Case~\arabic*}]
			\item $C$ contains an innermost loop $\alpha$ that is inessential in $A$. Let $D$ be the disk in $A$ bounded by $\alpha$. The boundary of $D$ is a $2n$-gon with sides alternating between arcs transverse to the foliation and arcs tangent to the foliation. Here, $2n$ is the number of corners of $\partial G$ that $\partial D$ intersects. Moreover, the angles between adjacent arcs are all convex or all concave, depending on whether $\interior D$ lies inside or outside $\guts$.\plabel{fix2:9:3} See \cref{subfig:case1} for a picture of the convex case with $n=4$. The foliation on $D$ is oriented and has no critical points. An application of the Poincare-Hopf index theorem shows that $n=2\chi(D)=2$ in the convex case, or $n=-2\chi(D)=-2$ in the concave case which is impossible. Since every saddle region is an ideal polygon bundle with fiber having at least four sides, a meridian in $\bd \guts$ crosses at least eight corners. Therefore, $\alpha$ is not a meridian or a multiple of a meridian. Furthermore, $\alpha$ cannot be homotopic to a multiple of a core of $\guts$ since $\alpha$ is zero in $\pi_1(\MM)$. Therefore, $\alpha$ must be inessential in $\bd \guts$. We can then compress $D$ into $\bd \guts$ (while maintaining Roussarie-Thurston general position) and eliminate the curve $\alpha$ from $C$, contradicting our minimality assumption. 

				\begin{figure}[!htb]
					\begin{subfigure}{0.49\textwidth}
						\centering
						\def\svgwidth{0.99\linewidth}
\begingroup%
  \makeatletter%
  \providecommand\color[2][]{%
    \errmessage{(Inkscape) Color is used for the text in Inkscape, but the package 'color.sty' is not loaded}%
    \renewcommand\color[2][]{}%
  }%
  \providecommand\transparent[1]{%
    \errmessage{(Inkscape) Transparency is used (non-zero) for the text in Inkscape, but the package 'transparent.sty' is not loaded}%
    \renewcommand\transparent[1]{}%
  }%
  \providecommand\rotatebox[2]{#2}%
  \newcommand*\fsize{\dimexpr\f@size pt\relax}%
  \newcommand*\lineheight[1]{\fontsize{\fsize}{#1\fsize}\selectfont}%
  \ifx\svgwidth\undefined%
    \setlength{\unitlength}{181.41732283bp}%
    \ifx\svgscale\undefined%
      \relax%
    \else%
      \setlength{\unitlength}{\unitlength * \real{\svgscale}}%
    \fi%
  \else%
    \setlength{\unitlength}{\svgwidth}%
  \fi%
  \global\let\svgwidth\undefined%
  \global\let\svgscale\undefined%
  \makeatother%
  \begin{picture}(1,0.55031508)%
    \lineheight{1}%
    \setlength\tabcolsep{0pt}%
    \put(0,0){\includegraphics[width=\unitlength,page=1]{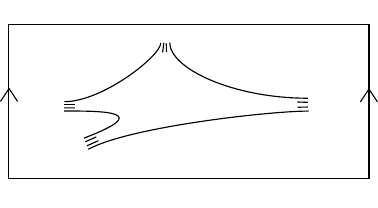}}%
    \put(0.43895396,0.51251593){\color[rgb]{0,0,0}\makebox(0,0)[lt]{\lineheight{1.25}\smash{\begin{tabular}[t]{l}$\gamma$\end{tabular}}}}%
    \put(0.45938098,0.02544799){\color[rgb]{0,0,0}\makebox(0,0)[lt]{\lineheight{1.25}\smash{\begin{tabular}[t]{l}$\eta$\end{tabular}}}}%
    \put(0.59208532,0.34153178){\color[rgb]{0,0,0}\makebox(0,0)[lt]{\lineheight{1.25}\smash{\begin{tabular}[t]{l}$\alpha$\end{tabular}}}}%
  \end{picture}%
\endgroup%

						\caption{}\label{subfig:case1}
					\end{subfigure}
					\begin{subfigure}{0.49\textwidth}
						\centering
						\def\svgwidth{0.99\linewidth}
\begingroup%
  \makeatletter%
  \providecommand\color[2][]{%
    \errmessage{(Inkscape) Color is used for the text in Inkscape, but the package 'color.sty' is not loaded}%
    \renewcommand\color[2][]{}%
  }%
  \providecommand\transparent[1]{%
    \errmessage{(Inkscape) Transparency is used (non-zero) for the text in Inkscape, but the package 'transparent.sty' is not loaded}%
    \renewcommand\transparent[1]{}%
  }%
  \providecommand\rotatebox[2]{#2}%
  \newcommand*\fsize{\dimexpr\f@size pt\relax}%
  \newcommand*\lineheight[1]{\fontsize{\fsize}{#1\fsize}\selectfont}%
  \ifx\svgwidth\undefined%
    \setlength{\unitlength}{181.41732283bp}%
    \ifx\svgscale\undefined%
      \relax%
    \else%
      \setlength{\unitlength}{\unitlength * \real{\svgscale}}%
    \fi%
  \else%
    \setlength{\unitlength}{\svgwidth}%
  \fi%
  \global\let\svgwidth\undefined%
  \global\let\svgscale\undefined%
  \makeatother%
  \begin{picture}(1,0.51836252)%
    \lineheight{1}%
    \setlength\tabcolsep{0pt}%
    \put(0,0){\includegraphics[width=\unitlength,page=1]{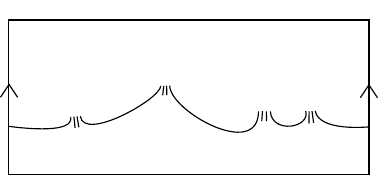}}%
    \put(0.52059131,0.489916){\makebox(0,0)[t]{\lineheight{1.25}\smash{\begin{tabular}[t]{c}$\gamma$\end{tabular}}}}%
    \put(0.53097714,-0.00234613){\makebox(0,0)[lt]{\lineheight{1.25}\smash{\begin{tabular}[t]{l}$\eta$\end{tabular}}}}%
    \put(0.52602707,0.21794847){\makebox(0,0)[lt]{\lineheight{1.25}\smash{\begin{tabular}[t]{l}$\alpha$\end{tabular}}}}%
  \end{picture}%
\endgroup%

						\caption{}\label{subfig:case2}
					\end{subfigure}
					\caption{The annulus $A$ is shown along with the induced foliation in a neighbourhood of $\alpha$ in two disallowed configurations. If $\alpha$ crosses the sutures in $\bd \guts$ as shown, there is no way to extend the foliation to the rest of $A$ without critical points, contradicting Roussarie-Thurston.}\label{fig:case2}
			\end{figure}

			\item $C$ contains only loops that are essential in $A$.\plabel{fix:16} Let $\alpha$ be any element of $C$. As in the first case, $\alpha \cap A$ alternates between arcs between transverse and tangent to $\F$, and the angles between adjacent arcs all either convex or concave depending on which side of $\alpha$ is on the interior of $\guts$. Let $n$ be the number of alternations between transverse and tangent arcs in $\alpha$. See \cref{subfig:case2} for the case $n=4$. Let $R$ be the annulus cobounded by $\gamma$ and $\alpha$. Since $\F\cap R$ has no singularities, we have $n=2\chi(R)=0$ and we must be in the convex case. Thus, $\alpha$ actually intersects no corners of $\bd \guts$ and is either entirely transverse to $\F$ (ie is contained in a suture of $\bd \guts$) or contained in a wall of $\guts$. Moreover, $\alpha$ is not contractible in $\partial G$ since $\alpha$ is homotopic in $\MM$ to $\gamma$ which is even non-contractible in $\MM$. Therefore, $\alpha$ is a closed curve parallel to the sutures in some component of $\bd \guts$.
			
			The arguments above hold for any $\alpha \in C$. The curves in $C$ cut $A$ into a collection of annuli which we label $A_1\dots A_M$. Each $A_k$ is either an annulus living in $\MM\setminus \guts$ or an annulus in $\guts$. In the first alternative, $A_k$ represents a free homotopy in $\MM\setminus\guts$; as noted at the beginning of the proof, the two boundary components of such an $A_k$ must represent the same element of $\mathcal O$. In the latter alternative, we can trivially make the same conclusion. Therefore, $\gamma$ and $\eta$ represent the same element of $\mathcal O$ as desired.\qedhere
		\end{enumerate}

	\end{proof}
\end{section}

\begin{section}{Gluing branches}\label{sec:gluing}
	In this section, we give some motivation for the constructions in \cref{sec:connection}. We wish to define on $L$ an equivalence relation, denoted $\fequiv$, such that $(L/ \fequiv) \cong \R$.

	\begin{subsection}{Hausdorff and non-Hausdorff quotients}	
	We first describe this gluing process in a simpler, abstract case where $L$ is replaced by an infinite binary tree $T$. Consider an infinite rooted binary tree $T$, each of whose vertices has a distinguished left and right child. We think of $T$ not combinatorially, but as a topological space. Let $v_0$ be the root vertex of $T$. Given a vertex $v\in T$, let $vL$ represent its left child and $vR$ represent its right child. We use the notation $\set{L,R}^k$ to denote the set of words of length $k$ in the two letter alphabet $\set{L,R}$\plabel{fix2:10}. We use fractional powers of $R$ to denote points that are on edges of the tree; for example, $vR^{\frac 1 2}$ is the midpoint of the path $[v,vR]$. Then any point on the tree is of the form $v_0\{L,R\}^k L^{\alpha}$ or $v_0\{L,R\}^k R^\alpha$ for some $k\geq 0$, $0\leq \alpha < 1$.

	In order to illustrate the kinds of difficulties we will encounter in the next section, we will show two superficially similar equivalence relations on $T$ such that the quotient is $[0,\infty)$ in the first but is non-Hausdorff in the latter. 

	\begin{enumerate}[label=\textbf{Example~\arabic*}]
		\item Let's define the first equivalence relation. At each vertex $v$, glue the infinite paths $[v,vLLL\dots)$ to $[v,vRRR\dots)$ together by the obvious map $[0,\infty) \to [0,\infty)$ which preserves depths of vertices. See \cref{fig:tree1}. This has the effect of collapsing the entire tree down to a copy of $[0,\infty)$. \plabel{fix:17}
	\begin{figure}[ht]
		\centering
		\def\svgwidth{0.8\linewidth}
		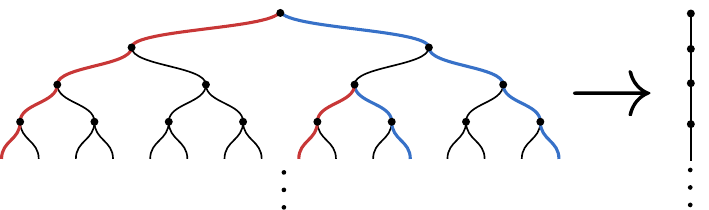

		\caption{The red paths are of the form $[v,vLLL\dots)$ and the blue paths are of the form $[v,vRRR\dots)$. Points which are equivalent with respect to the gluing are marked with the same number.}\label{fig:tree1}
	\end{figure}
	\item Let us construct a different equivalence relation such that the quotient is not Hausdorff. At each vertex $v\in T$ we glue the infinite descending paths $[v,vLRRR\dots)$ and $[v,vRLRRR\dots)$ by a homeomorphism which sends $[v,vL]$ to $[v,vRL]$ by a dilation by a factor of two, and sends $[vL,vLRRR\dots)$ to $[vRL,vRLRRR\dots)$ by an isometry. See \cref{fig:tree2}. The points $v_0R^nL$ are identified for all $n\geq 0$. Therefore, $[v_0,v_0RRR\dots)$ is not properly embedded in the quotient space. It turns out that the quotient space is not $[0,\infty)$ but a tree with infinite valence at each vertex. For example, the points $v_0LL$ and $v_0RLL$ are not comparable in the partial order induced on the quotient. For points $a,b\in T$, let $[a]$ and $[b]$ denote their images in the quotient. Write $a \geq b$ if $a$ is an ancestor of $b$ in $T$, and $[a]\geq[b]$ if $[a]$ is an ancestor of $[b]$ in the quotient. The interested reader is invited to show the following, which give a precise picture of the quotient of $T$ with respect to our second gluing.
	\begin{enumerate}\plabel{fix:12}
		\item Each equivalence class under the gluing has a canonical representative of the form $v_0\{L,R\}^k LLR^{s}$, $v_0LR^s$, or $v_0R^s$ for some $s\geq 0$.
		\item Suppose $a$ and $b$ are canonical representatives of their equivalence classes. Write $a=rWR^s$ for some word $W$ in $L$ and $R$, and $s\in \R$ maximal. Then $[a] \geq [b]$ if and only if one of the following holds:
		\begin{enumerate}
			\item $a\geq b$ 
			\item $W$ is the empty word and $v_0WR^nL\geq b$ for some integer $n\leq s$ 
			\item $W=W'L$ and $v_0W'R^kLL \geq b$ for some integer $k\geq 0$
		\end{enumerate}\plabel{fix2:12:1}
	\end{enumerate}
	\end{enumerate}	
	\plabel{fix2:11}
	\begin{figure}[ht]
		\centering
		\def\svgwidth{\linewidth}
		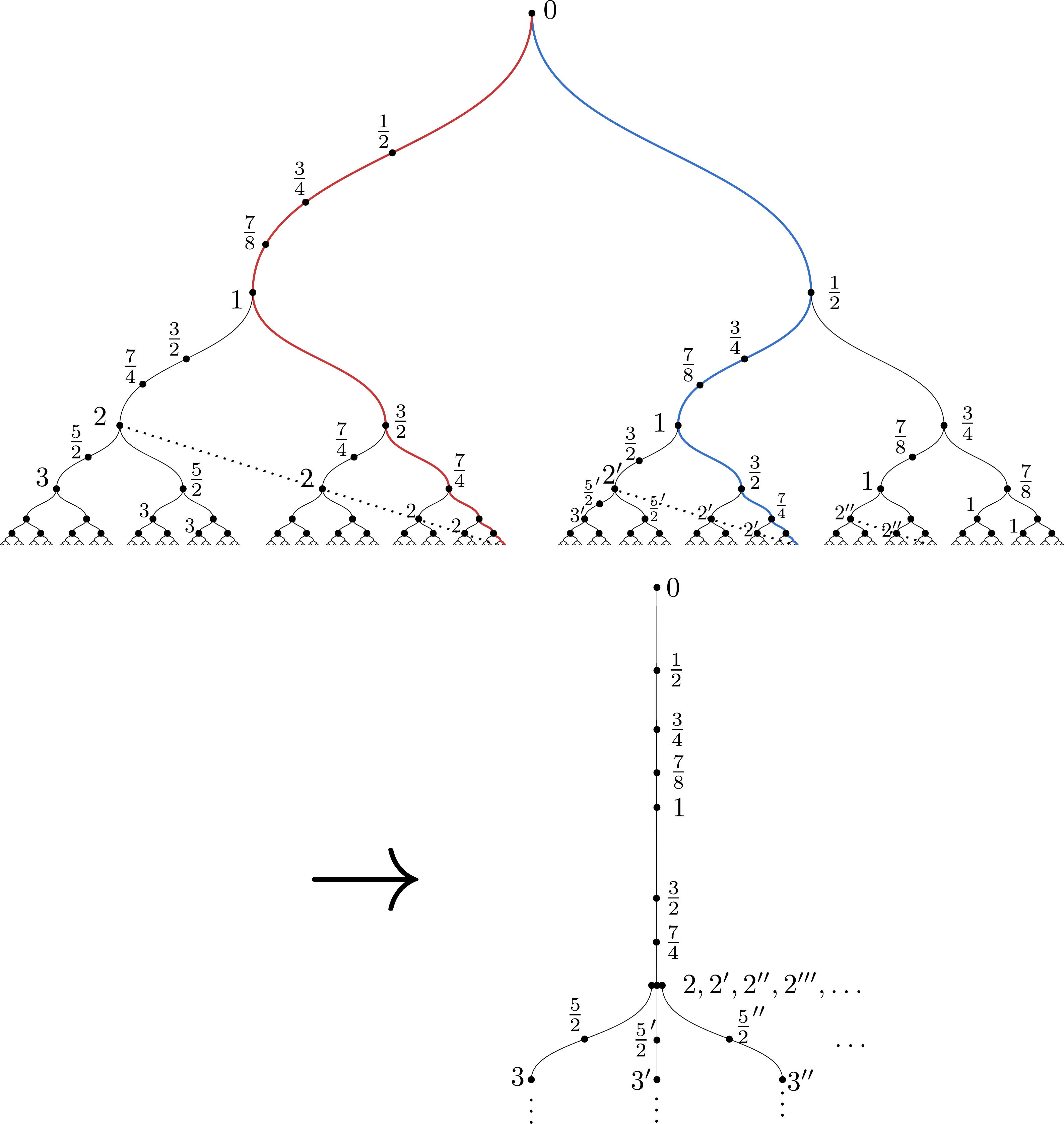
		\caption{Two paths that are to be glued together, $[v_0,v_0LRRR\dots)$ and $[v_0,v_0RLRRR\dots)$, are shown in red and blue respectively. Dotted lines are drawn through the equivalence classes $2$, $2'$, and $2''$. These classes map to non-separable points in the quotient. Their canonical representatives are $v_0LL$, $v_0RLL$, and $v_0RRLL$. For any $s\geq 0$, the point $v_0LR^s$ (marked with the label $2-\frac{1}{2^s}$) is a common ancestor of all three of these points in the quotient. Moreover, as $s\to \infty$, $v_0LR^s$ converges to all three of these points, so the space is not Hausdorff.}\label{fig:tree2}
	\end{figure}
	
	\end{subsection}
	\begin{subsection}{Gluing branches in $L$}
	\newcommand{\pathx}{[x,\infty)}
	\newcommand{\openpathx}{(x,\infty)}
	We wish to perform a similar gluing on the leaf space $L$. The leaf space should be thought of as a kind of tree, but possibly with a dense set of branching points.\plabel{fix:18} There are two new features in this case. First, there may be branching in both the upward and downward directions. Second, we must do this gluing in a $\pi_1$-equivariant way so that the action of $\pi_1$ will descend to the quotient $L/\fequiv$. The gluings we performed in the case of the binary tree are equivariant with respect to the semigroup of isometric self-maps of the binary tree which preserve the left and right children at each node. More care will be required in $L$ since the stabilizer of a branch point may be nontrivial.
	
	Let $b_i$ be a generator for $\Stab(B_i^\pm)$. Let $a_i$ be the smallest positive integer such that $b_i^{a_i}$ stabilizes $B_i$ pointwise. In other words, $a_i$ is the integer such that $b_i^{a_i}$ is freely homotopic to the essential loop in a wall of $\guts$. We fix the orientation of $b_i$ by asking that the pushoff of $b_i^{a_i}$ to the outside of $\guts$ has negative intersection number with $\Sigma$. With this choice, the holonomy of $\F$ on the outside of $\guts$ along a curve freely homotopic to $b_i^{a_i}$ is always attracting.\plabel{fix2:14} This implies that the action of ${b_i}^{a_i}$ on the leaf space is locally expanding. \plabel{fix:20}

	For each $x\in B_i^+$ (ie a lift of a wall of a saddle region), let $\openpathx \subset L$ be the lift to $L$ of the transverse loop obtained by pushing the essential loop in $x$ slightly outside the saddle region corresponding to $B_i^+$. Another way to think of $\openpathx$ is as the path in $L$ correponding with the set of leaves of $\hatF$ intersecting a prong of the unstable invariant foliation $\F^u$. Let $\pathx =  \set{x} \cup \openpathx$. The paths $\set{\pathx \mid x\in B_i^+}$ will play the role of the paths $[v,vRRR\dots)$ or $[v,vLLL\dots)$ in the case of the binary tree.
		
	\begin{lem}\label{lem:fixedpath}
		$\pathx$ is a $b_i^{a_i}$ invariant path from $x$ to $\infty$ in $L$. Furthermore, $b^{a_i}$ acts on $\pathx$ by an expanding dilation fixing $x$.
	\end{lem}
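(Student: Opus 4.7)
The plan is to realize $\pathx$ as the image in $L$ of an equivariant lift of $\tau$, and then read off both the topology and the $b_i^{a_i}$-action directly from that lift. Since $\tau$ is a pushoff of the essential loop of the projected wall, it is freely homotopic to that essential loop and hence represents $b_i^{a_i}$ in $\pi_1(\MM)$. Choose a basepoint lift so that $\tau$ lifts to an equivariant map $\tilde\tau \colon \R \to \widetilde{\MM}$ satisfying $b_i^{a_i}\cdot \tilde\tau(t) = \tilde\tau(t+1)$, and set $\ell(t) := \pi(\tilde\tau(t)) \in L$, where $\pi$ is the quotient onto the leaf space.

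The first task is to check that $\ell$ is strictly monotone, hence injective. Transversality of $\tau$ to $\F$ lifts to transversality of $\tilde\tau$ to $\hatF$; combined with the convention that the pushoff of $b_i^{a_i}$ has positive intersection with $\Sigma$, this says $\tilde\tau$ crosses leaves monotonically in the positive direction. The second and main task is to show $\ell(t) \to x$ as $t \to -\infty$. If the pushoff is chosen close to the essential loop of the wall, then $\ell(0)$ lies in an arbitrarily small neighborhood of $x$ in $L$. The equivariance gives $\ell(-n) = (b_i^{-a_i})^n \cdot \ell(0)$, and the orientation convention on $b_i$ ensures that the holonomy of $\F$ along $b_i^{a_i}$ outside $\guts$ is repelling at $x$. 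Dually, $b_i^{-a_i}$ is contracting on a neighborhood of $x$ in $L$, so the sequence $(b_i^{-a_i})^n \cdot \ell(0)$ converges to $x$, and monotonicity extends the convergence to non-integer $t$ by sandwiching $\ell([-n,-n+1])$ between $\ell(-n)$ and $\ell(-n+1)$. Meanwhile, the same expansion drives $\ell(n)$ to infinity in $L$ as $n \to \infty$. Together these identify $\pathx = \set{x} \cup \ell(\R)$ as a properly embedded monotone path from $x$ to $\infty$.

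With $\ell$ in hand, the rest is formal. The equivariance $\ell(t+1) = b_i^{a_i} \cdot \ell(t)$ shows that $b_i^{a_i}$ preserves $\ell(\R) = \openpathx$, and extending by the endpoint $\ell(-\infty) = x$ (which is fixed by $b_i^{a_i}$ because $x \in B_i^+$) gives invariance of all of $\pathx$. Under the homeomorphism $\ell$, the action is conjugate to translation by $+1$ on $\R$: there are no fixed points on $\openpathx$, and every interior point is carried to one further from $x$. Extending by the fixed endpoint gives an expanding dilation of $\pathx$ with unique fixed point $x$, as required. The most delicate step in this plan is the convergence $\ell(t) \to x$ as $t \to -\infty$, which is precisely why the hypothesis (the orientation of $b_i$ giving repelling holonomy) enters the argument; without it, the monotone ray could a priori limit onto some non-separated partner of $x$ or escape to an entirely different end of $L$.
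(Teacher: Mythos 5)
Your overall skeleton is the same as the paper's: lift the pushed-off transversal equivariantly, use transversality to get monotonicity, and use the repelling holonomy near $x$ to get the contraction on one end and the dilation structure. But there is a genuine gap at the step where you assert that ``the same expansion drives $\ell(n)$ to infinity in $L$ as $n\to\infty$.'' Repelling holonomy at $x$ is a purely local statement; it tells you the orbit escapes every small neighbourhood of $x$, but it does not tell you the orbit escapes to infinity in $L$. A priori the increasing ray $\ell([0,\infty))$ could accumulate on some leaf $y$ far from $x$, and then $y$ would be a fixed point of $b_i^{a_i}$ acting on (the Hausdorffification of) $L$. Nothing in your argument excludes such a $y$. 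You even flag the ``escape to an entirely different end of $L$'' worry at the end, but you attach it to the $t\to-\infty$ side, where the local repelling holonomy (plus choosing the pushoff small) does close the issue; the worry you should be addressing is at $t\to+\infty$, and local holonomy cannot close it.

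The paper fills exactly this hole with \cref{prop:nofixedpoints}: if $\pathx$ were not properly embedded then its accumulation point would be stabilized by $b_i^{a_i}$, and \cref{prop:nofixedpoints} says the only points stabilized by elements of $\Stab(B_i^\pm)$ are $B_i^+$ and $B_i^-$ themselves; one then observes the outward-pointing ray cannot circle back onto a wall of the same saddle region. That proposition is the global input your proof is missing. Once proper embeddedness is in hand, the rest of your argument (conjugate to a translation on $\openpathx$, extend by the fixed endpoint to get an expanding dilation of $\pathx$) is fine and essentially matches the paper. So the fix is small but necessary: invoke \cref{prop:nofixedpoints} (or prove an equivalent global no-fixed-point statement) to conclude $\ell(t)\to\infty$ as $t\to+\infty$, rather than inferring it from the local expansion at $x$.
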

	\begin{proof}
		Since $\openpathx$ is a lift of a loop homotopic to $a_i$ times the core of the corresponding saddle region, $\pathx$ is $b_i^{a_i}$ invariant. Suppose $\pathx$ had a greatest lower bound $y$. Then either $y$ and $b^{a_i}y$ are distinct non-separated leaves or $y=b^{a_i}y$.\plabel{fix2:13} By \cref{prop:nofixedpoints}, this never happens. Therefore, $\pathx$ is properly embedded in $L$. Since the holonomy around the essential loop in $x$ is repelling on the outside of $\guts$ and $b_i^{a_i}$ has no fixed points in $\pathx$ besides $x$, it must be that $b_i^{a_i}$ acts by a homeomorphism conjugate to a dilation with stretch factor $> 1$.
	\end{proof}
	
	Call $[x,\infty)$ the \emph{invariant path} at $x$. We are now ready to give a first attempt at defining the equivalence relation $\sim$ on $L$.

	\begin{cnstr}\label{cnstr:gluing}\plabel{fix:21}
		Recall that $B_i^+$ is an arbitrarily chosen representative among its translates by deck transformations.\plabel{fix2:15:1} For each point $x$ in the branch locus $B_i^+$ (resp. $B_i^-$), construct the upward (resp. downward) oriented path $\pathx\subset L$ from $x$ to $+\infty$ (resp. $-\infty$) which is invariant under $b_i^{a_i}$ as in \cref{lem:fixedpath}. Now $b_i$ acts on $\bigcup_{x\in B_i^+} \pathx$. We shall now glue together the various paths in the set $\set{\pathx\mid x\in B_i^+}$. Up to reparameterizing $\pathx$, we can assume that $b_i^{a_i}$ acts on each $\pathx$ by dilation by a factor $2^{a_i}$. For each $x\in B_i^+$, each $y\in \pathx$ and each $m\in \Z$, declare $y \fequiv \frac 1 {2^m} b_i^m y$. This has the effect of gluing together the paths $\set{\pathx \mid x\in O}$ for $O\subset B_i^+$ an orbit of the $\Stab(B_i^+)$ action on $B_i^+$. See \cref{fig:branches}. The action of $b_i$ descends to the quotient $L/\fequiv$. In the quotient, $b_i$ acts on $\pathx/\fequiv$ as a dilation by a factor of 2.
		
		We have now glued together $[x_i,\infty)$ and $[x_j\infty)$ whenever $x_i,x_j$ are points of $B_i^+$ lying in the same orbit of the $\Stab(B_i^+)$ action. However, the $\Stab(B_i^+)$ action on $B_i^+$ may not be transitive. For example, consider the case when $K_i$ is an untwisted orbit with $2k$ prongs. \xlabel{fix6:1}{In the nonsingular case $k=1$, we have $|B_i^+|=|B_i^-|=1$.} In the singular case $k>1$, the action of $b_i$ on $B_i^+$ is not transitive because it does not permute the $k$ points in $B_i^+$.\plabel{fix5:4} We also wish to additionally glue those paths when $x_i$ and $x_j$ are in different orbits. Moreover, we wish to do this in some $b_i$ equivariant way. Observe that the induced action of $b_i$ on the quotient is the same on $[x_i,\infty)/\fequiv$ and $[x_j,\infty)/\fequiv$; both induced actions are expanding dilations. Thus, when $x_i$ and $x_j$ lie in different orbits, we simply choose some gluing map from $[x_i,\infty)$ to $[x_j,\infty)$ that intertwines their respective $\Stab(B_i^+)$ actions. \plabel{fix3:11} \plabel{fix2:16} 
		
		In total, we have glued together $|B_i^+|$ semi-infinite paths from $B_i^+$ to $\infty$ together in a $\Stab(B_i^\pm)$ equivariant way. Perform this gluing procedure for $B_i^-$ as well. Finally, we perform a gluing at each of the translates of $B_i^\pm$ in the way dictated by $\pi_1$-equivariance. This concludes the construction.\plabel{fix2:15:2}
	\end{cnstr}
	\begin{figure}[ht]
		\centering
		\def\svgwidth{0.65\linewidth}
		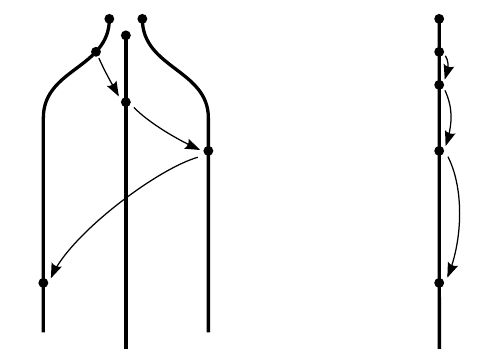
		\caption{On the left, the action of $b_i$ on $\bigcup_{x\in O} [x,\infty)$ is shown for the case $a_i=3$. Here, $O=\set{x_1,x_2,x_3}\subset B_i^-$. The action of $b_i$ descends to the quotient by $\fequiv$. This action on the quotient is shown on the right.}\label{fig:branches}
	\end{figure}
	
	Note that if $x\fequiv y$, then $gx \fequiv gy$ for any $g\in \pi_1(\MM)$. Therefore, \cref{cnstr:gluing} gives a $\pi_1$-equivariant equivalence relation $\fequiv$ on leaves. Unfortunately, $L/\fequiv$ is not homeomorphic to $\R$ due to a phenomenon like that of \cref{fig:tree2}. In \cref{sec:connection}, we present a coarsening of this equivalence relation which finishes the job.
	\end{subsection}
\end{section}

\begin{section}{The flat connection \titletex{$\widehat \J$}}\label{sec:connection}
	We prove the main theorems in this section. Recall that $\Mpunc=\M\setminus\set{K_i}$. Our first goal is to construct a trivial $S^1$-bundle (with structure group $\Homeo^+(S^1)$) on $\Mpunc$ with a flat connection $\J$ whose monodromy around the filling curves on the toroidal ends is trivial. This $S^1$ bundle will lift to an $\R$-bundle with a flat connection $\widehat J$ having the same property.

	For the purposes of defining $\J$ and proving \cref{thm:main}, we will need only the singular foliations $\F^u$ and $\F^s$. The foliation $\F$ will figure in the proof of \cref{thm:main2} where we make use of the fact that $\F$ blows down to $\F^s$.\plabel{fix2:51}

	\begin{subsection}{Preliminaries on connections and partial connections}\label{sec:connectionintro}
		We will use the following convention throughout the rest of the paper.
		\begin{conv}[Concatenation of paths]\label{conv:concat}
			Let $X$ be a topological space. Given paths $\gamma_1$ and $\gamma_2$ in $X$, \xlabel{fix6:2.1}{we define their concatenation $\gamma_1 * \gamma_2$ to be the path which first follows $\gamma_1$, then follows $\gamma_2$.} We use $*$ as multiplication in $\pi_1(X)$. With this convention, the action of $\pi_1(X)$ on $\widetilde X$ by deck transformations is determined as follows. Choose a basepoint $x_0\in \widetilde X$. For any $g\in \pi_1(M)$ and $q\in \widetilde{X}$, let $\gamma_1$ be a path representing $g$ and let $\gamma_2$ be a path in $X$ that lifts to a path from $x_0$ to $q$. Then $\gamma_1 * \gamma_2$ lifts to a path from $x_0$ to $gq$.
		\end{conv}
		Let $B$ be a topological space\plabel{fix2:17}. Given a topological bundle $F\to E \xrightarrow{\pi} B$, a \emph{connection} $\H$ is a choice of a homeomorphism $\H_\gamma\colon \pi^{-1}(\gamma(0))\to \pi^{-1}(\gamma(1))$ for each continuous path $\gamma\colon [0,1]\to B$. These homeomorphisms are required to be independent of the parameterization of $\gamma$ and to satisfy functoriality conditions with respect concatenation of paths. We refer to $\H_\gamma$ as \emph{parallel transport} along $\gamma$ with respect to $\H$. The functoriality conditions are:

		\begin{enumerate}
			\item $\H_{\gamma * \lambda} = \H_{\lambda} \circ \H_{\gamma} $ where $\lambda$ and $\gamma$ are concatenable paths in $B$ and $\gamma * \lambda$ is their concatenation.
			\item $\H_{\gamma * \gamma^{-1}}=id$, where $\gamma^{-1}$ is $\gamma$ traversed backwards.
			\item parallel transport along a trivial path is the identity map.
		\end{enumerate}

		Said in a different way, let $\mathcal C$ be the category whose objects are topological spaces homeomorphic to $F$ and whose morphisms are homeomorphisms. An $F$-bundle over $B$ assigns to each point $x\in B$ the fiber over $x$ which is an object in $\mathcal C$. A connection is an extension of this assignment to a functor from the groupoid of paths in $B$ to $\mathcal C$. This perspective will be useful because we will sometimes specify a connection by defining it on a set of generators for the groupoid of paths in $B$. We might also replace the groupoid of paths in $B$ with a slightly larger but equivalent groupoid.

		\begin{rmk}
			For the purposes of this paper, we will not need to ask that $\H_\gamma$ is close to the identity when $\gamma$ is a short path. Thus, our definition of a connection makes sense even in the absence of a local product structure on the bundle. This is useful because we will actually use a connection to define the local product structure.
		\end{rmk}

		A connection is called \emph{flat} if $\H_\gamma=id$ whenever $\gamma$ is contractible. A flat connection gives rise to \xlabel{fix6:2.2}{a homomorphism of $\pi_1(B)$ into $\Homeo(F)$ defined by $[\gamma] \mapsto \H_\gamma^{-1}$ for $[\gamma] \in \pi_1(B)$.}

		A section of $\pi$ is called \emph{flat} with respect to $\H$ or $\H$-flat if $\H_\gamma(s(\gamma(0)))=s(\gamma(1))$ for all paths $\gamma \colon [0,1]\to B$. When $\H$ is understood, we will suppress mentioning it and simply say that a section is flat. 
		
		We may also refer to sections of $\pi$ over a curve $\gamma\colon [0,1]\to B$; this simply means a section of the pullback bundle over $[0,1]$. We will usually express such a section as a map $s\colon [0,1]\to E$. A flat section over a path is also called a \emph{parallel section}. For any path $\gamma\colon[0,1]\to B$ and $x\in \pi^{-1}(\gamma(0))$, there is a unique parallel section $s_x$ over $\gamma$ satisfying $s_x(0)=x$. It is traced out by parallel transport of $x$ along $\gamma$. In equations, this means $s_x(t)=\H_{\gamma\mid_{[0,t]}}(x)$ where $\gamma \mid_{[0,t]}$ is the restriction of $\gamma$ to $[0,t]$.

		A \emph{partial connection} is similar to a connection except that the homeomorphisms $\H_\gamma$ may not be defined on all of $\pi^{-1}(\gamma(0))$. A partial connection $\H$ is a choice of a homeomorphism $\H_\gamma$ for each path $\gamma\colon [0,1]\to B$ from some (topological) subspace of $\pi^{-1}(\gamma(0))$ to some subspace of $\pi^{-1}(\gamma(1))$. The subspaces may depend on the path $\gamma$. The homeomorphisms are again required to be independent of the parameterization of $\gamma$ and functorial with respect to composition of paths: 
		\begin{enumerate}		
			\item $\H_{\gamma * \lambda}(x) = \H_{\lambda} (\H_{\gamma}(x))$ for all $x\in \pi^{-1}(\gamma(0))$ at which the right side is defined. In particular, $\H_{\lambda * \gamma}(x)$ is defined whenever the right side is defined.
			\item $\H_{\gamma * \gamma^{-1}}(x)=x$ for all $x\in \pi^{-1}(\gamma(0))$ at which $\H_\gamma$ is defined. In particular, $\H_{\gamma * \gamma^{-1}}(x)$ is defined whenever $\H_\gamma(x)$ is defined.\plabel{fix3:12}
			\item Parallel transport along a trivial path is defined on the entire fiber $\pi^{-1}(\gamma(0))$ and is equal to the identity map.
		\end{enumerate}
		A partial connection is called \emph{flat} if for each point $x\in B$ and each compact set $W\subset \pi^{-1}(x)$, there exists a neighbourhood $U$ of $x$ such that for every contractible path $\gamma\colon [0,1] \to U$, $\H_\gamma$ is defined on all of $W$ and agrees with the identity map.

		\begin{rmk}		
		In contrast with the case of flat connections, the monodromy of a flat partial connection around a long, contractible loop need not agree with the identity map on its domain of definition.
		\end{rmk}
		
		A section is called \emph{flat} with respect to a partial connection $\H$ if $\H_\gamma(s(\gamma(0)))=s(\gamma(1))$ for all paths $\gamma:[0,1]\to B$ along which $\H_\gamma$ is defined on $s(\gamma(0))$. Define \emph{flat} or \emph{parallel} sections over paths analogously to the case of connections. Given a path $\gamma\colon [0,1]\to B$ and a point $x\in \pi^{-1}(\gamma(0))$, one may attempt to define a parallel section $s_x$ over $\gamma$ by $s_x(t)=\H_{\gamma\mid_{[0,t]}}(x)$. However, the right side may fail to be defined for large $t$. In this case, let $t_{\max{}}(x)=\sup \set{t \mid \H_{\gamma\mid_{[0,t]}}(x) \text{ is defined}}$. Then we say that the parallel section $s_x$ \emph{blows up} at time $t_{\max{}}(x)$.

		A flat partial connection is roughly the same thing as a foliated bundle, though we will not use that language since the total space of our bundle may not apriori have a topology making it a manifold.
		
		\begin{ex}
			A partial connection $\H$ on the bundle $(0,1)\to (0,1)\times \R \to \R$ may be defined as follows. For any $\gamma$ parameterizing a curve in $\R$ that is monotonically increasing or decreasing, $\H_{\gamma}$ is a homeomorphism between an open subinterval of the fiber over $\gamma(0)$ and an open subinterval of the fiber over $\gamma(1)$. We define $\H_{\gamma}(x)=x+\gamma(0)-\gamma(1)$ for any $x$ satisfying $x\in (0,1)$ and $x+\gamma(0)-\gamma(1)\in (0,1)$.\plabel{fix2:18}\plabel{fix:25} For $\gamma$ which is not monotonically increasing or decreasing, $\H_\gamma$ is defined by composition of monotonic paths. In this case, the range and domain of $\H_{\gamma}$ will be smaller than that specified in the formula. See \cref{fig:partialconnection}.
			\begin{figure}[ht]
				\centering
				\def\svgwidth{80mm}
\begingroup%
  \makeatletter%
  \providecommand\color[2][]{%
    \errmessage{(Inkscape) Color is used for the text in Inkscape, but the package 'color.sty' is not loaded}%
    \renewcommand\color[2][]{}%
  }%
  \providecommand\transparent[1]{%
    \errmessage{(Inkscape) Transparency is used (non-zero) for the text in Inkscape, but the package 'transparent.sty' is not loaded}%
    \renewcommand\transparent[1]{}%
  }%
  \providecommand\rotatebox[2]{#2}%
  \newcommand*\fsize{\dimexpr\f@size pt\relax}%
  \newcommand*\lineheight[1]{\fontsize{\fsize}{#1\fsize}\selectfont}%
  \ifx\svgwidth\undefined%
    \setlength{\unitlength}{187.8346405bp}%
    \ifx\svgscale\undefined%
      \relax%
    \else%
      \setlength{\unitlength}{\unitlength * \real{\svgscale}}%
    \fi%
  \else%
    \setlength{\unitlength}{\svgwidth}%
  \fi%
  \global\let\svgwidth\undefined%
  \global\let\svgscale\undefined%
  \makeatother%
  \begin{picture}(1,0.45138405)%
    \lineheight{1}%
    \setlength\tabcolsep{0pt}%
    \put(0,0){\includegraphics[width=\unitlength,page=1]{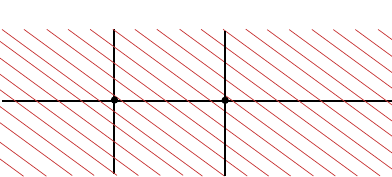}}%
    \put(0.2909813,0.4070427){\makebox(0,0)[t]{\lineheight{1.25}\smash{\begin{tabular}[t]{c}$\pi^{-1}(\gamma(0))$\end{tabular}}}}%
    \put(0.57557825,0.40762949){\makebox(0,0)[t]{\lineheight{1.25}\smash{\begin{tabular}[t]{c}$\pi^{-1}(\gamma(1))$\end{tabular}}}}%
  \end{picture}%
\endgroup%

				\caption{The total space for the bundle is shown with the fiber direction vertical. Parallel sections for the partial connection $\H$ are shown in red. Each parallel section is defined only for a limited time.}\label{fig:partialconnection}
			\end{figure}
		\end{ex}
	\end{subsection}

	\begin{subsection}{The partial connection \titletex{$\Jpar$}}\label{subsec:partial}
		Let $\Sigmapunc$ be a typical fiber of $\Mpunc$, ie $\Sigma$ punctured at its intersections with the $K_i$'s. $\Mpunc$ comes equipped with transverse stable and unstable codimension 1 foliations $\F^{s}$ and $\F^{u}$.\plabel{fix:23} It also has a natural incomplete Solv metric $g$ locally expressible as $dt^2 + \lambda^t dx^2 + \lambda^{-t} dy^2$, where $x$ and $y$ are local coordinates on $\Sigmapunc$, $\lambda>1$ is the stretch factor of $\phi$, and $t$ is the coordinate transverse to $\Sigmapunc$. By the orientability constraints, we may consistently establish cardinal directions north, south, east, and west. We choose our orientations on $\F^s$ and $\F^u$ so that $\F^s\cap \Sigmapunc$ is oriented eastward and $\F^{u}\cap \Sigmapunc$ to be oriented northward.\plabel{fix2:28:3} For us, a \emph{rectangle} will always mean a rectangle in a fiber surface free of singularities on its interior with top and bottom parallel to the east-west direction and left and right sides parallel to the north-south direction. Recall that we constructed $\F$ by splitting open $\F^s$. Let $\Sigmapunc_p$ be the fiber surface containing a point $p$. 
		
		A \emph{generalized leaf} of $\F^u\cap \Sigmapunc$ is one of:
		\begin{enumerate}
			\item a leaf of $\F^u \cap \Sigmapunc$ which is not a prong, or
			\item the concatenation of two prongs of $\F^u \cap \Sigmapunc$ which meet at the same singularity and make an angle of $\pi$ (ie a limit of leaves of $\F^u \cap \Sigmapunc$ approaching the singularity).
		\end{enumerate}
		
		We may similarly define a \emph{generalized leaf} of $\F^u$. \plabel{fix:dummy}The concatenation of two prongs $P_1$ and $P_2$ of $\F^u \cap \Sigmapunc$\plabel{fix2:20} incident with the same singularity $q$ formally requires an extra dummy point since $q$ itself is not a point in $\Mpunc$. We usually call the dummy point $q^*$ so that the generalized leaf is $P_1 \cup \set{q^*} \cup P_2$. If there is more than one generalized leaf in play, we will use $q^{**}, q^{***}$, etc to denote their dummy points.

		A point on a prong of $\F^u$ is contained in exactly two generalized leaves, but it would be better if each point were contained in exactly one generalized leaf. Thus, we will formally double each point $p$ on a prong $P$ of $\F^u$ into two points $p_{\xleft}$ and $p_{\xright}$. The points $p_{\xleft}$ and $p_{\xright}$ should be regarded as points in $\Mpunc$ infinitesimally perturbed to the west and east of $P$ respectively. Let $Z$ be the space obtained from $\Mpunc$ by doubling each point on a prong of $\F^u$. The open sets of the topology on $Z$ are the pullbacks of open sets in $\Mpunc$ under the obvious projection. Note that $Z$ is not Hausdorff and not even $T_0$ because for any point $p$ on a prong, $p_{\xleft}$ and $p_{\xright}$ are not topologically distinguishable.\plabel{fix2:21} In $Z$, each point is contained in exactly one generalized leaf of $\F^u$. One can construct a path in $Z$ from $p_{\xleft}$ to $p_{\xright}$ whose image is the two point set $\set{p_{\xleft}, p_{\xright}}$; such a path should be interpreted as an infinitesimal path in $\Mpunc$ crossing $P$ from $p_{\xleft}$ to $p_{\xright}$. As a consequence, $Z$ is path connected.\plabel{fix5:5}

		\begin{figure}[ht]
			\centering
			\def\svgwidth{80mm}
\begingroup%
  \makeatletter%
  \providecommand\color[2][]{%
    \errmessage{(Inkscape) Color is used for the text in Inkscape, but the package 'color.sty' is not loaded}%
    \renewcommand\color[2][]{}%
  }%
  \providecommand\transparent[1]{%
    \errmessage{(Inkscape) Transparency is used (non-zero) for the text in Inkscape, but the package 'transparent.sty' is not loaded}%
    \renewcommand\transparent[1]{}%
  }%
  \providecommand\rotatebox[2]{#2}%
  \newcommand*\fsize{\dimexpr\f@size pt\relax}%
  \newcommand*\lineheight[1]{\fontsize{\fsize}{#1\fsize}\selectfont}%
  \ifx\svgwidth\undefined%
    \setlength{\unitlength}{239.42173995bp}%
    \ifx\svgscale\undefined%
      \relax%
    \else%
      \setlength{\unitlength}{\unitlength * \real{\svgscale}}%
    \fi%
  \else%
    \setlength{\unitlength}{\svgwidth}%
  \fi%
  \global\let\svgwidth\undefined%
  \global\let\svgscale\undefined%
  \makeatother%
  \begin{picture}(1,0.78167677)%
    \lineheight{1}%
    \setlength\tabcolsep{0pt}%
    \put(0,0){\includegraphics[width=\unitlength,page=1]{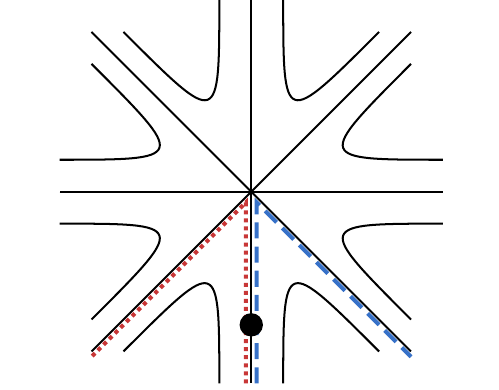}}%
    \put(0.52887698,0.15451902){\makebox(0,0)[lt]{\lineheight{1.25}\smash{\begin{tabular}[t]{l}$p$\end{tabular}}}}%
    \put(0.82557119,0.0108354){\makebox(0,0)[t]{\lineheight{1.25}\smash{\begin{tabular}[t]{c}$\theta\circ\pi^{-1}(p_{\rm east})$\end{tabular}}}}%
    \put(0.17913982,0.0148655){\makebox(0,0)[t]{\lineheight{1.25}\smash{\begin{tabular}[t]{c}$\theta\circ\pi^{-1}(p_{\rm west})$\end{tabular}}}}%
  \end{picture}%
\endgroup%

			\caption{The unstable foliation near a singularity with $k_i=4$. The fibers over $p_{\xleft}$ and $p_{\xright}$ are each unions of two prongs.}\label{fig:fibers}
		\end{figure}

		For any $p\in Z$, let $\Sigmapunc_p$ be the fiber surface containing $p$. (Here we are abusing notation slightly by using $\Sigmapunc_p$ to refer to its lift to $Z$. Let
		\begin{equation*}
		\begin{split}
			E_\pi=\{ (p,y) \mid  p&,y\in Z \text{ and }\\
								&\text{\(y\) is on the generalized leaf of $\F^u\cap \Sigmapunc_p$ which contains \(p\)}\}.
		\end{split}
		\end{equation*}\plabel{fix2:22:1}
		Let $\pi\colon E_\pi \to Z$ be the projection map $\pi\colon (p,y)\mapsto p$. Then $\pi$ defines an $\R$ bundle over $Z$. Intuitively, one should think of $\pi$ as the $\R$-bundle over $\Mpunc$ whose fiber over a point $p\in \Mpunc$ is the generalized leaf of $\F^{u} \cap \Sigmapunc_p$ containing $p$.

		Define the auxiliary map $\theta\colon E_\pi \to Z$ by $(p,y)\mapsto y$. In practice, we visualize points in $E_\pi$\plabel{fix2:22:2} via their images under $\theta$.\plabel{fix:theta}
		
		Note that there is no obvious local product topology on $E_\pi$ since the leaves of $\F^{u}$ diverge around every singular point. We will eventually define a topology by using a flat connection in \cref{sec:fullconnection}.

		In the rest of this section, when there is no chance of confusion, we will conflate $Z$ and $\Mpunc$. This means that we will think of $p_{\xleft}$ and $p_{\xright}$ as points in $\Mpunc$, think of $\pi$ as an $\R$-bundle over $\Mpunc$, and think of $\theta$ as a map from $E_\pi$ to $\Mpunc$. We will also think of the path from $p_{\xleft}$ to $p_{\xright}$ in $Z$ as an infinitesimal path in $\Mpunc$ with distinct endpoints. Since $\Mpunc$ embeds in $Z$ (for example by always choosing $p\mapsto p_{\xleft}$ when $p$ lies on a prong)\plabel{fix2:23}, a connection on $Z$ restricts to a connection on $\Mpunc$. Thus, we only gain generality by working in $Z$.

		\plabel{fix3:i}For a curve $\gamma:[0,1]\to \Mpunc$, there is a partially defined map $$h_\gamma \colon \theta\circ\pi^{-1}(\gamma(0))\to \theta \circ \pi^{-1}(\gamma(1))$$ which we refer to as the holonomy of $\F^s$ along $\gamma$. To define $h_\gamma$, choose a lift of $\gamma$ to the leaf space of $\widetilde \F^s$. (Here, $\widetilde {\F^s}$ is a foliation on $\widetilde \Mpunc$, obtained by lifting the restriction of $\F^s$ to $\Mpunc$ \plabel{fix3:ii}.\plabel{fix5:6}) Choose a compatible lift of $\theta\circ\pi^{-1}(\gamma(0))$ to a line embedded in the leaf space of $\widetilde{\F^s}$. Here, compatible means that the lift of $\theta\circ\pi^{-1}(\gamma(0))$ agrees with the lift of $\gamma$ at $\gamma(0)$. Similarly, choose a lift of $\theta\circ \pi^{-1}(\gamma(1))$ that agrees with the lift of $\gamma$ at $\gamma(1)$.\plabel{fix5:7} Then for $y_0\in \theta\circ\pi^{-1}(\gamma(0))$, we define $h_\gamma(y_0)$ to be the point in $\theta\circ\pi^{-1}(\gamma(1))$ whose lift to the leaf space of $\widetilde {\F^s}$ coincides with the lift of $y_0$, if such a point exists. Note that $h_\gamma$ is only defined on the subinterval of $\theta\circ\pi^{-1}(\gamma(0))$ whose lift intersects the lift of $\theta\circ\pi^{-1}(\gamma(1))$. See \cref{fig:holonomy} for a picture in $\Mpunc$. \plabel{fix:hgamma}

		\begin{figure}[ht]
			\centering
			\def\svgwidth{60mm}
\begingroup%
  \makeatletter%
  \providecommand\color[2][]{%
    \errmessage{(Inkscape) Color is used for the text in Inkscape, but the package 'color.sty' is not loaded}%
    \renewcommand\color[2][]{}%
  }%
  \providecommand\transparent[1]{%
    \errmessage{(Inkscape) Transparency is used (non-zero) for the text in Inkscape, but the package 'transparent.sty' is not loaded}%
    \renewcommand\transparent[1]{}%
  }%
  \providecommand\rotatebox[2]{#2}%
  \newcommand*\fsize{\dimexpr\f@size pt\relax}%
  \newcommand*\lineheight[1]{\fontsize{\fsize}{#1\fsize}\selectfont}%
  \ifx\svgwidth\undefined%
    \setlength{\unitlength}{226.77165222bp}%
    \ifx\svgscale\undefined%
      \relax%
    \else%
      \setlength{\unitlength}{\unitlength * \real{\svgscale}}%
    \fi%
  \else%
    \setlength{\unitlength}{\svgwidth}%
  \fi%
  \global\let\svgwidth\undefined%
  \global\let\svgscale\undefined%
  \makeatother%
  \begin{picture}(1,0.74251316)%
    \lineheight{1}%
    \setlength\tabcolsep{0pt}%
    \put(0,0){\includegraphics[width=\unitlength,page=1]{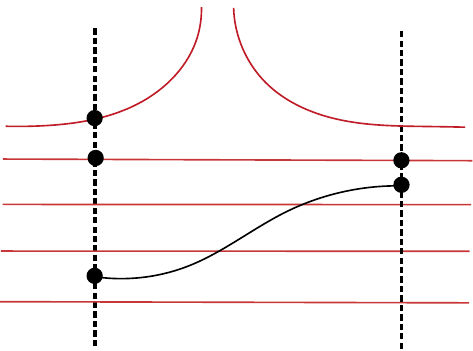}}%
    \put(0.8752609,0.33316896){\makebox(0,0)[lt]{\lineheight{1.25}\smash{\begin{tabular}[t]{l}$\gamma(1)$\end{tabular}}}}%
    \put(0.86679969,0.42228531){\makebox(0,0)[lt]{\lineheight{1.25}\smash{\begin{tabular}[t]{l}$h_\gamma(y_0)$\end{tabular}}}}%
    \put(0.85029115,0.69345689){\makebox(0,0)[t]{\lineheight{1.25}\smash{\begin{tabular}[t]{c}$\theta\circ\pi^{-1}(\gamma(1))$\end{tabular}}}}%
    \put(0.20104386,0.69832483){\makebox(0,0)[t]{\lineheight{1.25}\smash{\begin{tabular}[t]{c}$\theta\circ\pi^{-1}(\gamma(0))$\end{tabular}}}}%
    \put(0.18136179,0.51299089){\makebox(0,0)[rt]{\lineheight{1.25}\smash{\begin{tabular}[t]{r}$y_0'$\end{tabular}}}}%
    \put(0.17827479,0.42355703){\makebox(0,0)[rt]{\lineheight{1.25}\smash{\begin{tabular}[t]{r}$y_0$\end{tabular}}}}%
    \put(0.17759619,0.14559869){\makebox(0,0)[rt]{\lineheight{1.25}\smash{\begin{tabular}[t]{r}$\gamma(0)$\end{tabular}}}}%
  \end{picture}%
\endgroup%

			\caption{This is a picture in $\Mpunc$. The leaves of $\F^s$ are shown in red. The point $h_\gamma(y_0)$ lies on the same stable leaf as $y_0$. Moreover, there exists a path from $y_0$ to $h_\gamma(y_0)$ in a leaf of $\F^s$, and this path is homotopic to $\gamma$ through paths connecting $\theta\circ\pi^{-1}\gamma(0)$ to $\theta\circ\pi^{-1}(\gamma(1))$. These properties uniquely specify $h_\gamma(y_0)$. On the other hand, $h_\gamma$ is not defined at $y_0'$.}\label{fig:holonomy}
		\end{figure}

		Now we shall define a partial connection $\Jpar$ on the bundle $\pi$. We initially define $\Jpar$ on three types of short paths which generate a dense set of paths in $\Mpunc$. Whenever $h_\gamma$ exists, we define $\Jpar_\gamma$ so that 
		\begin{equation}\label{eqn:hol}
			\theta \circ \Jpar_\gamma = h_\gamma \circ \theta.
		\end{equation}
		In other words, the $\theta$-image of a parallel section tries to stay on the same leaf of $\F^s$. This is what happens in type 1 and type 2 curves below. However, $\Jpar$ will also be defined on some curves (type 3 curves below) along which holonomy does not exist.
		
		\begin{enumerate}[label=\textbf{Type~\arabic*}]
			\item Suppose $\gamma$ is a path in a generalized leaf $\mu$ of $\F^u$. In this case, we simply define $\Jpar$ using the holonomy of $\F^s$. In $\widetilde \mu \cong \R^2$, the leaves of $\widetilde{\F^s}\cap \widetilde{\mu}$ (ie the lifts of flow lines of the suspension flow of $\phi$) and the lifts to $\widetilde \mu$ of the $\theta$-images of fibers of $\pi$ form two transverse codimension-1 foliations. This pair of foliations is topologically conjugate to the horizontal and vertical foliations of $\R^2$. It follows that $h_\gamma$ is fully defined on $\theta\circ \pi^{-1}(\gamma(0))$. We may thus define $\Jpar_\gamma$ as in \cref{eqn:hol}. In other words, we are pushing $\theta$-images of fibers of $\pi$ along the suspension flow of $\phi$. \plabel{fix2:24}
			
			\item Suppose $R$ is a rectangle in $\Sigmapunc_p$ for some $p$. Let $\gamma$ be a path in $R$ from the east to the west side of $R$.\plabel{fix:27}\plabel{fix2:25}\plabel{fix2:28:1} In this case, $h_\gamma$ identifies the east and west sides of the rectangle by the obvious isometry, so we define $\Jpar$ as in \cref{eqn:hol}. (If an endpoint of $\gamma$ lies on a prong, then we make the same definition of $\Jpar_\gamma$ regardless of whether the endpoint is infinitesimally perturbed to the east or the west. We also allow the north or south side of the rectangle to contain a singularity.)
			
			\item \plabel{fix:41}Let $p$ be a point on a prong $P$ of $\F^u \cap \Sigmapunc_p$\plabel{fix5:13} and let $\gamma$ be the infinitesimal path from $p_{\xleft}$ to $p_{\xright}$. Let $q$ be the singularity terminating $P$. Let $P_{\xright}$ and $P_{\xleft}$ be the prongs at $q$ adjacent to $P$ so that $$\theta\circ\pi\inv(p_{\xleft})=P\cup \set{q^*} \cup P_{\xleft}$$ and $$\theta\circ\pi\inv(p_{\xright})=P\cup \set{q^{**}} \cup P_{\xright}.$$ Briefly conflating $\theta\circ\pi^{-1}(p_{\xleft})$ with $\pi^{-1}(p_{\xleft})$, let us define $$\Jpar_{\gamma}\colon P \cup \set{q^*} \cup P_{\xleft} \to P\cup \set{q^{**}} \cup P_{\xright}$$ separately in two subcases:
			\begin{enumerate}
				\item[\textbf{Type 3a}] If $p$ lies to the south of $q$, then declare that $\Jpar_\gamma$ sends $q^*$ to $q^{**}$, acts as the identity on $P$, and stretches $P_{\xleft}$ by a factor of $\lambda^{m_iq_i/p_i}$, where $(p_i;q_i)$ is the surgery slope at the relevant singularity and $m_i$ is the $\phi$-period of the singularity. Here, $\lambda$ is the stretch factor of $\phi$ and the relevant metric is the singular Euclidean metric induced on $\widetilde{\Sigmapunc_p}$ by the incomplete Solv metric $g$, as introduced at the beginning of \cref{subsec:partial}.\plabel{fix:30}\plabel{fix3:15}\plabel{fix5:9} It is instructive to look at \cref{fig:push} which illustrates how parallel sections over type 3a curves behave.
				\item[\textbf{Type 3b}] If $p$ lies to the north of $q$, then make the same definition except that we use the inverse stretch factor $\lambda^{-m_iq_i/p_i}$ instead.
			\end{enumerate}			
			Note that $h_\gamma$ is defined on $P$, but not on $P_{\xleft}$. Thus, we had some freedom to choose the dilation factors in type 3a and 3b. Our choice is designed to make \cref{prop:fillable} work.
		\end{enumerate}

		Now we will extend the definition of $\Jpar$ to arbitrary curves in $\Mpunc$. By composition of parallel transport maps, we may define $\Jpar$ for concatenations of type 1-3 curves. Given an arbitrary path $\gamma$ in $\Mpunc$, we may $C^0$ approximate $\gamma$ by a concatenation of type 1-3 curves. There may be many different ways to do this approximation  resulting in different parallel transport maps with slightly different domains of definition; for example, a type 2 curve may be written as a concatenation of a type 2, type 3, and a type 2 curve. We define $\Jpar_\gamma$ by patching together all of these parallel transport maps. More precisely, for $x\in\pi^{-1}(\gamma(0))$, we say that $\Jpar_\gamma(x)=y$ if there is a sequence of paths $\gamma_i$ from $\gamma(0)$ to $\gamma(1)$ such that
		\begin{itemize}\plabel{fix5:10}
			\item $\gamma_i$ is a concatenation of type 1-3 curves,
			\item the sequence of paths $\gamma_i$ converges to $\gamma$ in the $C^0$ topology as $i \to \infty$, and
			\item \xlabel{fix6:4}{$\Jpar_{\gamma_i}(x)=y$.}
		\end{itemize}
		\cref{prop:flat1} below guarantees that the various different $C^0$ approximations of $\gamma$ yield parallel transport maps that agree on the intersection of their domains of definition. So $\Jpar_\gamma$ is well defined.
		
		\begin{rmk}
			The definition of $\Jpar$ along Type 3 curves is motivated by \cref{cnstr:gluing}. The images in $L$ of $P_{\xleft}$ and $P_{\xright}$ are paths the form $[x,\infty)$ constructed in \cref{sec:gluing}. The failure of $\Jpar$ to be a connection is analogous to the failure of $L/\fequiv$ to be a line in \cref{cnstr:gluing}. \plabel{fix:32}\plabel{fix3:16}
		\end{rmk}

		\begin{defn}\label{defn:commutator}
			A \emph{commutator of type $i$ and type $j$} curves is a \xlabel{fix6:5}{contractible loop formed by concatenating four arcs which alternate between type $i$ and type $j$ curves}.\plabel{fix5:11}
		\end{defn}
		
		\begin{prop}\label{prop:flat1}
			Given any point $p\in \Mpunc$ and a compact subset $W$ of $\pi^{-1}(p)$, the monodromy of $\Jpar$ around sufficiently small contractible loops based at $p$ formed by concatenating type 1, 2, and 3 curves is defined on $W$ and equal to the identity. It follows that $\Jpar$ is flat.
		\end{prop}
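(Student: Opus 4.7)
The plan is to establish flatness by a local analysis at $p$: decompose any sufficiently small contractible loop based at $p$ into type 1, 2, and 3 pieces and check that the composition of parallel transports is the identity on $W$. Since $W \subset \pi^{-1}(p)$ is compact, the generalized leaf $\theta\circ\pi^{-1}(p)$ meets only finitely many prongs $P_1,\dots,P_N$ in the bounded portion containing $W$. I would shrink the neighborhood $U$ of $p$ so that, for any loop $\gamma\subset U$, every parallel transport is defined on all of $W$ and interacts only with these finitely many prongs; this takes care of the ``defined on $W$'' clause.

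The easy case is when the decomposition of $\gamma$ uses no type 3 pieces, so that $\Jpar_\gamma = h_\gamma$ by \cref{eqn:hol}. Since $\gamma$ is small and contractible, it lies in an $\F^s$-foliation chart, so $h_\gamma = \mathrm{id}$. A dense set of loops falls into this category.

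The main case is when $\gamma$ crosses one or more unstable prongs $P_j$. Because $\gamma$ is contractible in $\Mpunc$, it cannot encircle the puncture at the relevant core $K_i$, so each prong $P_j$ is crossed an even number of times in cancelling pairs. For each such pair the total parallel transport is the composition of a forward type 3, some type 1 and type 2 pieces, a reverse type 3, and more type 1 and type 2 pieces. On the common prong $P_j$, each type 3 acts as the identity, and the combined type 1 and type 2 contributions are the $\F^s$-holonomy around a contractible loop, which is trivial. On the ``new'' prong $P_{j,\xleft}$ or $P_{j,\xright}$ introduced at the crossing, the forward type 3 contributes a dilation by $\lambda^{\pm m_jq_j/p_j}$ and the reverse type 3 contributes the reciprocal dilation; the intermediate type 1 and type 2 moves fix this new prong pointwise, since the prong is rooted at the fixed singularity $q_j$.

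The main obstacle, and the place where the particular form of the type 3 definition matters, is the verification that these dilations cancel cleanly against the intermediate $\F^s$-holonomy. The key input is that the dilation factor depends only on the prong, not on the point along $P_j$ where the crossing happens, so it commutes with the translation that the intermediate $\F^s$-holonomy performs along $P_j$ in the singular Euclidean metric of $\Sigmapunc$. Granting this compatibility, the general case reduces by induction on the number of prong crossings to the zero-crossing case, yielding $\Jpar_\gamma = \mathrm{id}_W$ for all sufficiently small contractible loops, and hence flatness of $\Jpar$.
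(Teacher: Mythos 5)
The overall structure of your argument parallels the paper's: decompose a small contractible loop into pieces of the three types, observe that the interesting contributions come from prong crossings, and show that crossings come in cancelling pairs. The paper organizes this slightly differently, decomposing the nullhomotopy into a grid of ``commutator'' quadrilaterals whose sides alternate between two arc types and reducing to the case of a type~1/type~3 commutator; but this is essentially the same reduction.

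The gap is in your treatment of the ``new'' prong $P_{j,\xleft}$ or $P_{j,\xright}$. You assert that ``the intermediate type 1 and type 2 moves fix this new prong pointwise, since the prong is rooted at the fixed singularity.'' This is false: a type 1 arc moving a distance $\alpha$ in the direction transverse to $\Sigmapunc$ acts on the fiber as a \emph{dilation} by $\lambda^{\alpha}$ (in the induced singular Euclidean metric), not as the identity. Being rooted at a fixed singularity makes the prong invariant \emph{as a set}, but the holonomy scales distances along it. Consequently, your subsequent claim that the type~3 dilation factor ``commutes with the translation that the intermediate $\F^s$-holonomy performs'' misidentifies the nature of the intermediate holonomy, and ``the factor depends only on the prong'' is not by itself a reason for the required commutation; a dilation centered at $q_j$ does not commute with a generic translation.

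What is actually needed, and what the paper supplies, is the observation that \emph{both} maps are (piecewise) dilations and that the commutator quadrilateral has trivial monodromy: if the forward type~3 has breakpoint at signed distance $a$ from the base point and acts as $x\mapsto a+\alpha_j(x-a)$ beyond $a$, and the transverse holonomy is $x\mapsto \beta x$, then the reverse type~3 on the upper edge has its breakpoint at $\beta a$ rather than $a$, and one checks directly that the four-fold composition is the identity. The paper phrases this as the two dilations commuting; the precise statement is that the full commutator monodromy vanishes because the holonomy transports the breakpoint of the piecewise dilation consistently. Your proof, as written, skips exactly this verification — which is the content of the proposition — by wrongly asserting that nothing happens to the new prong in between the two type~3 pieces.

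Two smaller remarks: your ``defined on $W$'' reduction should be made explicit via a concrete neighborhood (the paper uses an $\eps$-thickening of two tall skinny rectangles $R_{\xleft}\cup R_{\xright}$ abutting the prong), since without this one cannot guarantee the intermediate holonomies stay defined on all of $W$. And the reduction to ``cancelling pairs'' of prong crossings should be justified by an innermost-pair argument on the disk bounded by $\gamma$, as the parity statement alone does not organize the crossings into nested pairs.
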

		\begin{proof}
			Without loss of generality, assume $p\in \interior \theta(W)$. If $p$ does not lie on a prong, then one can find a \xlabel{fix6:6}{tall, skinny rectangle $R$ in $\Sigmapunc_p$} containing a neighbourhood of $\theta(W)$. Thicken this rectangle by $\eps$ in the direction transverse to $\Sigmapunc_p$. Now holonomy of transversals to $\F^s$ in $R\times (-\eps,\eps)$ exists along all curves in a neighbourhood $p$, so the required monodromies are trivial in this case.

			Suppose instead that $p=p_{\xleft}$ lies on a prong $P\subset \Sigmapunc_p$. This time, choose a tall, skinny rectangle $R_{\xleft}$ such that its east side contains $\theta(W)$. Choose another tall, skinny rectangle $R_{\xright}$ such that its west side contains $\theta(\Jpar_\gamma W)$, where $\gamma$ is the type 3 curve from $p_{\xleft}$ to $p_{\xright}$. Let $U$ be an $\varepsilon$-thickening of $R_{\xleft}\cup R_{\xright}$ in the direction transverse to $\Sigmapunc_p$. By construction, $p$ lies on the interior of $U$.

			We need to show that monodromy around a contractible loop in $U$ based at $p$ is trivial on $W$. The null-homotopy of such a loop may be decomposed into disks whose boundaries are commutators of type $i$ and type $j$ arcs. Moreover, we can arrange that the only commutators crossing $P\times(-\eps,\eps)$ are commutators of type 3 and type 1 arcs. This can be achieved by subdividing every type 2 arc crossing $P\times (-\eps, \eps)$ into a concatenation of a type 2 arc in $R_{\xleft} \times (-\eps,\eps)$, a type 3 arc crossing $P\times (-\eps, \eps)$, and a type 2 arc in $R_{\xright}\times(-\eps,\eps)$. This replacement only increases the domain of definition of the $\Jpar$. Any commutator contained in $R_{\xleft}\times(-\eps, \eps)$ or $R_{\xright}\times(-\eps, \eps)$ has trivial monodromy on $W$. In particular, this includes any commutator involving a type 2 arc. So it only remains to check that the monodromy around a commutator of a type 1 arc and a type 3 arc vanishes.\plabel{fix5:12} Parallel transport along a type 1 arc which moves a distance $\alpha$ in the direction transverse to $\Sigmapunc_p$ acts as a dilation by factor $\lambda^\alpha$ (relative to the induced Euclidean metric on fibers). On the other hand, parallel transport along a type 3 arc acts as a piecewise dilation. These two maps commute, as desired. Finally, it's easy to check that parallel transport along any path $\gamma\subset U$ starting at $p$ is defined on $W$.
		\end{proof}
		
		\plabel{fix3:17}
		Now we will prove one of the key properties that make our constructions work.
		\begin{prop}\label{prop:fillable}
			The monodromy of $\Jpar$ along a curve parallel to a filling slope \xlabel{fix6:7}{on a toroidal end} of $\Mpunc$ is trivial. To be precise, let $p$ be a point on an unstable prong incident to an end of $\Mpunc$. For any compact subspace $W \subset \pi^{-1}(p)$, the monodromy of $\Jpar$ around a small enough loop $\gamma$ based at $p$ and homotopic to the Dehn filling meridian is defined on $W$ and equal to the identity.
		\end{prop}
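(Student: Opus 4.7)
The plan is to exhibit a convenient representative of the Dehn filling meridian class based at $p$ and to compute its monodromy directly, the point being that the Type 3 dilation factor $\lambda^{\pm m_iq_i/p_i}$ was designed precisely to make that monodromy trivial. Since $\Jpar$ is flat by \cref{prop:flat1}, the homotopy class of the loop near $p$ is all that matters, so I may draw $\gamma$ inside an arbitrarily small tubular neighborhood of $K_i$. Using the slope conventions of \cref{conv:slopes}, I take $\gamma = \gamma_{\text{flow}} \cdot \gamma_{\text{rot}}$, where $\gamma_{\text{flow}}$ is the single Type 1 arc obtained by flowing from $p$ along the suspension flow for time $m_iq_i$, landing at $\phi^{m_iq_i}(p) \in \Sigmapunc_p$, and $\gamma_{\text{rot}}$ is a path inside $\Sigmapunc_p$ from $\phi^{m_iq_i}(p)$ back to $p$ that winds clockwise around the cone point $q \in K_i \cap \Sigma_p$ through total angle $2\pi p_i$. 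I decompose $\gamma_{\text{rot}}$ into an alternating concatenation of Type 3 arcs (one for each unstable prong crossing) and Type 2 arcs (inside the quadrants between consecutive unstable prongs).

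Next I compute the contribution of each piece. Along $\gamma_{\text{flow}}$, Type 1 parallel transport inside the $2$-dimensional unstable leaf $\mu$ containing the flow line coincides with the first-return map $\phi^{m_iq_i}$ carrying $\pi^{-1}(p)$ onto $\pi^{-1}(\phi^{m_iq_i}(p))$; measured in the singular Euclidean metric on $\Sigmapunc_p$, this is the dilation by $\lambda^{m_iq_i}$ in the unstable direction, composed with the rotation of $q$ by the angle $2\pi \omega_iq_i$. Each Type 2 arc inside $\gamma_{\text{rot}}$ acts by the identity in the natural parametrization of fibers. Each Type 3 arc acts by the identity on the crossed unstable prong and as a dilation of the adjacent prong by $\lambda^{\pm m_iq_i/p_i}$, with sign determined by whether the crossed prong points north or south of $q$.

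To see everything cancel, I track a point $u \in \pi^{-1}(p)$ on the starting prong $P_0$ at unstable arclength $y_0$ from $q$. The flow sends $u$ to arclength $\lambda^{m_iq_i}y_0$ on the rotated prong $\phi^{m_iq_i}(P_0)$. Then as $\gamma_{\text{rot}}$ walks $u$ back through the $\F^u$ prongs around $q$, each Type 3 arc either fixes $u$ (when $u$ sits on the crossed prong) or stretches $u$ by $\lambda^{\pm m_iq_i/p_i}$ (when $u$ sits on the dilated adjacent prong). Bookkeeping the alternating north/south pattern of the prongs around $q$ together with the slope arithmetic $p_i \equiv \omega_iq_i \pmod{k_i}$ shows that the effective dilations all carry the same sign and combine to $\lambda^{-m_iq_i}$, exactly cancelling the factor coming from $\gamma_{\text{flow}}$. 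Hence $u$ returns to arclength $y_0$ on $P_0$, so $\Jpar_\gamma$ acts as the identity on $\pi^{-1}(p)$. Choosing $\gamma$ to lie in a sufficiently small neighborhood of $K_i$ ensures that every intermediate parallel transport remains defined on the prescribed compact set $W$, so $\Jpar_\gamma|_W$ is the identity.

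The main obstacle is the bookkeeping in the third paragraph: one has to verify that the $p_i$ effective Type 3 dilations acting on a given point of the fiber all carry the same sign (all Type 3a or all Type 3b) rather than partially cancelling, and that their product is exactly $\lambda^{-m_iq_i}$ rather than some other power. This calibration is the entire reason the Type 3 stretch factor was defined to be $\lambda^{\pm m_iq_i/p_i}$: the exponent $1/p_i$ is chosen so that $p_i$ successive dilations accumulate to precisely undo the $\lambda^{m_iq_i}$ stretch coming from the $m_iq_i$ units of flow time along $\gamma_{\text{flow}}$.
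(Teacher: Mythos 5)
Your decomposition of the meridian into $\gamma_{\text{flow}}$ and $\gamma_{\text{rot}}$ and the ensuing dilation accounting ($\lambda^{m_iq_i}$ versus $\lambda^{-m_iq_i}$) is exactly the paper's argument, and the paper likewise reduces the rotation arc to a chain of Type 2 and Type 3 pieces. So the route is the same. The place where you have correctly identified the crux --- ``verify that the $p_i$ effective Type 3 dilations acting on a given point of the fiber all carry the same sign'' --- is also where your proposed justification is a bit off. It is not true that the effective crossings are ``all Type 3a or all Type 3b.'' Walking clockwise around the cone point, the arc alternately crosses southward and northward unstable prongs; at a southward prong the clockwise direction is east-to-west (so one gets an \emph{inverse} Type 3a arc), and at a northward prong it is west-to-east (a \emph{forward} Type 3b arc). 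The point is that both of these dilate by the same factor $\lambda^{-m_iq_i/p_i}$, which follows directly from the definitions of Types 3a and 3b; the slope congruence $p_i\equiv\omega_iq_i\pmod{k_i}$ plays no role here --- it only ensures that $\gamma_{\text{rot}}$ actually closes the loop. The paper does this bookkeeping explicitly by tracking both halves of the fiber around a single $2\pi$ turn (your single-point tracking is an equivalent packaging) and confirming the net dilation is $\lambda^{-m_iq_i/p_i}$, then multiplying across $p_i$ turns.

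One further point you gloss over: the claim that a small enough $\gamma$ keeps all intermediate transports defined on $W$ needs a touch more care, since $\gamma_{\text{flow}}$ first \emph{expands} $W$ by $\lambda^{m_iq_i}$, and the Type 2 portions of $\gamma_{\text{rot}}$ are only defined within rectangles bounded by the next singularity. The fix (which the paper carries out) is to take $\gamma_{\text{rot}}$ to hug the singularity: then the bounding rectangles can be made arbitrarily tall in the north-south direction, so they eventually contain the transported $W$. With these two points supplied, your argument is complete and matches the paper's.
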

		\begin{proof}
			The boundary curve in $N(K_i)$ can be written as the suspension of a small perturbation of a point in $\Sigma\cap K_i$ under the map $\phi^{m_iq_i}$ and an arc in $\Sigmapunc$ travelling a clockwise angle of $2\pi p_i$ around the singularity. Parallel transport along the first arc stretches distances by a factor of $\lambda^{m_iq_i}$. Roughly speaking, the second arc contains $p_i$ subarcs of type 3b and $p_i$ subarcs which are (the inverse of) type 3a. Together, $\Jpar$-parallel transport along these arcs produces a dilation of a factor of $(\lambda^{-m_iq_i/p_i})^{p_i}$. Thus, the composite monodromy is dilation by a factor $\lambda^{m_iq_i}(\lambda^{-m_iq_i/p_i})^{p_i}=1$.
			
			\begin{figure}[ht]
				\centering
				\def\svgwidth{70mm}
				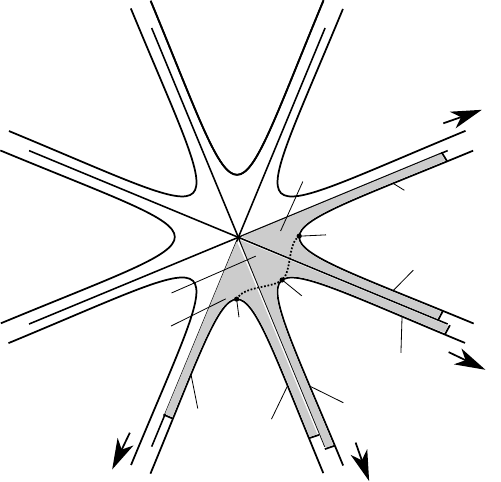
				\caption{}\label{fig:monodromy}
			\end{figure}

			In more detail, let us analyze parallel transport around a short arc travelling an angle of $2\pi$ around the singularity. \cref{fig:monodromy} shows the foliation $\F^{u}\cap \Sigmapunc$ near a singularity along with an arc $\gamma$ travelling an angle of $2\pi$ around the singularity. The arc $\gamma$ is chosen so that $\gamma(0),\gamma(1/2)$ and $\gamma(1)$ lie on prongs of the \emph{stable} foliation $\F^{s}\cap \Sigmapunc$ (not shown). 
			The $\theta$-images of half fibers of $\pi$ are labelled $P_1\dots P_6$, so that, for example, $\theta \circ \pi\inv(\gamma(0))=P_1\cup P_2$ and $\gamma(0)=P_1\cap P_2$. We will abuse notation and use $P_1$ to refer to both a half leaf $\F^u \cap \Sigmapunc$ and its $\theta$-preimage in $\pi^{-1}(\gamma(0))$, and similarly for $P_2\dots P_6$.
			
			Let $R_1$, $R_2$, and $R_3$ be rectangles bounded on the east and west sides by the $P_i$'s and the prongs of $\F^u \cap \Sigmapunc$  as shown shaded in \cref{fig:monodromy}. We might not be able to make the rectangles arbitrarily long in the north south direction due to hitting other singularities. However, if $\gamma(0)$, $\gamma(1/2)$, and $\gamma(1)$ hug very close to the singularity, these rectangles can be chosen to be as long in the north-south direction as desired.

			Restrict attention to points in the $P_i$'s such that the $\theta$-images of their parallel transports along $\gamma$ stay inside $R_1 \cup R_2 \cup R_3$. Call the set of such points $N$. For example, $N\cap (P_1\cup P_2)$ is an interval containing $\gamma(0)$.
			
			Now $\gamma|_{[0,1/2]}$ decomposes as a type 2 curve\plabel{fix2:28:2} in $R_1$, the inverse of a type 3a curve crossing the prong, and a type 2 curve in $R_2$. Therefore, $\Jpar_{\gamma|_{[0,1/2]}}$ maps $P_2\cap N$ to $P_3\cap N$ by an isometry and maps $P_1\cap N$ to $P_4\cap N$ by a stretch by a factor $\lambda^{-m_iq_i/p_i}$. (It is worth checking in \cref{fig:monodromy} that the prong is indeed crossed from east to west, so $\gamma$ follows the inverse of a type 3a curve and the corresponding dilation factor has negative exponent.)\plabel{fix2:28:4} Similarly, $\gamma|_{[1/2,1]}$ decomposes as a type 2 curve in $R_2$, a type 3b curve, and a type 2 curve in $R_3$. Thus, $\gamma\mid_{[1/2,1]}$ maps $P_4\cap N$ to $P_5\cap N$ by an isometry and $P_3\cap N$ to $P_6\cap N$ by a stretch by a factor $\lambda^{-m_iq_i/p_i}$.\plabel{fix2:28:5} The composite $\Jpar_\gamma$ maps $(P_1\cup P_2)\cap N$ to $(P_5\cup P_6)\cap N$ by a dilation of $\lambda^{-q_im_i/p_i}$. This justifies and makes precise the claim made in the first paragraph of the proof. 
			
			Finally, if we take $\gamma(0), \gamma(1)$, and $\gamma(1/2)$ to hug close to the singularity, then $R_1,R_2$ and $R_3$ may be made as long as needed, and in turn $N$ may be made as large as desired. By the accounting of dilation factors from the first paragraph, the monodromy of $\gamma$ is the identity on as large a subspace of $\pi^{-1}(p)$ as desired.

		\end{proof}
		
	\end{subsection}
	
	\begin{subsection}{Blowup time for parallel sections}\label{sec:blowup}
		Let $\gamma\colon [0,\infty)\to\Sigmapunc$ parameterize\plabel{fix3:18} an eastward ray in a leaf of $\Sigmapunc \cap \F^s$ by arclength.\plabel{fix2:41:1} (Here, we are conflating $\Sigmapunc$ with its lift to $Z$, and will continue to do so without comment. So, for example, $\gamma(0)$ might be of the form $p_{\xleft}$ or $p_{\xright}$ for some $p$ on a prong.)\plabel{fix2:29} Given a point $x\in \pi\inv(\gamma(0))$, recall from \cref{sec:connectionintro} that one may attempt to contruct a parallel section $s_x\colon [0,\infty) \to E_\pi$ with $s_x(0)=x$. However, $s_x$ typically blows up in finite time and can only be defined over $[0,t_{\max{}}(x))$ for some $t_{\max{}}(x) > 0$. In this section, we show that $t_{\max{}}$ is locally a homeomorphism from $\pi^{-1}(\gamma(0))$ to $\R$.

		\begin{rmk}\label{rmk:abuse}
			In what follows, we will abuse notation and treat $s_x$ as a real-valued function of $t$ whose value is the signed north-south distance in $\widetilde{\Sigmapunc}$ from $\gamma(t)$ to $\theta\circ s_x(t)$. Similarly, we will think of $x$ as a real number. 
		\end{rmk}\plabel{fix2:30}
		
		Recall that we defined $\lambda$ so that $\lambda > 1$. We also assumed that $p_i$ all have the same sign. Without loss of generality, we assume $p_i\geq 0$. Let $\alpha_i=\lambda^{m_iq_i/p_i}$. Note that $\alpha_i\geq 1$. Let $\alpha_{\max{}}=\max_i \lambda^{m_iq_i/p_i}$. If $\alpha_{\max{}}=1$, then $\MM$ is fibered and there is no blowup of sections. So we assume that $\alpha_{\max{}}>1$. Now we can define $\alpha_{min{}}=\min\set{\lambda^{m_i q_i/p_i} \mid q_i > 0}$.  We call a singularity \emph{magnifying} if the associated coefficient satisfies $\alpha_i>1$. A \emph{ragged rectangle} is the region in $\widetilde{\Sigmapunc}$ swept out by a interval in $\F^{s}\cap\Sigmapunc$ under the time $t$ northward flow along $\F^u\cap\Sigmapunc$ (ie the map that sends each point $t$ units due north). It looks like a rectangle with some vertical slits cut into the northern edge. 
		
		It is best to visualize all our constructions via the developing map $D\colon \widetilde{\Sigmapunc} \to \R^2$ which is locally an isometry.\plabel{fix2:31} We can arrange that $D(\widetilde{\F^s\cap \Sigmapunc})$ and $D(\widetilde{\F^u\cap \Sigmapunc})$ are the horizontal, resp. vertical foliations of $\R^2$. Thus, the $\theta$-images of fibers of $\pi$ also correspond to vertical lines in this picture. Ragged rectangles project via $D$ to honest rectangles. We can also arrange that the image of $\gamma$ is the positive $x$ axis.
		
		Given $x \in \pi\inv(\gamma(0))$, the section $s_x(t)$ may be constructed geometrically. If $x>0$, push the vertical segment $[\gamma(0),\theta(x)]$ eastward along $\gamma$. The endpoint sweeps out the $\theta$ image of the section $s_x(t)$. See \cref{fig:push}. Whenever the segment hits a singularity in orbit $i$ at distance $d$ from $\gamma(t)$ with $0 < d < s_x(t)$, the segment continues on the other side with length $d + \alpha_i(s_x(t)-d)$. If $x < 0$, then the same procedure works except that the new length of the segment is $d+(1/\alpha_i)(s_x(t)-d)$. 
		
		\begin{figure}[ht]
			\centering
			\def\svgwidth{90mm}
			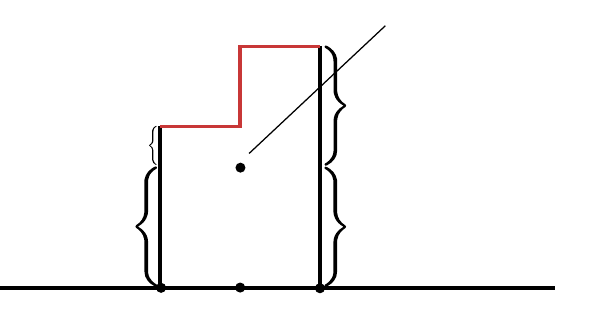
			\caption{Constructing $s_x(t)$ by pushing a fiber of $\pi$ past a singularity, as seen in the image of the developing map.}\label{fig:push}
		\end{figure}

		When $x<0$, the section $s_x(t)$ stays bounded for all time because $\gamma$ can be expressed as a composite of type 2 and type 3b curves, neither of which increases $|s_x(t)|$. On the other hand, when $x > 0$, $s_x(t)$ grows with $t$ and possibly blows up. \cref{lem:ergodic} below guarantees that we can expect many singularities to be encountered during this procedure. 
		
		\begin{figure}[ht]
			\centering
			\def\svgwidth{120mm}
\begingroup%
  \makeatletter%
  \providecommand\color[2][]{%
    \errmessage{(Inkscape) Color is used for the text in Inkscape, but the package 'color.sty' is not loaded}%
    \renewcommand\color[2][]{}%
  }%
  \providecommand\transparent[1]{%
    \errmessage{(Inkscape) Transparency is used (non-zero) for the text in Inkscape, but the package 'transparent.sty' is not loaded}%
    \renewcommand\transparent[1]{}%
  }%
  \providecommand\rotatebox[2]{#2}%
  \newcommand*\fsize{\dimexpr\f@size pt\relax}%
  \newcommand*\lineheight[1]{\fontsize{\fsize}{#1\fsize}\selectfont}%
  \ifx\svgwidth\undefined%
    \setlength{\unitlength}{1017.96948242bp}%
    \ifx\svgscale\undefined%
      \relax%
    \else%
      \setlength{\unitlength}{\unitlength * \real{\svgscale}}%
    \fi%
  \else%
    \setlength{\unitlength}{\svgwidth}%
  \fi%
  \global\let\svgwidth\undefined%
  \global\let\svgscale\undefined%
  \makeatother%
  \begin{picture}(1,0.54337835)%
    \lineheight{1}%
    \setlength\tabcolsep{0pt}%
    \put(0,0){\includegraphics[width=\unitlength,page=1]{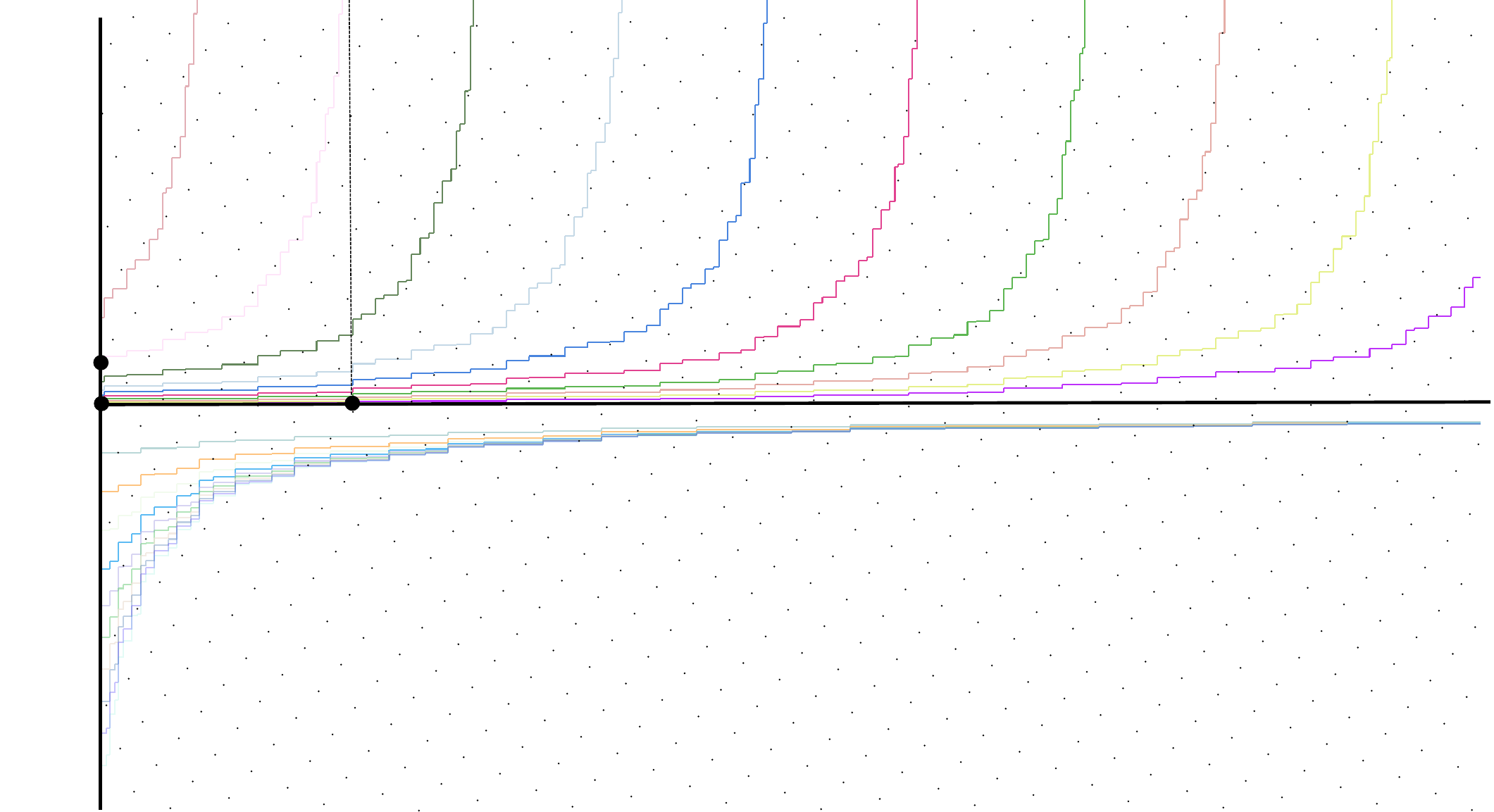}}%
    \put(0.05957868,0.29258818){\color[rgb]{0,0,0}\makebox(0,0)[rt]{\lineheight{1.25}\smash{\begin{tabular}[t]{r}$\theta(x)$\end{tabular}}}}%
    \put(0.05966596,0.26409412){\color[rgb]{0,0,0}\makebox(0,0)[rt]{\lineheight{1.25}\smash{\begin{tabular}[t]{r}$\gamma(0)$\end{tabular}}}}%
    \put(0.05294292,0.35994975){\color[rgb]{0,0,0}\rotatebox{90}{\makebox(0,0)[lt]{\lineheight{1.25}\smash{\begin{tabular}[t]{l}$\theta\circ \pi^{-1}(\gamma(0))$\end{tabular}}}}}%
    \put(0.22569907,0.24055668){\color[rgb]{0,0,0}\makebox(0,0)[lt]{\lineheight{1.25}\smash{\begin{tabular}[t]{l}$\gamma(t_{\rm max}(x))$\end{tabular}}}}%
  \end{picture}%
\endgroup%

			\caption{In this example, $\Sigmapunc=T^2$, $\phi$ is the monodromy of the figure eight knot, and the surgery coefficient is $(p_1;q_1)=(5,1)$. This picture shows $D(\widetilde{\Sigmapunc})\cong \R^2$ with the 2-pronged singularities of $\phi$ drawn as dots. Objects in $\Sigma$ have been lifted to $\widetilde{\Sigmapunc}$, and then projected to $\R^2$ via $D$; for convenience in labelling the picture, we identify objects with their images under this procedure. Sections $s_x(t)$ are shown in colours for various choices of $x$. These sections have been projected to $\Sigmapunc$ by $\theta$, then lifted to $\widetilde \Sigmapunc$, and finally projected to $D(\widetilde \Sigmapunc)$. Whenever $x\in\pi\inv(\gamma(0))$ satisfies $x<0$, the section $s_x(t)$ exists for all time. When $x>0$, $s_x(t)$ blows up in finite time.}\label{fig:blowup}
		\end{figure}
		\begin{lem}\label{lem:ergodic}
			There exists a positive constant $\kappa$ such that for every $\eps> 0$, there exists a large enough $A_\eps$ so that every ragged rectangle of area $A>A_\eps$ has number of magnifying singularities in the range $[(\kappa-\eps)A, (\kappa+\eps)A]$.
		\end{lem}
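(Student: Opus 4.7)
The plan is to derive the equidistribution from unique ergodicity of the horizontal and vertical straight-line flows on the singular flat surface $(\Sigma, g)$, a classical consequence of $\phi$ being pseudo-Anosov. I would take $\kappa = N_{\mathrm{mag}}/\Area(\Sigma, g)$, where $N_{\mathrm{mag}} = \sum_{i : \alpha_i > 1} m_i$ counts the magnifying singularities on $\Sigma$.

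First I would smooth the atomic count. For each magnifying singularity $p \in \Sigma$ and small $\delta > 0$, let $B_\delta(p)$ be the open flat-metric disc of radius $\delta$, set $U_\delta = \bigsqcup_p B_\delta(p)$ (so $\Area(U_\delta) = N_{\mathrm{mag}} \pi \delta^2$), and let $\widetilde{U_\delta} \subset \widetilde{\Sigmapunc}$ be its lift. For a ragged rectangle $R$ with horizontal extent $L$ and vertical extent $t$, provided $\delta$ is much smaller than $\min(L, t)$, the number of magnifying singularity lifts inside $R$ equals $\Area(R \cap \widetilde{U_\delta})/(\pi\delta^2)$ up to an additive error $O(\delta(L+t))$ accounting for singularities whose $\delta$-discs straddle $\partial R$.

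Next I would estimate $\Area(R \cap \widetilde{U_\delta})$ by ergodic averaging. Writing $R$ as a continuum of horizontal cross-sections $\gamma_y$ at heights $y \in [0, t]$, the area decomposes as $\Area(R \cap \widetilde{U_\delta}) = \int_0^t \mathrm{length}(\gamma_y \cap \widetilde{U_\delta})\, dy$. By unique ergodicity of the horizontal flow, applied to a continuous approximation of $\mathbf{1}_{U_\delta}$, for any $\eps' > 0$ there is a threshold $L_{\eps', \delta}$ such that every horizontal segment of length at least $L_{\eps', \delta}$ satisfies
\[
\mathrm{length}(\gamma \cap \widetilde{U_\delta}) = \mathrm{length}(\gamma) \cdot \tfrac{\Area(U_\delta)}{\Area(\Sigma)} \cdot (1 + O(\eps'))
\]
uniformly in the starting point. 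Integrating over $y$ gives $\Area(R \cap \widetilde{U_\delta}) = \kappa \pi \delta^2 \cdot A \cdot (1 + O(\eps'))$ as long as the cross-sections $\gamma_y$ are predominantly longer than $L_{\eps', \delta}$; the symmetric argument via vertical unique ergodicity handles the dual case. Since $Lt \geq A$ (up to a bounded factor accounting for branching at singularities), at least one of $L, t$ is $\geq c\sqrt A$, so one of the two ergodic thresholds is met once $A$ is large.

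The hardest part will be balancing the two errors across all aspect ratios: for highly eccentric rectangles $\min(L, t)$ can be small, forcing $\delta$ small and the ergodic threshold $L_{\eps', \delta}$ correspondingly large. I would resolve this by letting $\delta$ shrink slowly with $A$ (for instance $\delta = A^{-1/4}$) and invoking a quantitative form of unique ergodicity for pseudo-Anosov foliations in which the threshold $L_{\eps', \delta}$ grows only polynomially in $1/\delta$. The ragged structure of $R$---the northward slits issuing from singularities encountered by the flow---contributes only a lower-order correction, since the slits' contribution to the singularity count is itself controlled by the same equidistribution argument applied to the slit regions separately.
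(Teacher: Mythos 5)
Your overall strategy---mollify the atomic singularity count and appeal to unique ergodicity of the translation flow on the flat surface carrying the pseudo-Anosov structure, with $\kappa$ the density of magnifying singularities per unit flat area---is the right circle of ideas and matches the paper's. However, you have a genuine gap in the aspect-ratio handling, and the way you propose to fill it is exactly where the paper's proof does something different and much simpler.

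Your difficulty with eccentric ragged rectangles (small $\min(L,t)$ forcing $\delta$ small and the ergodic threshold $L_{\eps',\delta}$ large) is real, and your proposed resolution---let $\delta$ shrink like $A^{-1/4}$ and invoke a quantitative unique ergodicity statement in which the threshold grows only polynomially in $1/\delta$---is left as an unverified appeal. You do not identify which quantitative result you mean, whether the polynomial rate is good enough that $L_{\eps',\delta}\lesssim\sqrt A$ when $\delta\sim A^{-1/4}$, or how the constants depend on the regularity of the mollified indicator. As written, the ``hardest part'' paragraph is a promissory note, not a proof.

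The paper avoids all of this with a renormalization trick that you do not use: apply a suitable power $\phi^k$ to the ragged rectangle. Since $\phi$ scales the horizontal direction by $\lambda^{\pm 1}$ and the vertical direction by $\lambda^{\mp 1}$, preserves area, and permutes the singularities (preserving their magnification type), one can always arrange that the image ragged rectangle has width in a fixed interval $[c,\lambda c]$ and hence height comparable to $A$, with the same count of magnifying singularities. Once the width is bounded, every vertical cross-section has length comparable to $A$, so a fixed small $\delta$ (independent of $A$, chosen only in terms of $\eps$) suffices to control the boundary error, and plain qualitative unique ergodicity of the vertical translation flow gives the uniform equidistribution once $A$ is large. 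Unique ergodicity itself comes from Masur's criterion, since a pseudo-Anosov map corresponds to a closed orbit in moduli space (hence one that stays in a compact set). No quantitative ergodic input is needed. If you insert this renormalization step before your cross-section decomposition, your argument closes cleanly and you can drop the $\delta = A^{-1/4}$ device and the quantitative appeal entirely.
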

		\begin{proof}
			First, we may use the action of $\phi$ to turn any ragged rectangle of area $A$ into a new ragged rectangle of width $O(1)$ and height $O(A)$ having the same number of magnifying singularities. Now the lemma follows from the ergodicity of the translation flow on flat surfaces. Masur's criterion states that the translation flow on a flat surface is ergodic whenever the corresponding flow on Teichm\"{u}ller space given by multiplying the metric by $\begin{pmatrix} e^{t} & 0 \\ 0 & e^{-t} \end{pmatrix}$ stays in some compact set \cite[Sec. 3.7]{zorich_flat_2006}. A pseudo-Anosov map corresponds with a closed orbit under this flow. Therefore, Masur's criterion is fulfilled.
		\end{proof}
		
		We now define $A_*$ to be a constant large enough that every ragged rectangle of area $A_*$ contains a magnifying singularity. We also remind the reader that in what follows, we are using the identifications introduced in \cref{rmk:abuse}.\plabel{fix3:20}
		
		\begin{lem}\label{lem:fastexplode}\plabel{fix:42}
			There exists a constant $C$ independent of $\gamma$ such that whenever $x > S > 0$, $s_x(t)$ blows up in time $t_{\max{}}(x) < C/S$.
		\end{lem}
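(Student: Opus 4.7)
The strategy is to use the geometric construction of $s_x(t)$ (pushing a vertical segment of height $x$ eastward along $\gamma$) together with \cref{lem:ergodic} to show that $L(t) := s_x(t)$ undergoes repeated multiplicative growth, forcing blowup in time $O(1/S)$. Since $x > 0$ and every $\alpha_i \geq 1$, $L(t)$ is nondecreasing, with $L(0) = x > S$; blowup of $s_x$ occurs exactly when $L(t) \to \infty$.

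The core estimate is that whenever the swept segment encounters a magnifying singularity at height $q \leq L(t)/2$, the length jumps from $L$ to $L + (\alpha_i - 1)(L - q) \geq \beta L$, where $\beta := 1 + (\alpha_{\min} - 1)/2 > 1$ depends only on the pseudo-Anosov and surgery data. Define times inductively by $t_0 = 0$ and $t_{k+1}$ the first $t > t_k$ at which such a singularity (magnifying, lying in the lower half of the current segment) is encountered. Suppose for contradiction that $t_{k+1} - t_k > 2A_*/L(t_k)$. Using that $L$ is nondecreasing, the region swept by the lower half of the segment during $[t_k,\, t_k + 2A_*/L(t_k)]$ is a ragged rectangle (in the dual sense: a north-south interval pushed eastward along $\F^s$) of area $\geq A_*$ containing no magnifying singularity, contradicting the defining property of $A_*$. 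Hence $t_{k+1} - t_k \leq 2A_*/L(t_k)$ and, by iterating the growth estimate, $L(t_k) \geq S \beta^k$.

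Summing the resulting geometric series gives $t_{\max{}}(x) \leq \lim_k t_k \leq \sum_{k=0}^{\infty} 2A_*/(S\beta^k) = 2A_*\beta/[(\beta-1)S] =: C/S$ with $C$ independent of $\gamma$ and $S$; since $L(t_k) \to \infty$, the section indeed blows up by this time. The main technical obstacle is justifying the ergodic count for the ``transposed'' ragged rectangles that appear here (north-south fiber segments swept along $\F^s$ rather than east-west leaf segments swept along $\F^u$), but this is legitimate because Masur's criterion gives ergodicity of the translation flow in both directions, so \cref{lem:ergodic} and the constant $A_*$ apply equally well to either orientation.
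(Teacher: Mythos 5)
Your proof is correct and follows essentially the same strategy as the paper's: use the constant $A_*$ (which guarantees a magnifying singularity in any ragged rectangle of that area) to show that the segment of height roughly $L$ encounters a magnifying singularity within the lower half in time $O(1/L)$, producing a multiplicative jump in $L$, and then sum the resulting geometric series to bound $t_{\max}$. Your constant $C = 2A_*\beta/(\beta-1)$ with $\beta = (1+\alpha_{\min})/2$ simplifies to the same expression as the paper's $2A_*\bigl(1 - \tfrac{2}{1+\alpha_{\min}}\bigr)^{-1}$.

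One remark: the detour through a ``transposed'' ragged rectangle, and the resulting need to re-invoke Masur's criterion in the perpendicular direction, is unnecessary. The region you need is the set of points at height at most $L(t_k)/2$ above the horizontal arc $\gamma\bigl([t_k,\,t_k + 2A_*/L(t_k)]\bigr)$, and this is already a ragged rectangle exactly in the sense the paper defines (a horizontal interval in $\F^s$, namely that sub-arc of $\gamma$, flowed north a distance $L(t_k)/2$). Any magnifying singularity in it sits at some $(t',q)$ with $t' \in [t_k, t_k + 2A_*/L(t_k)]$ and $q < L(t_k)/2 \leq L(t')/2$, so the growing segment sweeps over it in the lower half. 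Thus the paper applies $A_*$ directly without needing a transposed version. Your argument as written is still valid since the ergodic lemma does hold in both orientations, but the extra justification could have been avoided.
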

		\begin{proof}
			By \cref{lem:ergodic}, a prong from a magnifying singularity intersects the image of $\gamma$ in $\Sigmapunc$ at north-south distance $<\frac S 2$ in time $t<A_*\cdot \frac 2 S$. Therefore,
			
			\begin{align}
				s_x\paren{\frac {2A_*} S} &> \frac S 2 + \alpha_{\min{}} \frac S 2 \\
				&=\frac{1+\alpha_{\min{}}}{2}S
			\end{align}\plabel{fix2:33}
			Repeating the argument to find more nearby singularities, we find that
			\begin{equation}
				s_x\paren{\frac {2A_*}{S}\sum_{i=0}^{n-1} \paren{\frac {1+\alpha_{\min{}}} 2}^{-i}} > \paren{\frac {1+\alpha_{\min{}}} 2}^{n}S
			\end{equation}\plabel{fix2:34}
			
			as long as the left side is defined. Taking $n\to\infty$, we find that
			
			\begin{equation}
				t_{\max{}}(x) < \frac {2 A_*}{S} \paren{1-\frac 2 {1+\alpha_{\min{}}}}\inv.
			\end{equation}
			The right side is of the form $C/S$ as desired.
		\end{proof}

		\begin{lem}\label{lem:slowexplode}
			There exists a constant $c$ independent of $\gamma$ such that whenever $0\leq x < S$, $s_x(t)$ exists for time $c/S$.
		\end{lem}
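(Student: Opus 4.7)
The plan is a bootstrap: show that for appropriately chosen universal constants $B$ and $c$ (depending only on $\Sigma$, $\phi$, and the $\alpha_i$'s), one has $s_x(t) \leq BS$ throughout $[0, c/S]$. Since boundedness on a time interval means the parallel section is defined on that interval, this yields $\tmax(x) \geq c/S$, as required.

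The argument requires two ingredients. The first is an \emph{amplification bound}: each time the pushed segment crosses a prong of a magnifying singularity at height $q$, its new length is $q + \alpha_i(s_x - q) \leq \alpha_{\max{}} s_x$, so after $N(t)$ encounters starting from $s_x(0) = x < S$ the inequality $s_x(t) \leq \alpha_{\max{}}^{N(t)} S$ holds. The second is a \emph{density bound} of the form $N(A) \leq K_1 A + K_2$ for the count of magnifying singularities in a ragged rectangle of area $A$, valid for universal constants $K_1, K_2$. For $A \geq A_\eps$ this is immediate from \cref{lem:ergodic}; for $A < A_\eps$ I would observe that preimages of singularities in $\widetilde{\Sigmapunc}$ are uniformly separated in the singular Euclidean metric (inherited from the finite singularity set of the compact surface $\Sigma$), so the count in such a small ragged rectangle is bounded by a fixed constant that can be absorbed into $K_2$.

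With both bounds in hand, assume for contradiction that $t^* \leq c/S$ is the first time at which $s_x$ reaches $BS$. Since $s_x$ is non-decreasing and stays strictly below $BS$ on $[0, t^*)$, the swept area satisfies
\[
A(t^*) = \int_0^{t^*} s_x(\tau)\,d\tau \leq BS \cdot \frac{c}{S} = Bc,
\]
so the density bound yields $N(t^*) \leq K_1 Bc + K_2$. Combining with the amplification bound and $s_x(t^{*+}) \geq BS$,
\[
B \leq \alpha_{\max{}}^{N(t^*)} \leq \alpha_{\max{}}^{K_1 Bc + K_2},
\]
which rearranges to $K_1 B c \geq \log_{\alpha_{\max{}}} B - K_2$. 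Choosing $B$ large enough that $\log_{\alpha_{\max{}}} B > K_2$ — for example $B := e \cdot \alpha_{\max{}}^{K_2}$ — makes the right side strictly positive, so any $c$ strictly less than the threshold $c_0 := (\log_{\alpha_{\max{}}} B - K_2)/(K_1 B) > 0$ forces the contradiction.

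The main obstacle I anticipate is establishing the density bound uniformly, especially in the small-area regime where \cref{lem:ergodic} is silent; this will require exploiting the discrete, uniformly separated nature of the singularity set in $\widetilde{\Sigmapunc}$. Once $K_1$ and $K_2$ are secured, the bootstrap above is mechanical; the crucial design choice is allowing $s_x$ room to grow up to a large multiple $BS$ rather than insisting on $s_x \leq 2S$, which is precisely what absorbs the additive constant $K_2$ and guarantees $c_0 > 0$.
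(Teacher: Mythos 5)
Your bootstrap structure is a genuinely different route from the paper's. The paper does not prove a linear density bound $N(A)\leq K_1A+K_2$ over all areas and then run a contradiction; instead it fixes a single area $A_\kappa$ from \cref{lem:ergodic} and builds one ragged rectangle of exactly that area, with height $H=\alpha_{\max{}}^{2\kappa A_\kappa}S$ and width $c/S=A_\kappa/H$. Since that rectangle contains fewer than $2\kappa A_\kappa$ magnifying singularities and $s_x$ starts below $S$ and grows by at most $\alpha_{\max{}}$ per crossing, the section can never reach height $H$ within the rectangle, hence survives for time $c/S$. This direct ``trap the section in a tall box'' argument needs only the fixed-area case of \cref{lem:ergodic}; no uniform-in-$A$ density statement is required, and in particular no additive constant $K_2$ appears.

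The gap in your version is precisely the step you flagged: the claim that for small $A$ the count in a ragged rectangle is bounded by a constant because singularities are ``uniformly separated.'' Uniform separation bounds the count in a region of bounded \emph{diameter}, but a ragged rectangle of small area can still be arbitrarily long and thin (and in your bootstrap it is: width $\lesssim c/S$, height $BS$, so the aspect ratio diverges as $S\to\infty$). A thin strip of fixed area can contain an unbounded number of $\delta$-separated points if they line up roughly along the strip, so uniform separation alone does not yield $N(A)\leq K_1A+K_2$. The correct repair is the normalization device already used in the proof of \cref{lem:ergodic}: apply a power of $\phi$ to bring the rectangle to width $O(1)$ while preserving the singularity count, after which small area does force bounded diameter and hence a bounded count. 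With that patch your bootstrap goes through, but as written the density bound for the small-area regime is not established, and the paper's single-rectangle argument is the cleaner way to avoid needing it at all.
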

		\begin{proof}
			By \cref{lem:ergodic}, we can choose a constant $A_{\kappa}$ so that every ragged rectangle of area $A>A_{\kappa}$ has fewer than $2\kappa A$ singularities. Choose $c={A_{\kappa}}/\paren{(\alpha_{\max{}})^{2\kappa A_{\kappa}}}$. Choose any $S>0$ and any $0\leq x < S$.\plabel{fix2:35}
			
			Consider a ragged rectangle $R$ whose southwest corner lies at $\gamma(0)$ and which has an area $A_\kappa$, with height $(\alpha_{\max{}})^{2\kappa A_{\kappa}} S$ and width $c/S={A_{\kappa}}/\paren{(\alpha_{\max{}})^{2\kappa A_{\kappa}}S}$. This ragged rectangle has fewer than $2\kappa A_\kappa$ singularities. Since $|s_x(t)|$ grows by at most $\alpha_{\max}$ each time it encounters a singularity, $\theta\circ s_x(t)$ does not grow enough times to exit through the top of $R$. Therefore, $s_x(t)$ exists for time at least the width of the ragged rectangle, $c/S$.
		\end{proof}
		\begin{prop}\label{prop:surjective}
			$t_{\max}\colon \{x\in \pi\inv(\gamma(0)) \mid x>0\} \to (0,\infty)$ is surjective.
		\end{prop}\plabel{fix2:37}
		\begin{proof}
			We first show that the image of $\tmax$ is dense. For any $t_0$ and any $\eps>0$, \cref{lem:fastexplode} guarantees that if $x\in \pi\inv(\gamma(t_0))$ is large enough, the parallel section over $\gamma$ passing through $x$ will blow up before $t=t_0+\eps$. A section can always be extended backwards in time because as one travels west, the section is always decreasing and positive.\plabel{fix2:36} See \cref{fig:blowup}. Thus, we have found a section defined at $t=0$ and which blows up between time $t_0$ and time $t_0+\eps$. Our choices of $t_0$ and $\eps$ were arbitrary, so $\tmax$ has dense image.
			
			Since $\tmax$ is non-decreasing and has dense image, it is surjective.
		\end{proof}

		\begin{prop}\label{prop:injective}
			$t_{\max}\colon \{x\in \pi\inv(\gamma(0))\mid x>0\} \to (0,\infty)$ is injective.
		\end{prop}
		\begin{proof}
			Choose $x,y\in \pi\inv(\gamma(0))$. Assume $0<x<y$. We want to show that $s_y$ explodes strictly before $s_x$.
			\begin{enumerate}[label=\textbf{Case~\arabic*}]
				\item $s_y(t)/s_x(t)$ is unbounded. Choose $t$ so that $s_y(t)/s_x(t)>C/c$, where $C$ and $c$ are the constants from \cref{lem:fastexplode} and \cref{lem:slowexplode}. By \cref{lem:fastexplode} with $S=s_y(t)$, $s_y$ blows up before time $t+C/s_y(t)$. By \cref{lem:slowexplode} with $S=s_y(t)c/C$, $s_x$ exists at time $t+C/s_y(t)$. Therefore, $s_y$ explodes strictly before $s_x$.\plabel{fix2:38}
				\item $s_y(t)/s_x(t)$ stays bounded. Let $\set{t_j}$ be the sequence of times at which $s_x$ grows, and let $b_j$ be the height of the singularity encountered at time $t_j$. Note that $0 < b_j < s_x(t_j)$. Let $t_j-\eps$ be a time just before crossing the singularity and $t_j+\eps$ a time just after. This helps us avoid talking about $s_x(t_j)$ which is problematic since $s_x$ is discontinuous at $t_j$. We have
				\begin{align*}
					\frac{s_y(t_j+\eps)-s_x(t_j+\eps)}{s_x(t_j+\eps)} &= \frac{\alpha_{t_j} \cdot (s_y(t_j-\eps)-s_x(t_j-\eps))} {\alpha_{t_j}\cdot(s_x(t_j-\eps)-b_j)+b_j}\\
					&=\frac{s_y(t_j-\eps)-s_x(t_j-\eps)} {s_x(t_j-\eps)-b_j(1-1/\alpha_{t_j}) }\\
					&=\frac 1 {1-(1-1/\alpha_{t_j})\cdot b_j/s_x(t_j-\eps)}\cdot\frac{s_y(t_j-\eps)-s_x(t_j-\eps)} {s_x(t_j-\eps)}\\
				\end{align*}\plabel{fix5:16}
				\plabel{fix3:22} where $\alpha_{t_j}$ is the magnification factor at the singularity encountered at time $t_j$. Since $\alpha_{t_j}>1$, we have $0 <  1- 1 / \alpha_{t_j} < 1$.  Since $\alpha_{t_j}$ takes on at most finitely many values, $1-1/\alpha_{t_j}$ is bounded away from 0. If $b_j/s_x(t_j-\eps)$ is also bounded away from 0\plabel{fix3:23}, then $(s_y(t)-s_x(t))/s_x(t)$ grows by a factor bounded away from 1. Since we assumed that $s_y(t)/s_x(t)$ remains bounded, we must have $\lim_{j\to\infty} b_j/s_x(t_j-\eps)=0$.\plabel{fix2:39}
				
				Let $N\in \N$ and $\eps > 0$ be constants to be determined. Choose $n$ large enough that $b_i/s_x(t_i) < \eps$ for all $i>n$. By \cref{lem:fastexplode}, there exists $m>n$ such that 
				\begin{equation}\label{eqn:compare}
					(\alpha_{\max{}})^{N-1} s_x(t_n) < s_x(t_m) < (\alpha_{\max{}})^{N+1} s_x(t_n)
				\end{equation}
				and
				\begin{equation}
					t_m - t_n < \frac C {s_x(t_n)}
				\end{equation}
				
				Now consider the ragged rectangle $R$ with base $\gamma([t_n,t_m])$ and height $\eps s_x(t_m)$. We have that
				\begin{align}
					\Area(R) &= \eps s_x(t_m)(t_m-t_n)\\
					&< \eps s_x(t_n) \frac C {s_x(t_n)} (\alpha_{\max{}})^{N+1}\\
					&= C\eps (\alpha_{\max{}})^{N+1}.
				\end{align}
				
				Since $b_j/s_x(t_j) < \eps$ for every $j\in [n,m]$, the ragged rectangle contains every singularity which the section $s_x$ encounters between time $t_n$ and $t_m$. Taking into account \cref{eqn:compare} and the fact that $s_x$ can grow by a factor of at most $\alpha_{\max{}}$ each time it encounters a singularity, there must be at least $N$ singularities in $R$. Taking $N$ large enough and then $\eps$ small enough, we find a ragged rectangle of arbitrarily small area containing arbitrarily many singularities. Such a rectangle may be extended northward to a ragged rectangle of any desired larger area with at least as many singularities.\plabel{fix2:42} This contradicts \cref{lem:ergodic}.\qedhere
			\end{enumerate}
		\end{proof}
		
	\end{subsection}
	
	\begin{subsection}{The full connection \titletex{$\J$}}\label{sec:fullconnection}
		The uniqueness statements for blowup proven in the previous section suggest that we add a point at infinity to each fiber of $\pi$, obtaining an new bundle $S^1 \to E_\Pi \xrightarrow{\Pi} \Mpunc$. Via the embedding of fibers of $\pi$ into fibers of $\Pi$, $\Jpar$ is naturally a partial connection on $\Pi$ as well. We now define an honest connection $\J$ on $\Pi$ which extends $\Jpar$ (ie for all $\gamma\subset \Mpunc$, we have $\J_\gamma=\Jpar_\gamma$ on the domain of $\Jpar_\gamma$ in $\Pi^{-1}(\gamma(0))$).\plabel{fix3:24}
		
		On a type 1 curve $\gamma$, the domain of $\Jpar_\gamma$ is all of $\pi^{-1}(\gamma(0))$. We define $\J$ in the only sensible way which extends $\Jpar$, asking that $\J_\gamma$ sends the point at infinity in $\Pi^{-1}(\gamma(0))$ to the point at infinity in $\Pi^{-1}(\gamma(1))$.
				
		It remains to define $\J$ on curves oriented east-west, ie those in leaves of $ \Sigmapunc\cap \F^s$\plabel{fix2:43} which were considered in \cref{sec:blowup}. Such a curve may be represented as a (possibly infinite) concatenation of type 2 and type 3 curves.\plabel{fix3:25} Given $\gamma:[0,t]\to \Mpunc$ parameterizing an eastward interval in a leaf of $\Sigmapunc \cap \F^s$ by arclength\plabel{fix2:41:2} and a point $x \in \Pi\inv(\gamma(0))$, let $s_0$ be the parallel section over $\gamma$ traced out by parallel transport of $x$ along $\gamma$ using $\Jpar$. Suppose $s_0$ blows up towards $+\infty$ at time $\tmax(x)$. (When $x=\infty$, we say that $\tmax(x)=0$, and when $x<0$ we have $\tmax(x)=\infty$.) By \cref{prop:surjective} and \cref{prop:injective}, there exists a unique parallel section $s_1$ defined on $(\tmax(x),\infty)$\plabel{fix2:44} that blows up towards $-\infty$ as $t$ approaches $\tmax(x)$ from the right. We now define $\J$ in cases:
		\begin{equation}\label{eqn:jdef}
			\J_{\gamma}(x)=\begin{cases}
				s_0(t) &\text{if } t<\tmax(x)\\
				s_1(t) &\text{if } t>\tmax(x)\\
				\infty &\mbox{if } t=\tmax(x)
			\end{cases}
		\end{equation}
		See \cref{fig:sec1}.
			
			\begin{figure}[ht]
				\centering
				\def\svgwidth{0.9\linewidth}
\begingroup%
  \makeatletter%
  \providecommand\color[2][]{%
    \errmessage{(Inkscape) Color is used for the text in Inkscape, but the package 'color.sty' is not loaded}%
    \renewcommand\color[2][]{}%
  }%
  \providecommand\transparent[1]{%
    \errmessage{(Inkscape) Transparency is used (non-zero) for the text in Inkscape, but the package 'transparent.sty' is not loaded}%
    \renewcommand\transparent[1]{}%
  }%
  \providecommand\rotatebox[2]{#2}%
  \newcommand*\fsize{\dimexpr\f@size pt\relax}%
  \newcommand*\lineheight[1]{\fontsize{\fsize}{#1\fsize}\selectfont}%
  \ifx\svgwidth\undefined%
    \setlength{\unitlength}{508.91363525bp}%
    \ifx\svgscale\undefined%
      \relax%
    \else%
      \setlength{\unitlength}{\unitlength * \real{\svgscale}}%
    \fi%
  \else%
    \setlength{\unitlength}{\svgwidth}%
  \fi%
  \global\let\svgwidth\undefined%
  \global\let\svgscale\undefined%
  \makeatother%
  \begin{picture}(1,0.60003729)%
    \lineheight{1}%
    \setlength\tabcolsep{0pt}%
    \put(0,0){\includegraphics[width=\unitlength,page=1]{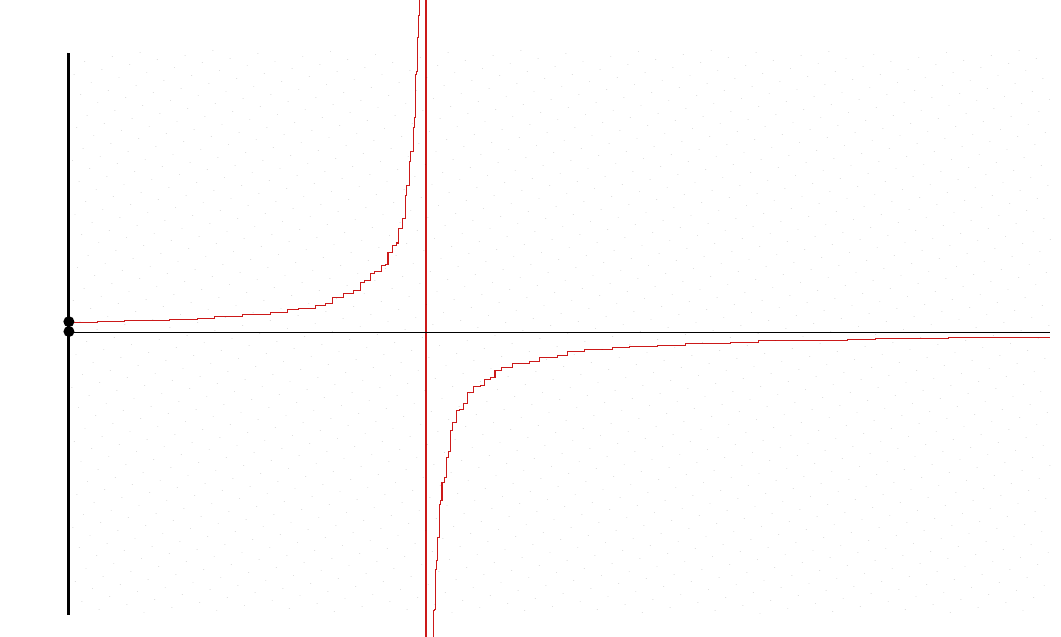}}%
    \put(0.37932393,0.4297229){\makebox(0,0)[rt]{\lineheight{1.25}\smash{\begin{tabular}[t]{r}$\theta\circ s_x(t)$\end{tabular}}}}%
    \put(0.05677032,0.2997535){\makebox(0,0)[rt]{\lineheight{1.25}\smash{\begin{tabular}[t]{r}$\theta(x)$\end{tabular}}}}%
    \put(0.0573098,0.27121164){\makebox(0,0)[rt]{\lineheight{1.25}\smash{\begin{tabular}[t]{r}$\gamma(0)$\end{tabular}}}}%
    \put(0.04979247,0.34668401){\rotatebox{90}{\makebox(0,0)[lt]{\lineheight{1.25}\smash{\begin{tabular}[t]{l}$\theta\circ\pi^{-1}(\gamma(0))$\end{tabular}}}}}%
  \end{picture}%
\endgroup%

				\caption{We continue the example from \cref{fig:blowup}. As before, we are drawing all objects lifted to $\widetilde \Sigmapunc$ and then projected via $D$ to $\R^2$. The figure shows the $\theta$-image of a parallel section for $\J$ over a curve $\gamma$ oriented east-west. The curve $\gamma$ is drawn as a horizontal black line in the figure. For any $t$, $\theta\circ \pi^{-1}(\gamma(t))$ would be a vertical line in the picture. The section is piecewise constant. It starts out equal to $s_0$, wraps around $\infty$ at $t=\tmax(x)$, and continues as $s_1$.}\label{fig:sec1}
			\end{figure}
			
			\begin{figure}[ht]
				\centering
				\def\svgwidth{0.9\linewidth}
\begingroup%
  \makeatletter%
  \providecommand\color[2][]{%
    \errmessage{(Inkscape) Color is used for the text in Inkscape, but the package 'color.sty' is not loaded}%
    \renewcommand\color[2][]{}%
  }%
  \providecommand\transparent[1]{%
    \errmessage{(Inkscape) Transparency is used (non-zero) for the text in Inkscape, but the package 'transparent.sty' is not loaded}%
    \renewcommand\transparent[1]{}%
  }%
  \providecommand\rotatebox[2]{#2}%
  \newcommand*\fsize{\dimexpr\f@size pt\relax}%
  \newcommand*\lineheight[1]{\fontsize{\fsize}{#1\fsize}\selectfont}%
  \ifx\svgwidth\undefined%
    \setlength{\unitlength}{509.62890625bp}%
    \ifx\svgscale\undefined%
      \relax%
    \else%
      \setlength{\unitlength}{\unitlength * \real{\svgscale}}%
    \fi%
  \else%
    \setlength{\unitlength}{\svgwidth}%
  \fi%
  \global\let\svgwidth\undefined%
  \global\let\svgscale\undefined%
  \makeatother%
  \begin{picture}(1,0.61478935)%
    \lineheight{1}%
    \setlength\tabcolsep{0pt}%
    \put(0,0){\includegraphics[width=\unitlength,page=1]{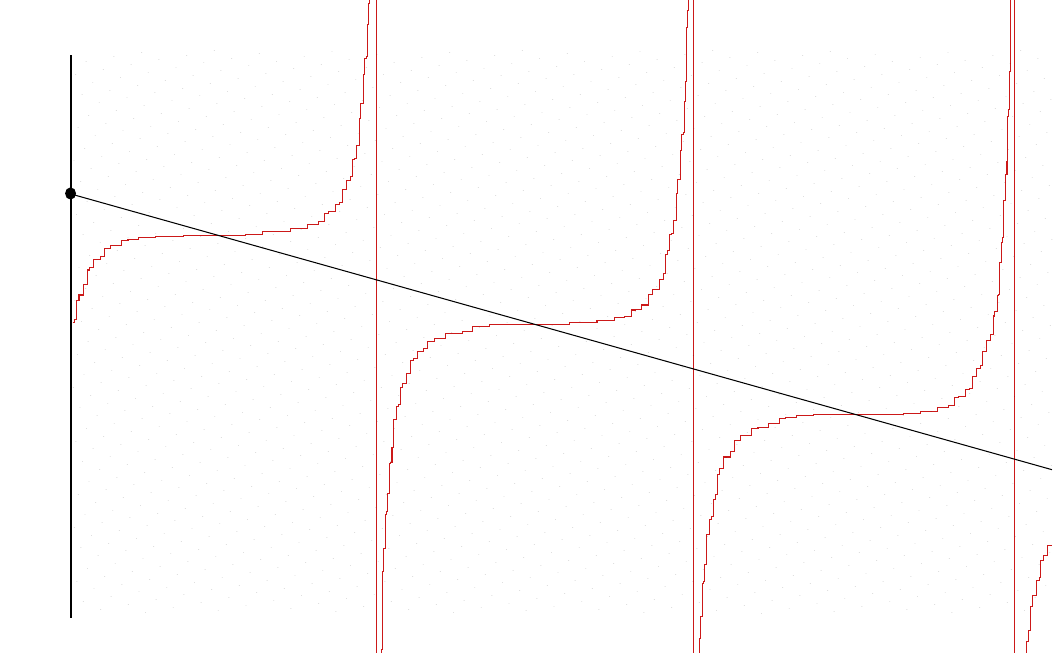}}%
    \put(0.33178436,0.47289095){\makebox(0,0)[rt]{\lineheight{1.25}\smash{\begin{tabular}[t]{r}$\theta\circ s_x(t)$\end{tabular}}}}%
    \put(0.05471157,0.42781415){\makebox(0,0)[rt]{\lineheight{1.25}\smash{\begin{tabular}[t]{r}$\gamma(0)$\end{tabular}}}}%
    \put(0.05718708,0.30563238){\makebox(0,0)[rt]{\lineheight{1.25}\smash{\begin{tabular}[t]{r}$\theta(x)$\end{tabular}}}}%
    \put(0.0517083,0.03464049){\rotatebox{90}{\makebox(0,0)[lt]{\lineheight{1.25}\smash{\begin{tabular}[t]{l}$\theta\circ\pi^{-1}(\gamma(0)$\end{tabular}}}}}%
    \put(0,0){\includegraphics[width=\unitlength,page=2]{fullsection2.pdf}}%
  \end{picture}%
\endgroup%

				\caption{The $\theta$-image of a parallel section for $\J$ over a curve $\gamma$ of negative slope relative to the stable foliation on $\Sigmapunc$. This picture is drawn with the same conventions as in \cref{fig:sec1}.  The section $s_x$ intersects the $\infty$-section infinitely many times. To see this, first homotope $\gamma$ in $\Sigmapunc$ to a curve which is piecewise southward or eastward with each straight segment having length at least $\eps$ for some $\eps>0$. This does not change $\theta\circ s_x$ since $\J$ is flat. Since $\gamma$ had negative slope and $\theta \circ s_x$ is non-decreasing, the north-south distance between $\theta\circ s_x(t)$ and $\gamma(t)$ increases to $C/\eps$ in finite time, where $C$ is the constant from \cref{lem:fastexplode}. By that lemma, $s_x$ blows up and crosses the $\infty$-section within the next eastward segment of $\gamma$. This process repeats indefinitely and produces infinitely many intersections between $s_x$ and the $\infty$-section.} \label{fig:sec2}
			\end{figure}		
		
		\begin{prop}\label{prop:flat2}
			$\J$ is flat.
		\end{prop}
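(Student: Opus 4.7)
Plan: The cleanest route is to realize $\J$ as parallel transport of a topological foliation $\mathcal{F}_\Pi$ of $E_\Pi$ transverse to the fibers of $\Pi$; once that is done, flatness is automatic since each leaf meets each fiber at most once, so every contractible loop lifts to a loop in a single leaf.

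By \cref{prop:flat1}, the partial connection $\Jpar$ integrates to a topological foliation $\mathcal{F}_\pi$ of $E_\pi$ transverse to the fibers of $\pi$, whose leaves are locally the maximal $\Jpar$-flat sections. The construction of $\J$ prescribes exactly how leaves of $\mathcal{F}_\pi$ are to be extended through the $\infty$-section: along any eastward curve $\gamma$ based at $p$, \cref{prop:surjective} and \cref{prop:injective} give a canonical bijection, via matching blowup times $\tmax$, between $\mathcal{F}_\pi$-leaves blowing up to $+\infty$ over $\gamma$ and $\mathcal{F}_\pi$-leaves emerging from $-\infty$ over $\gamma$. Gluing each such pair at the corresponding point of the $\infty$-section produces a subspace $\mathcal{F}_\Pi \subset E_\Pi$ refining $\mathcal{F}_\pi$, whose plaques are precisely the $\J$-flat sections described in the definition of $\J$ on east-west curves.

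The main technical step is to verify that $\mathcal{F}_\Pi$ is a genuine topological foliation, locally a product transverse to fibers, even across the $\infty$-section. Away from infinity this is \cref{prop:flat1}. Near a point $(p,\infty)$, local triviality in the east-west direction follows because $\tmax$ is a homeomorphism from the positive part of $\pi^{-1}(p)$ onto $(0,\infty)$ (so a small transversal near $(p,\infty)$ sweeps out a homeomorphic neighborhood in $\Mpunc$). Local triviality in the type~1 and type~3 directions must then be checked by explicit commutation: parallel transport along a type~1 arc acts on fibers as a dilation by $\lambda^\alpha$ in the induced Euclidean metric, and parallel transport along a type~3 arc acts piecewise as a dilation by $\lambda^{\pm m_iq_i/p_i}$; both of these commute with the $\tmax$-parameterized wrap-around at infinity, so the foliation structure extends continuously.

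Granting that $\mathcal{F}_\Pi$ is a foliation transverse to fibers, the conclusion is immediate: for any contractible loop $\gamma \subset \Mpunc$ and any $x \in \Pi^{-1}(\gamma(0))$, the lift of $\gamma$ to the leaf of $\mathcal{F}_\Pi$ through $x$ is a loop, since distinct fiber intersections would contradict transversality along a null-homotopy. Hence $\J_\gamma(x) = x$, and $\J$ is flat. The expected obstacle is the local product verification at infinity points: this is where the precise choice of dilation factors in the definition of $\Jpar$ along type~3 curves (designed in the first place so that \cref{prop:fillable} holds) pays off, by ensuring compatibility with the blowup-time homeomorphism at $\infty$.
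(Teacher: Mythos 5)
Your proposal is correct and, when unwound, is essentially the same proof as the paper's, just packaged in the foliation-of-total-space language rather than as a direct commutator check. The paper decomposes an arbitrary contractible loop into commutators of eastward, northward, and transverse short arcs; the only nontrivial commutator is transverse-vs-eastward, and the paper disposes of it with the observation that the entire construction of $\J_\gamma$ along eastward arcs is equivariant under the Solv dilations $\lambda^{\eps}$ (north--south) and $\lambda^{-\eps}$ (east--west). Your "$\mathcal{F}_\Pi$ is locally a product transverse to fibers" is equivalent to the vanishing of exactly these local commutators, and your assertion that type~1 dilations "commute with the $\tmax$-parameterized wrap-around at infinity" is the same equivariance statement viewed from the other side; you identify it but don't spell it out as cleanly as the paper does. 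One small caution: your closing line, that a contractible loop lifts to a loop in a leaf "since distinct fiber intersections would contradict transversality," is not literally true -- leaves of a transverse foliation of an $S^1$-bundle can meet a fiber many times even when the connection is flat -- and the correct conclusion comes from the homotopy lifting argument (decomposing a null-homotopy into small squares on which the local product structure applies), which is exactly how the paper's commutator reduction should be read.
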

		\begin{proof}
			Consider three types of short arcs in $\Mpunc$: eastward arcs, northward arcs, and transverse arcs (ie arcs contained in flowlines of the suspension flow of $\phi$). Every curve in $\Mpunc$ can be $C^0$ approximated by such curves. As in \cref{prop:flat1}, it is enough to show that the monodromy of $\J$ around all commutators is trivial. 
			
			First, let's check that the monodromy around a commutator of eastward and northward arcs vanishes (ie the monodromy around the boundary of a rectangle is trivial). Let $\gamma_1$ and $\gamma_2$ be the north and south sides of the rectangle respectively. The $\theta$-image of any $\Jpar$-parallel section over $\gamma_1$ is also the $\theta$-image of a $\Jpar$-parallel section over $\gamma_2$; this follows from flatness of $\Jpar$ combined with the fact that the $\theta$-image of a $\Jpar$-parallel section along any northward arc is a single point. A $\J$-parallel section is formed by patching together $\Jpar$ parallel sections at their asymptotes according to \cref{eqn:jdef}. Therefore, the $\theta$-images of $\J$-parallel sections over $\gamma_1$ are also $\theta$-images of $\J$-parallel sections over $\gamma_2$. It follows that the desired monodromy is the identity.\plabel{fix2:46}

			Second, the commutator of northward and transverse arcs vanishes because both are type 1 curves. 

			Finally, consider the commutator of a transverse arc of length $\eps$ and an eastward arc $\gamma$. Observe that the entire construction of $\J_\gamma$ for an eastward arc $\gamma$ is equivariant with respect to dilations in the north-south or east-west directions, and in particular, a dilation by $\lambda^\eps$ in the north-south direction and $\lambda^{-\eps}$ in the east-west direction where $\eps$. This is equivalent to the vanishing of the desired monodromy.
		\end{proof}
		We can now rectify the lack of a good topology on $E_\Pi$. A flat connection gives a local product structure on $E_\Pi$, so we endow $E_\Pi$ with the corresponding local product topology.
		
		$\Pi$ has a special section, called the \emph{$\infty$-section} and denoted $r_\infty$, whose value at any point is the point at infinity in the corresponding fiber. There is another section, called the \emph{zero-section} and denoted $r_0$, defined by the property $$\theta(r_0(p))=p$$ for all $p\in \Mpunc$. Note that neither of these sections is flat with respect to $\J$. Nevertheless, these sections are continuous.
		\begin{lem}
			The $\infty$-section and the zero-section are both continuous sections of $\Pi$.
		\end{lem}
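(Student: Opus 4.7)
By construction, the topology on $E_\Pi$ is the local product topology for which parallel transport under $\J$ gives homeomorphisms $\Pi^{-1}(U)\cong U\times \Pi^{-1}(p)$ over simply connected neighbourhoods $U$ of $p$, via $(q,y)\mapsto \J_{\gamma_q}(y)$ for a short path $\gamma_q$ from $p$ to $q$. A section $s$ is therefore continuous at $p$ iff the coordinate map $\sigma(q)=\J_{\gamma_q^{-1}}(s(q))\in \Pi^{-1}(p)$ converges to $s(p)$ as $q\to p$. The plan is to verify this separately for $s=r_\infty$ and $s=r_0$, reducing by functoriality of $\J$ to checking convergence along each generating arc type (type 1, type 2, type 3, transverse).

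For $r_\infty$, along type 1, transverse, and type 3 arcs the extension of $\Jpar$ to $\J$ in \cref{sec:fullconnection} was made by stipulating that $\J$ fixes $\infty$, so those directions contribute nothing. For a short east-west arc $\gamma_q$ of length $\varepsilon$, the east-west definition of $\J$ identifies $\J_{\gamma_q^{-1}}(\infty_q)$ with the unique $y\in\Pi^{-1}(p)$ whose $\Jpar$-parallel section blows up at time exactly $\varepsilon$, namely $y=\tmax^{-1}(\varepsilon)$. \cref{lem:slowexplode} then bounds $\tmax^{-1}(\varepsilon)\geq c/\varepsilon$, forcing $\tmax^{-1}(\varepsilon)\to +\infty$ as $\varepsilon\to 0$; under the one-point compactification $\Pi^{-1}(p)=\R\cup\{\infty\}=S^1$ this is precisely convergence to $\infty_p$. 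The westward case is symmetric.

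For $r_0$, the plan is to track the $\theta$-image. Since $r_0(q)$ lies in the finite part of the fiber, the defining relation $\theta\circ \J_\gamma = h_\gamma\circ \theta$ applies, and using $\theta(r_0(q))=q$ yields $\theta(\J_{\gamma_q^{-1}}(r_0(q)))=h_{\gamma_q^{-1}}(q)$. Continuity of the holonomy of $\F^s$ in both the base point and the path gives $h_{\gamma_q^{-1}}(q)\to p=\theta(r_0(p))$ as $q\to p$. Since $\theta$ restricts to a homeomorphism from the finite part of $\Pi^{-1}(p)$ onto the generalized $\F^u$-leaf through $p$, this convergence in $\Sigmapunc$ lifts to convergence in the $S^1$ topology of $\Pi^{-1}(p)$.

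The main point I expect to require the most care is the east-west case for $r_\infty$: this is the only place where the circle topology at infinity genuinely intervenes, since $\J$ does \emph{not} fix $\infty$ along east-west arcs. The argument works only because the reversed short east-west transport sends $\infty_q$ to arbitrarily large fiber values at $p$, which the compactification identifies with $\infty_p$. The remaining cases — type 1, transverse, and type 3 for $r_\infty$, and every generating arc for $r_0$ — are essentially automatic once $\J$ has been correctly extended in \cref{sec:fullconnection}.
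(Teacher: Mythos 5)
Your proposal is correct and follows essentially the same route as the paper: for $r_\infty$ the key input is \cref{lem:slowexplode}, which you use to show that the local coordinate $\J_{\gamma_q^{-1}}(\infty_q)=\tmax^{-1}(\varepsilon)\geq c/\varepsilon$ escapes to $\infty_p$ as $q\to p$, while the paper phrases the same estimate as the closedness of the graph of $r_\infty$. For $r_0$ you track $\theta$-images via $\theta\circ\J_\gamma=h_\gamma\circ\theta$ and continuity of $\F^s$-holonomy; the paper instead notes that over a product neighbourhood $U$ the $\J$-local-product topology coincides with the obvious product topology on $U'$, which is just a repackaging of the same holonomy observation.
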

		\begin{proof}
			\cref{lem:slowexplode} gives a quantitative bound on how fast flat sections can explode to $\infty$ in the east and west directions. In the north-south and transverse directions, flat sections do not blow up. It follows that the graph of the $\infty$-section is closed and $r_\infty$ is continuous.

			Now let's check continuity of the zero-section near a point $p\in \Mpunc$. Take a neighbourhood $U$ of a point $p\in \Mpunc$ which $\F^s$ and $\F^u$ foliate as products. The set $$U'=\set{y \in E_\pi \mid \theta(y)\in U, \pi(y)\in U}$$ is a neighbourhood of $r_0(p)$. Since holonomy of $\F^s$ exists for all transverse arcs in $U$, the topology on $U'$ is just the standard product topology. The graph of the zero section is clearly closed in this topology.
		\end{proof}

		\begin{lem}\label{lem:zeroflat}
			The zero-section is flat over any leaf of $\F^s$.
		\end{lem}
		\begin{proof}
			This follows immediately from the way we defined $\J$ to agree with the holonomy of $\F^s$ in \cref{eqn:hol}. If $\lambda$ is a leaf of $\F^s$ and $\gamma$ is a path in $\lambda$, then we have
			\begin{equation*}
				h_\gamma(\gamma(0)) = \gamma(1)
			\end{equation*}
			and
			\begin{align*}	
			\J_\gamma(r_0(\gamma(0))) &= \theta^{-1} \circ h_\gamma \circ \theta(r_0(\gamma(0)))\\
			&= \theta^{-1} \circ h_\gamma(\gamma(0))\\
			&= r_0(\gamma(1)).
			\end{align*}
		\end{proof}

		\begin{prop}\label{prop:fillable2}
			The monodromy of $\J$ around a curve parallel to the Dehn filling slope on a toroidal end of $\Mpunc$ is trivial. Moreover, a parallel section over such a curve has zero intersection number with the $\infty$-section.
		\end{prop}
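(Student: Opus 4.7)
The plan is to bootstrap from \cref{prop:fillable}, which establishes the analogous statement for the partial connection $\Jpar$, together with the flatness of $\J$ proved in \cref{prop:flat2}. Flatness implies that the monodromy $\J_\gamma$ depends only on the homotopy class of $\gamma$ rel basepoint, so I am free to replace $\gamma$ by an arbitrarily small representative in the same class whenever convenient.

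First I establish the monodromy statement. Fix any compact subset $W \subset \pi^{-1}(p) \subset \Pi^{-1}(p)$. By \cref{prop:fillable}, some small representative $\gamma_W$ of the filling class satisfies $\Jpar_{\gamma_W}|_W = \mathrm{id}$; since $\J$ extends $\Jpar$, we also have $\J_{\gamma_W}|_W = \mathrm{id}$, and by flatness $\J_\gamma|_W = \mathrm{id}$ as well. Exhausting $\pi^{-1}(p) \cong \R$ by an increasing sequence of such compacta shows that $\J_\gamma$ fixes every point of $\pi^{-1}(p)$. Since $\J_\gamma$ is a self-homeomorphism of $\Pi^{-1}(p) \cong S^1$ and $\Pi^{-1}(p) \setminus \pi^{-1}(p) = \set{\infty}$, it must also fix $\infty$, and hence $\J_\gamma = \mathrm{id}$.

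For the intersection number statement, pick any finite $x \in \pi^{-1}(p)$ and let $s$ be the $\J$-parallel section over $\gamma$ starting at $x$, which closes up to a loop in $E_\Pi$ by the monodromy result just proved. Applying \cref{prop:fillable} with $W = \set{x}$ produces a small representative $\gamma'$ of the same homotopy class such that the $\Jpar$-parallel section from $x$ over $\gamma'$ is defined throughout; this section never blows up, so the $\J$-parallel section $s'$ over $\gamma'$ coincides with it and remains entirely within the $\pi$-part of $\Pi$, disjoint from $r_\infty$. Thus $s'$ and the restriction $r_\infty|_{\gamma'}$ are disjoint sections in the torus $\Pi^{-1}(\gamma')$, and in particular have algebraic intersection number zero. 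Homotopy invariance of intersection number under the homotopy from $\gamma'$ to $\gamma$ (tracked by the continuously varying parallel section) transports this to zero intersection of $s$ with $r_\infty$. The main subtlety to watch for is that the $\J$-parallel section over an arbitrary representative may genuinely pass through $\infty$ at blowup times; the content of the argument is that flatness lets us sidestep this by computing the intersection on a sufficiently small representative where no blowup occurs.
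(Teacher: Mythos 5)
Your proof is correct and takes essentially the same route as the paper's: use flatness of $\J$ to replace the filling curve by a small representative, invoke \cref{prop:fillable} to get triviality of the monodromy on the finite part of the fiber and hence (by continuity) at $\infty$, and then observe that a $\Jpar$-parallel section over a small representative avoids $r_\infty$, which gives zero intersection number by homotopy invariance. You are somewhat more careful than the paper about the quantifier structure in \cref{prop:fillable} (exhausting $\pi^{-1}(p)$ by compacta, with a possibly different small representative for each), which is a reasonable clarification of the paper's terser "curve hugging very close to the boundary."
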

		\begin{proof}
			Since $\J$ is a flat connection, \xlabel{fix6:10}{the conjugacy class of its monodromy around a closed curve is invariant under free homotopies of the closed curve.} Thus, it suffices to show that the desired monodromy around some freely homotopic curve is trivial. Fix a point $p$ on an unstable prong incident to the toroidal end. Consider a loop based at $p$ freely homotopic to our curve. \cref{prop:fillable} tells us that the monodromy of $\Jpar$ around this loop is the identity, and moreover that the domain of $\Jpar$ can be made as large as we like by choosing the loop to hug very close to the toroidal end. Since $\J$ agrees with $\Jpar$ when they are both defined, we conclude that the monodromy is the identity for all points in the fiber not equal to $\infty$. Since $\J$ is a bijection from $\Pi^{-1}(p)$ to itself, it must be equal to the identity at $\infty$ as well.\plabel{fix5:17}
			
			For the second statement, it suffices to check that just one parallel section of $\J$ has zero intersection number with the $\infty$-section. Simply take any parallel section of $\Jpar$ along the meridian; as noted in the previous paragraph, such a section has no intersections with the $\infty$-section.\plabel{fix2:49}
		\end{proof}
		\cref{prop:fillable2} guarantees that $(\Pi,\J)$ extends to a bundle over $\MM$ with a flat connection. Moreover, the $\infty$-section extends to a section over $\MM$. Thus, the Euler class of $\Pi$ vanishes and the $S^1$-bundle with flat connection $(\Pi, \J)$ unrolls to an $\R$-bundle with flat connection which we call $(\widehat \Pi, \widehat \J)$. We choose this unrolling so that the $\infty$-section lifts to a section of $\widehat \Pi$. More detail on this construction is provided in \cref{rmk:fiberwiselifts}.

		\begin{rmk}\label{rmk:fiberwiselifts}
			Suppose $X$ is a space with an $S^1$ bundle $S^1\to E \xrightarrow{\Pi} X$ and a flat connection $\H$. A \emph{fiberwise cover} of $\Pi$ is an bundle $\R \to \widehat E \xrightarrow{\widehat \Pi} X$ along with projection map $\beta$ making the following diagram commute.
			\begin{equation*}
				\begin{tikzcd}
					\R \arrow[r,""] \arrow["\mod 1", d, swap] & \widehat E \arrow[d,"\beta"] \arrow[dr,"\widehat{\Pi}"] &  \\
					S^1 \arrow[r,""] & E \arrow[r,"{\Pi}"] & X \\
				\end{tikzcd}
			\end{equation*}
			Fiberwise covers of $\Pi$ are classified by elements of $H^1(E,\Z)$ which evaluate to 1 on each fiber. Such an element exists if and only if the Euler class of $\Pi$ vanishes. By Poincare duality, such elements are in one to one correspondence with homotopy classes  of sections of $\Pi$. Given such a class $\phi \in H^1(E,\Z)$, we may construct $\widehat E$ as the corresponding Abelian cover of $E$. Let $\beta:\widehat E \to E$ be the corresponding covering map and let $\widehat \Pi = \Pi \circ b$. Since $\phi$ evaluates to 1 on each fiber of $\Pi$, the $\widehat \Pi$ preimage of a point in $X$ is a copy of $\R$ as desired. Observe that under this construction, any section of $E$ Poincare dual to $\phi$ lifts to a section of $\widehat E$. In the context of this article, we are using the fiberwise lift corresponding to the $\infty$-section.

			Suppose now that $\Pi$ is equipped with a flat connection $\H$. Then on any fiberwise cover $\widehat \Pi$, there is an induced flat connection $\widehat{\H}$. One may define $\widehat \H$ as follows. A flat connection on the bundle $\Pi$ is the same thing as a foliation of $E$ by leaves transverse to the $S^1$ fibers; the flat connection may be recovered as holonomy of this foliation. The lift of this foliation to $\widehat E$ is transverse to the fibers of $\widehat \Pi$. Moreover, the holonomy of the lifted foliation exists along any path $\gamma$ in the base. This holonomy map is what we call $\widehat \H_\gamma$.
		\end{rmk}
		
		\begin{proof}[Proof of \cref{thm:main}]
			The flat connection $\widehat \J$ gives a homomorphism from $\pi_1(\MM)$ into $\Homeo^+(\R)$. This map is nontrivial since the monodromy around any closed orbit of the restriction of the suspension flow of $\phi$ to $\Mpunc$ is nontrivial\plabel{fix2:50}; indeed, the monodromy of $\Jpar$ around such a curve is a dilation. Theorem 1.1 of \cite{boyer_orderable_2002} states that for fundamental groups of irreducible 3-manifolds, the existence of any nontrivial map to a left-orderable group is equivalent to the existence of a left-ordering.
		\end{proof}
	\end{subsection}
	
	\begin{subsection}{The fiber of \titletex{$\widehat\Pi$} and the leaf space}
		\newcommand{\cover}[1]{\overline{#1}}
		In this section we will prove \cref{thm:main2}. Our strategy is to construct a monotone map from the leaf space of $\widetilde \F$ to a chosen fiber of $\widehat\Pi$. Before proceeding with the proof, we briefly summarize the relevant constructions from the previous section. $\Pi$ is an $S^1$ bundle over $\MM$ with trivial Euler class. $\J$ is a flat connection on $\Pi$. We defined a section $r_0$ of $\Pi$ called the zero-section. The zero-section is defined by the property that $\theta(r_0(p))=p$. The $\R$-bundle $\widehat \Pi$ is a fiberwise cover of $\Pi$, and is equipped with a flat connection $\widehat \J$. 

		\begin{proof}[Proof of \cref{thm:main2}]
			Let $\cover\Mpunc$ be the lift of $\Mpunc$ to $\widetilde\MM$.\plabel{fix2:52} We use bars to denote the lifts of objects in $\Mpunc$ (eg $\F^s$) to objects in $\cover \Mpunc$.  (eg $\cover {\F^s}$). Recall that $L$ is the leaf space of $\widetilde \F$.\plabel{fix3:30} Let $K$ be the the leaf space of $\cover {\F^s}$, where prongs incident with the same singularity are considered the same leaf. This is the same as saying that $K$ is the leaf space of the induced stable foliation in $\widetilde \MM$.\plabel{fix3:29} Let $f_1\colon L \to K$ be the monotone, $\pi_1(\MM)$-equivariant map which crushes each interval in $L$ corresponding with a lift of a saddle region down to a point.
		
			By construction, the $\infty$-section of the $S^1$-bundle $\Pi$ lifts to a section (not just a multi-valued section) of the $\R$-bundle $\widehat \Pi$. Refer to \cref{rmk:fiberwiselifts} for more discussion of this point. The zero-section is homotopic to the $\infty$ section, so it also lifts to a section of $\widehat \Pi$. \xlabel{fix7:2}{Choose such a lift and call it $\widehat{r_0}$.}\plabel{fix2:53} 

		Select a basepoint $p\in \Mpunc$ and a lift $\cover p \in \cover \Mpunc$. We can define a monotone, $\pi_1(\MM)$-equivariant map
		\[
		f_2 \colon K\rightarrow{\widehat \Pi}^{-1}(p)
		\]
		as follows. For any leaf $\cover \lambda$ in $\cover {\F^s}$, choose a point $\cover q$ on $\cover \lambda$ and a path $\cover \gamma\colon [0,1]\to \cover \Mpunc$ from $\cover q$ to $\cover p$\plabel{fix3:34}. Let $q$, $\lambda$, and $\gamma$ be the projections of $\cover q$, $\cover \lambda$, and $\cover \gamma$ to $\Mpunc$\plabel{fix2:54}, and similarly let $\gamma$ be the projection of $\cover \gamma$ to $\Mpunc$. We now define $$f_2(\cover \lambda)={\widehat \J_\gamma}(\widehat{r_0}(q)).$$ 

		Since $\widehat \J$ extends to a flat bundle over $\MM$, this definition is independent of the choice of $\gamma$ in its relative homotopy class in $\MM$. As discussed in \cref{lem:zeroflat}\plabel{fix3:33}, the zero-section is flat on $\lambda$. Therefore, our definition is independent of the choice of $q$ on $\lambda$. Finally, it's easy to check that two prongs of $\cover{\F^s}$ corresponding with the same point in $K$ have the same value of $f_2$. \xlabel{fix7:3}{One way to see this is to connect two adjacent prongs by a short path $\gamma_p$, note that $h_{\gamma_p}(\gamma_p(0))=\gamma_p(1)$, and then argue as in \cref{lem:zeroflat} that $f_2$ has the same value on these two prongs. Then one could iterate to show that all prongs incident to the same singularity have the same value of $f_2$.}

		Let's check that $f_2$ is $\pi_1$-equivariant, ie that it intertwines the action on $K$ by deck transformations and the action on $\widehat \Pi^{-1}(p)$ induced by $\widehat \J$.\plabel{fix5:21} Given $g\in \pi_1(\MM)$, represent $g$ by a loop $\gamma_2$ in $\Mpunc$ based at $p$. \xlabel{fix7:4}{We remind the reader that $g$ acts on $\widehat{\Pi}^{-1}(p)$ as $\widehat \J_{\gamma_2^{-1}}$, as explained in \cref{sec:connectionintro}.} Now we have
		\begin{align*}
			f_2(g\cover\lambda)&=\widehat{\J}_{\gamma * \gamma_2^{-1}}(\widehat{r_0}(q))&\text{since $\gamma * \gamma_2^{-1}$ lifts to a path from $g\cover \lambda$ to $\cover p$}\\
			&=\widehat{\J}_{\gamma_2^{-1}} \circ \widehat{\J}_{\gamma}(\widehat{r_0}(q))\\
			&=\widehat{\J}_{\gamma_2^{-1}} f_2(\cover \lambda)\\
			&=g f_2(\cover\lambda) &\text{since $g$ acts on ${\widehat \Pi}^{-1}(p)$ as $\widehat \J_{\gamma_2^{-1}}$}
		\end{align*}\plabel{fix2:55}

		Next, we will check that for any leaf $\lambda$ of $\cover {\F^s}$, there is an interval in $K$ around $\lambda$ which $f_2$ maps homeomorphically onto an interval in $\widehat{\Pi}^{-1}(p)$. (Though $K$ is not a 1-manifold, the notion of interval still makes sense: we mean the lift to $K$ of an interval in $\cover{\Mpunc}$ which is transverse to $\cover {\F^s}$.) \plabel{fix3:36} Roughly speaking, \xlabel{fix6:12.1}{the local leaf space of $\overline {\F^s}$ is homeomorphic to the local leaf space of ${\F^s}$,} and $\theta$ gives rise to a homeomorphism between the local leaf space of $\F^s$ and an interval in a fiber of $\widehat \Pi$. Finally, $\widehat \J$ gives a homeomorphism from any fiber of $\widehat \Pi$ to $\widehat \Pi^{-1}(p)$. The function $f_2$ can be expressed as a composition of these maps, and hence is locally a homeomorphism.\plabel{fix5:22}
		
		In more detail, choose a short curve $\cover{\gamma_3}\colon [0,1]\to \cover\Mpunc$ that is transverse to $\cover{\F^s}$, lies in the strong unstable foliation, and passes through the leaf $\lambda$. Then $\cover \gamma_3$ parameterizes an interval in $K$ \xlabel{fix6:12}{containing $\lambda$}. We will check that $f_2$ is a homeomorphism on this interval. Let $\cover q=\cover \gamma_3(1)$. Let $\cover \gamma$ be a path from $\cover q$ to $\cover p$. Let $\gamma$, $\gamma_3$, and $q$ be the projections of $\cover \gamma$, $\cover \gamma_3$, and $\cover q$ to $\Mpunc$.\plabel{fix3:35} Since $\gamma_3$ lies in the strong unstable foliation, $\gamma_3$ is the $\widehat \theta_q$-image of some curve $\gamma_4$ in ${\widehat\Pi}^{-1}(q)$. Here, $\widehat \theta_q:\widehat{\Pi}^{-1}(q) \to \Mpunc$ is the composition of $\theta$ with the covering map $\widehat\Pi^{-1}(q)\to \Pi^{-1}(q)$. A priori, there are many choices for $\gamma_4$ because the covering map $\widehat\Pi^{-1}(q)\to \Pi^{-1}(q)$ is $\Z$ to 1. We eliminate this ambiguity by asking that 
		\begin{equation}\label{eqn:normalize}
		\gamma_4(1)=\widehat{r_0}(\gamma_3(1)).
		\end{equation}

		\xlabel{fix6:12.2}{
		Using the definition of $\J$ on the type 1 curve $\gamma_3$, we have
		\begin{align*}
			\theta \circ \J_{\gamma_3|_{[t,1]}}(r_0(\gamma_3(t))) &= \theta \circ r_0(\gamma_3(t))\\
			&= \gamma_3(t)
		\end{align*}
		Therefore,
		\begin{align}
		\widehat{\theta}_q \circ \widehat{\J}_{\gamma_3|_{[t,1]}}(\widehat{r}_0(\gamma_3(t))) &= \gamma_3(t)\\
		\widehat{\J}_{\gamma_3|_{[t,1]}}(\widehat{r}_0(\gamma_3(t))) &= \gamma_4(t)\label{eqn:x5}
		\end{align}
		A priori, \cref{eqn:x5} holds only up to deck transformations of the covering $\widehat{\Pi}^{-1}(q) \to \Pi^{-1}(q)$. However, the normalization in \cref{eqn:normalize} is designed so that \cref{eqn:x5} holds exactly.}
		
		Now let us compute $f_2$ on the desired interval. For any $t\in [0,1]$,
		\begin{align}
			f_2(\overline {\gamma_3}(t))&=\widehat{\J}_{\gamma} \circ \widehat{\J}_{\gamma_3|_{[t,1]}}(\widehat{r_0}(\gamma_3(t)))\label{eqn:x1}\\
			&=\widehat{\J}_\gamma (\gamma_4(t))\label{eqn:x2}\\
			&=\widehat{\J}_\gamma (\widehat\theta_q^{-1}(\gamma_3(t))) \label{eqn:x3}.
		\end{align}
		In \cref{eqn:x1} we used that ${\gamma_3}|_{[t,1]}*\gamma$ lifts to a path from $\cover \gamma_3(t)$ to $\cover p$. In \cref{eqn:x2}, we applied \cref{eqn:x5}. In \cref{eqn:x3}, we have used the fact that $\widehat{\theta}_q$ restricts to a homeomorphism between the images of $\gamma_4$ and $\gamma_3$. \cref{eqn:x3} expresses $f_2$ locally as a composition of the homeomorphisms $\widehat{\J}$ and $\widehat \theta^{-1}$, so $f_2$ is a homeomorphism of $\cover\gamma_3$ onto its image as desired.\plabel{fix5:23} The chosen leaf $\lambda$ was arbitrary, so it follows that $f_2$ is continuous and monotone. We conclude that the composition $f=f_2\circ f_1$ is a continuous, monotone, $\pi_1(\MM)$-equivariant map. The $\pi_1(\MM)$ action on the image is nontrivial as noted in the proof of \cref{thm:main}.
		\end{proof}
		\begin{rmk}
			The map $f_2$ can be visualized quite cleanly in the setting of \cref{fig:sec2}. Extend $\gamma$ linearly in both directions and choose $p=\gamma(0)$. The vertical red lines (plus their completions at $\infty$) are $\theta$-images of fibers of $\Pi$. Lift these fibers to half-open subintervals of fibers of the $\R$-bundle, $\widehat \Pi$. Choose the lifts so that they intersect the zero-section in $\widehat \Pi$. Now parallel transport these subintervals to $\widehat{\Pi}^{-1}(\gamma(0))$ along $\gamma^{-1}$. The transported intervals are disjoint and cover ${\widehat \Pi}^{-1}(\gamma(0))$. To be more precise, let $\dots$, $x_{-1}$, $x_0$, $x_1$, $x_2$, \dots be the lifts of the point $x \in \Pi^{-1}(\gamma(0))$ to $\widehat{\Pi}^{-1}(\gamma(0))$. Parallel transport along $\gamma^{-1}$ maps the first vertical red line (viewed as a half-open interval in a fiber of $\widehat{\Pi}$) to $[x_{-1},x_0)$. The second vertical red line is transported to $[x_{-2},x_{-1})$, etc. During this parallel transport, the points at $\pm \infty$ of the vertical red lines follow the section $s_x$ drawn in the figure.\plabel{fix2:56}

			Therefore, the vertical lines (plus their completions at $\infty$) contain a representative from each point preimage of $f_2$. The quotient of the leaf space we have constructed (ie $\widehat{\Pi}^{-1}(\gamma(0))$) can now be seen as the concatenation of all of the vertical red lines in \cref{fig:sec2}. \xlabel{fix7:8}{This picture is analogous to that of the step map in the setting of skew-Anosov flows: the vertical red lines are unstable leaves in the orbit space, while the curved red arcs are analogous to stable leaves which make perfect fits with the unstable leaves. See \cite[Prop. 3.22]{barthelme_anosov_2017} for a discussion of the skew Anosov picture.}
		\end{rmk}
		
	\end{subsection}
\end{section}

\begin{section}{Computations}\label{sec:computations}
	Building on work of Dunfield and Bell, we were able to find \mfldcount{} manifolds in the Hodgson-Weeks census which can be constructed by a surgery satisfying the hypotheses of \cref{thm:main}. This represents about \mfldpercent{}\% of the 5801 non-L-spaces in the Hodgson-Weeks census \cite{hodgson_symmetries_1994, dunfield_floer_2019}. Dunfield and Bell found monodromies for many of the fibered, orientable 1-cusped manifolds that can be triangulated with at most 9 tetrahedra \cite{bell_data_nodate}. Using Bell's program \texttt{flipper}, they were able to find invariant laminations for about 25,700 of them \cite{bell_flipper_2013}. About 800 of these have orientable invariant laminations and monodromy preserving these orientations. The first few such examples with genus $\geq 2$ are listed in \cref{tab:cusped}.

	\begin{table}[htb]
		\centering
		\begin{tabular}{ll}
			\toprule
			Name & Genus \\
			\midrule
			m038 & 2  \\
			m120 & 3 \\
			s090 & 4 \\
			v0224 & 5 \\
			m221 & 3 \\
			t00448 & 6 \\
			o9\_00896 & 7 \\
			s173 & 4\\
			v0248 & 6\\
			m289 & 2\\
			t00682 & 4\\
			m305 & 2\\
			s296 & 2\\
			m310 & 3\\
			t00707 & 3\\
			\bottomrule
		\end{tabular}
		\caption{The first few 1-cusped fibered manifolds with genus $\geq 2$ and monodromy satisfying the conditions of \cref{thm:main}.}\label{tab:cusped}
	\end{table}

	We used \texttt{flipper} to drill these manifolds along their pseudo-Anosov singularities and produce ideal triangulations of the resulting many-cusped manifolds. Using \texttt{SnapPy}, we performed surgeries with small coefficients on these manifolds satisfying the constraints of \cref{thm:main} and identified the resulting manifolds in the Hodgson-Weeks census \cite{culler_snappy_nodate}. We found \mfldcount{} manifolds in the census, the first of which are shown in \cref{tab:closed}.

	\begin{table}[htb]
		\centering
		\begin{tabular}{lll}
			\toprule
			Name & Underlying fibered manifold & Volume\\
			\midrule
			m003(-2,3) & m004   & 0.981\\
			m004(6,1)  & m004   & 1.284\\
			m004(1,2)  & m004   & 1.398\\
			m003(-3,4) & v0650  & 1.414\\
			m009(4,1)  & m023   & 1.414\\
			m004(3,2)  & m004   & 1.440\\
			m004(7,1)  & m004   & 1.463\\
			m004(5,2)  & m004   & 1.529\\
			m015(5,1)  & t03310 & 1.757\\
			m009(5,1)  & m009   & 1.831\\
			m009(-5,1) & m009   & 1.831\\
			m011(1,3)  & v1577  & 1.831\\
			m009(1,2)  & m009   & 1.843\\
			m007(-5,1) & o9\_31045& 1.843\\
			m006(-5,1) & m009    & 1.941\\
			\bottomrule
		\end{tabular}
		\caption{The first few (closed) manifolds in the Hodgson-Weeks census to which \cref{thm:main} applies.}\label{tab:closed}
	\end{table}

	Dunfield obtained orderability results for many manifolds in the Hodgson-Weeks census either by constructing a taut foliation with vanishing Euler class or by constructing a $\PSL(2,\R)$ representation that lifts to a $\widetilde{\PSL}(2,\R)$ representation \cite{dunfield_floer_2019}. \cref{tab:overlap} shows the overlap in applicability between these methods and ours.

	\begin{table}[htb]
		\centering
		\begin{tabular}{ll|ll}
			&   & \multicolumn{2}{c}{\cref{thm:main} applies?}\\
		 	&   & Yes & No\\
			\midrule
			\multirow{2}{13em}{Taut foliation or $\PSL(2,\R)$ rep with Euler class 0?} & Yes  & 1795   & 1730\\
			&No  & 803   & 1473\\
			\bottomrule
		\end{tabular}
		\caption{The overlap in applicability between our method and previously used methods for proving orderability on the non-L-space rational homology spheres in the Hodgson-Weeks census.}\label{tab:overlap}
	\end{table}
\end{section}

\begin{section}{Remarks and questions}
	\begin{itemize}[wide]
		\item For which taut foliations does there exist a $\pi_1$-equivariant, order preserving map from the leaf space of the universal cover to $\R$? As a first step, we suggest the following conjecture:
			\begin{conj}\label{conj:anyslopes}
				\cref{thm:main2} holds without the condition that the surgery slopes have the same sign.
			\end{conj}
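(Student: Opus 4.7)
The plan is to extend the construction of Sections \ref{sec:foliations}--\ref{sec:connection} to slopes of mixed sign. The taut foliation $\F$ of \cref{cnstr:foln} is defined without any sign restriction on the $(p_i;q_i)$, and the partial connection $\Jpar$ of Section \ref{sec:connection} is likewise defined by the same formulas regardless of the sign of $p_i$. The monodromy calculation in \cref{prop:fillable} rests on the identity $\lambda^{m_i q_i}\cdot(\lambda^{-m_i q_i/p_i})^{p_i}=1$, which holds for every nonzero $p_i$. Hence the entire infrastructure surrounding $\Jpar$, including \cref{prop:flat1,prop:fillable}, survives verbatim in the mixed-sign setting.

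The real obstruction appears in the blowup analysis of \cref{sec:blowup}. When all slopes are positive, the factor $\alpha_i = \lambda^{m_i q_i/p_i}$ exceeds $1$, so a positive parallel section $s_x$ over an east-west ray grows monotonically past every magnifying singularity. With mixed signs, $\alpha_i<1$ at each negative-slope singularity, so $s_x$ can alternately grow and shrink. Consequently both \cref{prop:surjective} (surjectivity of $t_{\max}$) and, more importantly, \cref{prop:injective} (injectivity) are in jeopardy: two sections $s_x$ and $s_y$ with $0<x<y$ need not remain ordered, since a negative-slope singularity encountered by $s_y$ before $s_x$ can invert their relative position.

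My proposed remedy is ergodic-theoretic. Because the translation flow on the flat surface obtained from $\widetilde{\Sigmapunc}$ is uniquely ergodic (Masur's criterion, as exploited in \cref{lem:ergodic}), one should be able to associate to a.e. eastward ray a Lyapunov exponent $\chi = \sum_i \rho_i \log \alpha_i$, where $\rho_i$ is the asymptotic density of type-$i$ singularities intersected. If $\chi>0$, the estimates of \cref{lem:fastexplode,lem:slowexplode} adapt to give finite blowup time for sufficiently positive $x$, and injectivity of $t_{\max}$ can be recovered by comparing the \emph{first} passage to $\infty$ rather than a unique one. If $\chi\leq 0$, sections stay bounded along generic rays, the $\infty$-section becomes vacuous, and the $\R$-bundle $(\pi,\Jpar)$ itself already suffices to produce the desired map.

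The hard part will be injectivity in the oscillatory $\chi>0$ regime: controlling the relative order of $s_x(t)$ and $s_y(t)$ requires a strengthening of \cref{lem:ergodic} giving joint equidistribution (and a Diophantine-style estimate on the discrepancy) for singularities of opposite slope type, so that the expansion at positive-slope singularities dominates the contraction at negative-slope ones uniformly on long enough time intervals. Failing that, an alternative route is to bypass the bundle $(\Pi,\J)$ entirely and construct $f\colon L\to\R$ of \cref{thm:main2} directly via the gluing perspective of \cref{sec:gluing}, treating the invariant paths $[x,\infty)$ emanating from positive saddle regions and the dually-oriented paths emanating from negative saddle regions as two interleaving families whose joint quotient one shows by hand is a Hausdorff ordered line.
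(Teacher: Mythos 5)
This statement is a \emph{conjecture} in the paper (labeled \texttt{conj:anyslopes}), and the paper offers no proof of it. The paper's only commentary is the sentence following the conjecture: ``The difficulty is that parallel sections now typically blow up towards both $+\infty$ and $-\infty$.'' So there is no ``paper's own proof'' to compare against; I can only assess your proposal on its own merits.

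Your general framing is sound: $\Jpar$, \cref{prop:flat1}, and \cref{prop:fillable} are indeed sign-agnostic, and the bottleneck is the blowup analysis of \cref{sec:blowup}. However, the specific failure mode you flag for \cref{prop:injective} is not correct. You write that ``$s_x$ and $s_y$ with $0<x<y$ need not remain ordered, since a negative-slope singularity encountered by $s_y$ before $s_x$ can invert their relative position.'' This cannot happen. Each singularity at height $q$ acts on a section value $s$ by the affine map $s\mapsto q+\alpha_i(s-q)$, which is strictly increasing since $\alpha_i>0$; and a singularity at height $q$ with $s_x<q<s_y$ leaves $s_x$ fixed while sending $s_y$ to $q+\alpha_i(s_y-q)>q>s_x$. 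So $s_x(t)<s_y(t)$ persists for all $t$ where both are defined, regardless of the signs of the $p_i$, and consequently $\tmax$ remains monotone nonincreasing. The order-theoretic structure is fine.

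The genuine obstruction is the one the paper names. With mixed signs, a section $s_x$ with $x>0$ can blow up to $+\infty$, and \emph{the continuation $s_1$ returning from $-\infty$ can itself later blow up to $-\infty$}, and so on. The construction of the full connection $\J$ in \cref{sec:fullconnection} wraps around $\infty$ exactly once per eastward ray; with mixed signs one needs a well-defined bi-infinite sequence of wrappings, and the uniqueness/existence statements replacing \cref{prop:surjective,prop:injective} must now control which sections have blown up, in which direction, and how many times. Your Lyapunov-exponent heuristic $\chi=\sum_i\rho_i\log\alpha_i$ points in a reasonable direction, but (a) the relevant exponent is not a single number, since sections with $x>0$ and $x<0$ experience the magnification/contraction factors with opposite roles, so a Lyapunov \emph{spectrum} rather than a scalar is in play; (b) even granting a positive exponent, upgrading an almost-everywhere asymptotic statement to the \emph{uniform, quantitative} estimates of \cref{lem:fastexplode,lem:slowexplode} (which must hold for \emph{every} eastward ray, not just generic ones, and with explicit constants) is exactly where the hard analysis lives; and (c) the $\chi\leq 0$ branch does not immediately collapse to the fibered case --- sections can stay bounded on average yet still blow up along a positive-measure or even a dense set of rays, so dismissing the $\infty$-section as ``vacuous'' is unjustified. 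In short, your proposal correctly locates the difficulty in Section \ref{sec:blowup} and offers a plausible research direction, but it mischaracterizes the order-preservation issue, does not engage with the multiple-wrapping problem that the paper explicitly identifies, and does not supply the uniform estimates that the existing proof architecture requires. As a sketch of a strategy it is reasonable; as a proof it has substantial gaps.
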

			This would greatly expand the scope of the results in this paper; for example, by \cite{fried_transitive_1983} it would include every 3-manifold carrying a transitive pseudo-Anosov flow with orientable invariant foliations. The difficulty is that the some of dilation factors $\lambda^{m_i p_i/q_i}$ may be greater than 1 in absolute value while others may be smaller than 1. Parallel sections are no longer monotonic as shown in \cref{fig:blowup}, and they might blow up as $t\to +\infty$ or as $t\to -\infty$. A more detailed study of their dynamics in this case is necessary.\plabel{fix2:57}

		\item Can the map $f\colon L\to \R$ be upgraded to a strictly monotone map? We expect that $f$ factors as 
			\begin{equation}
				\begin{tikzcd}
					L \arrow[r, "\mu"] \arrow[bend left, "f"]{rr} & R \arrow[r, "\eta"] \arrow[d, equal]&   \widehat{\Pi}\inv(p) \arrow[d,equal]\\
					& \R & \R
				\end{tikzcd}
			\end{equation}
			where $\mu$ is locally a homeomorphism onto its image, $\eta$ is monotonic, and $R$ is yet to be defined. The composition $f=\eta\circ \mu$ collapses the leaves of each saddle region in $L$ to a point. Therefore, $R$ should be obtained from $\widehat{\Pi} \inv (p)$ by blowing up the $f$-image of a leaf in a saddle region to a closed interval. The difficulty is that different saddle regions could conceivably map to the same point under $f$.

		\item What is the best possible analytic quality of the representations we have constructed?

		\item A generic pseudo-Anosov map will violate the orientability constraints of \cref{thm:main}. For surgery coefficients satisfying an appropriate parity condition, we expect that the methods of this paper may be extended to give an action of $\pi_1$ on $\R$ by possibly orientation-reversing homeomorphisms. 

		\item What can be said about fillings along the degeneracy slope? Experiments suggest that these are L-spaces, and hence do not carry taut foliations.
	\end{itemize}
\end{section}

\clearpage
\printbibliography

@book{kronheimer_monopoles_2007,
	title = {Monopoles and three-manifolds},
	volume = {10},
	publisher = {Cambridge University Press Cambridge},
	author = {Kronheimer, Peter B and Mrowka, Tomasz},
	year = {2007},
}

@article{roussarie_plongements_1974,
	title = {Plongements dans les variétés feuilletées et classification de feuilletages sans holonomie},
	volume = {43},
	url = {http://www.numdam.org/item/PMIHES_1974__43__101_0/},
	language = {en},
	urldate = {2020-04-23},
	journal = {Publications Mathématiques de l'IHÉS},
	author = {Roussarie, Robert},
	year = {1974},
	pages = {101--141},
	file = {Full Text PDF:/home/jonathan/snap/zotero-snap/common/Zotero/storage/LWKF4R2C/Roussarie - 1974 - Plongements dans les variétés feuilletées et class.pdf:application/pdf;Snapshot:/home/jonathan/snap/zotero-snap/common/Zotero/storage/QULMMPZ9/PMIHES_1974__43__101_0.html:text/html},
}

@article{zorich_flat_2006,
	title = {Flat {Surfaces}},
	url = {http://arxiv.org/abs/math/0609392},
	abstract = {Various problems of geometry, topology and dynamical systems on surfaces as well as some questions concerning one-dimensional dynamical systems lead to the study of closed surfaces endowed with a flat metric with several cone-type singularities. Such flat surfaces are naturally organized into families which appear to be isomorphic to the moduli spaces of holomorphic one-forms. One can obtain much information about the geometry and dynamics of an individual flat surface by studying both its orbit under the Teichmuller geodesic flow and under the linear group action. In particular, the Teichmuller geodesic flow plays the role of a time acceleration machine (renormalization procedure) which allows to study the asymptotic behavior of interval exchange transformations and of surface foliations. This long survey is an attempt to present some selected ideas, concepts and facts in Teichmuller dynamics in a playful way.},
	urldate = {2020-04-21},
	journal = {arXiv:math/0609392},
	author = {Zorich, Anton},
	month = sep,
	year = {2006},
	note = {arXiv: math/0609392},
	keywords = {Mathematics - Dynamical Systems, Mathematics - Geometric Topology},
	annote = {Comment: (152 pages; 51 figures) Based on the lectures given by the author at the Les Houches School "Number Theory and Physics" in March of 2003 and at the workshop on dynamical systems in ICTP, Trieste, in July 2004. See "Frontiers in Number Theory, Physics and Geometry. Volume 1: On random matrices, zeta functions and dynamical systems'', P.Cartier; B.Julia; P.Moussa; P.Vanhove (Editors), Springer-Verlag (2006) for the entire collection (including, in particular, the complementary lectures of J.-C. Yoccoz). For a short version see the paper "Geodesics on Flat Surfaces", arXiv.math.GT/0609399},
	file = {arXiv Fulltext PDF:/home/jonathan/snap/zotero-snap/common/Zotero/storage/YVAVK976/Zorich - 2006 - Flat Surfaces.pdf:application/pdf;arXiv.org Snapshot:/home/jonathan/snap/zotero-snap/common/Zotero/storage/8UH9U95G/0609392.html:text/html},
}

@article{hu_euler_2019,
	title = {Euler class of taut foliations and {Dehn} filling},
	url = {http://arxiv.org/abs/1912.01645},
	abstract = {We study the Euler class of co-orientable taut foliations on rational homology spheres. Given a rational homology solid torus \$X\$, we give necessary and sufficient conditions for the Euler class of taut foliations on Dehn fillings of \$X\$ that are transverse to the core of the filling solid torus to vanish, from which restrictions on the range of the filling slopes are derived. Precise calculations are done for taut foliations that are carried by certain nice branched surfaces. Implications of our results regarding left-orderability of \$3\$-manifold groups and the L-space conjecture are discussed. Our method also applies to non-integrable oriented plane fields over a rational homology sphere.},
	urldate = {2020-04-21},
	journal = {arXiv:1912.01645 [math]},
	author = {Hu, Ying},
	month = dec,
	year = {2019},
	note = {arXiv: 1912.01645},
	keywords = {Mathematics - Geometric Topology, 57M50, 57M25, 57R30, 20F60},
	annote = {Comment: 28 pages, 9 figures},
	file = {arXiv Fulltext PDF:/home/jonathan/snap/zotero-snap/common/Zotero/storage/VFZNERZE/Hu - 2019 - Euler class of taut foliations and Dehn filling.pdf:application/pdf;arXiv.org Snapshot:/home/jonathan/snap/zotero-snap/common/Zotero/storage/6U5NDK4H/1912.html:text/html},
}

@article{clay_graph_2013,
	title = {Graph manifolds, left-orderability and amalgamation},
	volume = {13},
	issn = {1472-2739, 1472-2747},
	url = {http://arxiv.org/abs/1106.0486},
	doi = {10.2140/agt.2013.13.2347},
	abstract = {We show that every irreducible toroidal integer homology sphere graph manifold has a left-orderable fundamental group. This is established by way of a specialization of a result due to Bludov and Glass for the almagamated products that arise, and in this setting work of Boyer, Rolfsen and Wiest may be applied. Our result then depends on input from 3-manifold topology and Heegaard Floer homology.},
	number = {4},
	urldate = {2020-04-18},
	journal = {Algebraic \& Geometric Topology},
	author = {Clay, Adam and Lidman, Tye and Watson, Liam},
	month = jul,
	year = {2013},
	note = {arXiv: 1106.0486},
	keywords = {Mathematics - Geometric Topology, Mathematics - Group Theory},
	pages = {2347--2368},
	annote = {Comment: 17 pages},
	file = {arXiv Fulltext PDF:/home/jonathan/snap/zotero-snap/common/Zotero/storage/WYDNZVZ3/Clay et al. - 2013 - Graph manifolds, left-orderability and amalgamatio.pdf:application/pdf;arXiv.org Snapshot:/home/jonathan/snap/zotero-snap/common/Zotero/storage/LGQNKS7F/1106.html:text/html},
}

@article{fried_transitive_1983,
	title = {Transitive {Anosov} flows and pseudo-{Anosov} maps},
	volume = {22},
	issn = {0040-9383},
	url = {http://www.sciencedirect.com/science/article/pii/0040938383900150},
	doi = {10.1016/0040-9383(83)90015-0},
	abstract = {ATRANSITIVEAnosov flow on a closed manifold Mis one with the qualitative behavior of a geodesic flow on a surface of negative curvature, that is global hyperbelocity and dense periodic set. A psedo-Anosov map is a homeomorphism of closed surface that has finitely many prescribed prong singlarities and is smooth and hyperbolic elsewhere: we refer to the Orsay Thurston Seminar for details [2]. We will show that Birkhoff's surfaces of section[1] can be used to established a close connection between these systems then M has dimension 3. This extends the srgery techniques of [4,5] to produce all the transitive Anove flows in dimension 3.},
	language = {en},
	number = {3},
	urldate = {2020-04-09},
	journal = {Topology},
	author = {Fried, David},
	month = jan,
	year = {1983},
	pages = {299--303},
	file = {ScienceDirect Full Text PDF:/home/jonathan/snap/zotero-snap/common/Zotero/storage/9FNWJAZJ/Fried - 1983 - Transitive Anosov flows and pseudo-Anosov maps.pdf:application/pdf;ScienceDirect Snapshot:/home/jonathan/snap/zotero-snap/common/Zotero/storage/X9MSFREQ/0040938383900150.html:text/html},
}

@article{calegari_problems_2002,
	title = {Problems in foliations and laminations of 3-manifolds},
	url = {http://arxiv.org/abs/math/0209081},
	abstract = {This is a problem list in the theory of foliations and laminations of 3-manifolds. The focus is on the relationship of foliations and laminations with other aspects of 3-manifold topology, especially with the Thurston theory of geometric structures. This problem list is not meant to be comprehensive; rather it focusses on research areas that currently seem (to the author) particularly promising.},
	urldate = {2020-02-24},
	journal = {arXiv:math/0209081},
	author = {Calegari, Danny},
	month = sep,
	year = {2002},
	note = {arXiv: math/0209081},
	keywords = {Mathematics - Geometric Topology, 57M50},
	annote = {Comment: 35 pages},
	file = {arXiv Fulltext PDF:/home/jonathan/snap/zotero-snap/common/Zotero/storage/6BN8IGBS/Calegari - 2002 - Problems in foliations and laminations of 3-manifo.pdf:application/pdf;arXiv.org Snapshot:/home/jonathan/snap/zotero-snap/common/Zotero/storage/TU79IADW/0209081.html:text/html},
}

@book{bell_flipper_2013,
	title = {flipper ({Computer} {Software})},
	author = {Bell, Mark},
	year = {2013},
	note = {Published: pypi.python.org/pypi/flipper},
	annote = {Version {\textless}{\textless}version number{\textgreater}{\textgreater}},
}

@article{boyer_l-spaces_2011,
	title = {On {L}-spaces and left-orderable fundamental groups},
	url = {http://arxiv.org/abs/1107.5016},
	abstract = {Examples suggest that there is a correspondence between L-spaces and 3-manifolds whose fundamental groups cannot be left-ordered. In this paper we establish the equivalence of these conditions for several large classes of such manifolds. In particular, we prove that they are equivalent for any closed, connected, orientable, geometric 3-manifold that is non-hyperbolic, a family which includes all closed, connected, orientable Seifert fibred spaces. We also show that they are equivalent for the 2-fold branched covers of non-split alternating links. To do this we prove that the fundamental group of the 2-fold branched cover of an alternating link is left-orderable if and only if it is a trivial link with two or more components. We also show that this places strong restrictions on the representations of the fundamental group of an alternating knot complement with values in Homeo\_+(S{\textasciicircum}1).},
	urldate = {2020-01-22},
	journal = {arXiv:1107.5016 [math]},
	author = {Boyer, Steven and Gordon, Cameron McA and Watson, Liam},
	month = jul,
	year = {2011},
	note = {arXiv: 1107.5016},
	keywords = {Mathematics - Geometric Topology, Mathematics - Group Theory},
	annote = {Comment: 30 pages, 4 figures, uses colour},
	file = {arXiv Fulltext PDF:/home/jonathan/snap/zotero-snap/common/Zotero/storage/XI5IQUFF/Boyer et al. - 2011 - On L-spaces and left-orderable fundamental groups.pdf:application/pdf;arXiv.org Snapshot:/home/jonathan/snap/zotero-snap/common/Zotero/storage/CUBGSYE9/1107.html:text/html},
}

@article{ozsvath_holomorphic_2004,
	title = {Holomorphic disks and genus bounds},
	volume = {8},
	issn = {1364-0380},
	url = {https://msp.org/gt/2004/8-1/p08.xhtml},
	doi = {10.2140/gt.2004.8.311},
	number = {1},
	urldate = {2020-01-22},
	journal = {Geometry \& Topology},
	author = {Ozsvath, Peter and Szabo, Zoltan},
	month = feb,
	year = {2004},
	pages = {311--334},
	file = {Full Text PDF:/home/jonathan/snap/zotero-snap/common/Zotero/storage/PJ6VR9QD/Ozsvath and Szabo - 2004 - Holomorphic disks and genus bounds.pdf:application/pdf;Snapshot:/home/jonathan/snap/zotero-snap/common/Zotero/storage/LZYVTKPT/p08.html:application/xhtml+xml},
}

@article{thurston_three-manifolds_1997,
	title = {Three-manifolds, {Foliations} and {Circles}, {I}},
	url = {http://arxiv.org/abs/math/9712268},
	abstract = {This paper investigates certain foliations of three-manifolds that are hybrids of fibrations over the circle with foliated circle bundles over surfaces: a 3-manifold slithers around the circle when its universal cover fibers over the circle so that deck transformations are bundle automorphisms. Examples include hyperbolic 3-manifolds of every possible homological type. We show that all such foliations admit transverse pseudo-Anosov flows, and that in the universal cover of the hyperbolic cases, the leaves limit to sphere-filling Peano curves. The skew R-covered Anosov foliations of Sergio Fenley are examples. We hope later to use this structure for geometrization of slithered 3-manifolds.},
	urldate = {2020-01-22},
	journal = {arXiv:math/9712268},
	author = {Thurston, William P.},
	month = dec,
	year = {1997},
	note = {arXiv: math/9712268},
	keywords = {Mathematics - Dynamical Systems, Mathematics - Geometric Topology, Mathematics - Group Theory, 57m50},
	annote = {Comment: 60 pages, 10 figures},
	file = {arXiv Fulltext PDF:/home/jonathan/snap/zotero-snap/common/Zotero/storage/K66AZ6JB/Thurston - 1997 - Three-manifolds, Foliations and Circles, I.pdf:application/pdf;arXiv.org Snapshot:/home/jonathan/snap/zotero-snap/common/Zotero/storage/IMEWE3WY/9712268.html:text/html},
}

@article{boyer_orderable_2002,
	title = {Orderable 3-manifold groups},
	url = {http://arxiv.org/abs/math/0211110},
	abstract = {We investigate the orderability properties of fundamental groups of 3-dimensional manifolds. Many 3-manifold groups support left-invariant orderings, including all compact P{\textasciicircum}2-irreducible manifolds with positive first Betti number. For seven of the eight geometries (excluding hyperbolic) we are able to characterize which manifolds' groups support a left-invariant or bi-invariant ordering. We also show that manifolds modelled on these geometries have virtually bi-orderable groups. The question of virtual orderability of 3-manifold groups in general, and even hyperbolic manifolds, remains open, and is closely related to conjectures of Waldhausen and others.},
	urldate = {2019-09-26},
	journal = {arXiv:math/0211110},
	author = {Boyer, Steven and Rolfsen, Dale and Wiest, Bert},
	month = nov,
	year = {2002},
	note = {arXiv: math/0211110},
	keywords = {Mathematics - Geometric Topology},
	annote = {Comment: 37 pages. Published version. Improvements in the organisation and presentation of the material},
	file = {arXiv\:math/0211110 PDF:/home/jonathan/snap/zotero-snap/common/Zotero/storage/TFR2K732/Boyer et al. - 2002 - Orderable 3-manifold groups.pdf:application/pdf;arXiv.org Snapshot:/home/jonathan/snap/zotero-snap/common/Zotero/storage/6A8IKWVM/0211110.html:text/html},
}

@book{penner_combinatorics_2016,
	title = {Combinatorics of {Train} {Tracks}. ({AM}-125)},
	isbn = {978-1-4008-8245-8},
	abstract = {Measured geodesic laminations are a natural generalization of simple closed curves in surfaces, and they play a decisive role in various developments in two-and three-dimensional topology, geometry, and dynamical systems. This book presents a self-contained and comprehensive treatment of the rich combinatorial structure of the space of measured geodesic laminations in a fixed surface. Families of measured geodesic laminations are described by specifying a train track in the surface, and the space of measured geodesic laminations is analyzed by studying properties of train tracks in the surface. The material is developed from first principles, the techniques employed are essentially combinatorial, and only a minimal background is required on the part of the reader. Specifically, familiarity with elementary differential topology and hyperbolic geometry is assumed. The first chapter treats the basic theory of train tracks as discovered by W. P. Thurston, including recurrence, transverse recurrence, and the explicit construction of a measured geodesic lamination from a measured train track. The subsequent chapters develop certain material from R. C. Penner's thesis, including a natural equivalence relation on measured train tracks and standard models for the equivalence classes (which are used to analyze the topology and geometry of the space of measured geodesic laminations), a duality between transverse and tangential structures on a train track, and the explicit computation of the action of the mapping class group on the space of measured geodesic laminations in the surface.},
	language = {en},
	publisher = {Princeton University Press},
	author = {Penner, R. C. and Harer, John L.},
	month = mar,
	year = {2016},
	note = {Google-Books-ID: L8XdCwAAQBAJ},
	keywords = {Mathematics / Combinatorics, Mathematics / Topology},
}

@article{culler_orderability_2018,
	title = {Orderability and {Dehn} filling},
	volume = {22},
	issn = {1364-0380, 1465-3060},
	url = {http://arxiv.org/abs/1602.03793},
	doi = {10.2140/gt.2018.22.1405},
	abstract = {Motivated by conjectures relating group orderability, Floer homology, and taut foliations, we discuss a systematic and broadly applicable technique for constructing left-orders on the fundamental groups of rational homology 3-spheres. Specifically, for a compact 3-manifold \$M\$ with torus boundary, we give several criteria which imply that whole intervals of Dehn fillings of \$M\$ have left-orderable fundamental groups. Our technique uses certain representations from \${\textbackslash}pi\_1(M)\$ into \${\textbackslash}widetilde\{{\textbackslash}mathrm\{PSL\}\_2 {\textbackslash}mathbb\{R\}\}\$, which we organize into an infinite graph in \$H{\textasciicircum}1({\textbackslash}partial M; {\textbackslash}mathbb\{R\})\$ called the translation extension locus. We include many plots of such loci which inform the proofs of our main results and suggest interesting avenues for future research.},
	number = {3},
	urldate = {2021-09-25},
	journal = {Geometry \& Topology},
	author = {Culler, Marc and Dunfield, Nathan M.},
	month = mar,
	year = {2018},
	note = {arXiv: 1602.03793},
	keywords = {Mathematics - Geometric Topology, 57M},
	pages = {1405--1457},
	annote = {Comment: 52 pages, 14 figures. V2: New conclusion (c) in Theorem 1.5, based on the added Lemma 8.4. V3: Correct proof of Claim 7.2. V4: To appear in Geometry and Topology. V5: Figure numbering changed to match published version},
	annote = {Comment: 52 pages, 14 figures. V2: New conclusion (c) in Theorem 1.5, based on the added Lemma 8.4. V3: Correct proof of Claim 7.2. V4: To appear in Geometry and Topology. V5: Figure numbering changed to match published version},
	file = {arXiv Fulltext PDF:/home/jonathan/snap/zotero-snap/common/Zotero/storage/4LNVWDPW/Culler and Dunfield - 2018 - Orderability and Dehn filling.pdf:application/pdf;arXiv.org Snapshot:/home/jonathan/snap/zotero-snap/common/Zotero/storage/2Z96NID4/1602.html:text/html},
}

@book{calegari_foliations_2007,
	address = {Oxford},
	series = {Oxford mathematical monographs},
	title = {Foliations and the geometry of 3-manifolds},
	isbn = {978-0-19-857008-0},
	publisher = {Clarendon},
	author = {Calegari, Danny},
	year = {2007},
	note = {OCLC: ocm80331744},
	keywords = {Foliations (Mathematics), Three-manifolds (Topology)},
}

@article{novikov_topology_1965,
	title = {The topology of foliations},
	volume = {14},
	journal = {Trudy Moskovskogo Matematicheskogo Obshchestva},
	author = {Novikov, Sergei Petrovich},
	year = {1965},
	pages = {248--278},
}

@article{boyer_taut_2019,
	title = {Taut foliations in branched cyclic covers and left-orderable groups},
	volume = {372},
	issn = {0002-9947, 1088-6850},
	url = {https://www.ams.org/tran/2019-372-11/S0002-9947-2019-07833-9/},
	doi = {10.1090/tran/7833},
	abstract = {We study the left-orderability of the fundamental groups of cyclic branched covers of links which admit co-oriented taut foliations. In particular we do this for cyclic branched covers of fibred knots in integer homology \$3\$-spheres and cyclic branched covers of closed braids. The latter allows us to complete the proof of the L-space conjecture for closed, connected, orientable, irreducible \$3\$-manifolds containing a genus one fibred knot. We also prove that the universal abelian cover of a manifold obtained by generic Dehn surgery on a hyperbolic fibred knot in an integer homology \$3\$-sphere admits a co-oriented taut foliation and has left-orderable fundamental group, even if the surgered manifold does not, and that the same holds for many branched covers of satellite knots with braided patterns. A key fact used in our proofs is that the Euler class of a universal circle representation associated to a co-oriented taut foliation coincides with the Euler class of the foliation’s tangent bundle. Though known to experts, no proof of this important result has appeared in the literature. We provide such a proof in the paper.},
	language = {en},
	number = {11},
	urldate = {2021-07-27},
	journal = {Transactions of the American Mathematical Society},
	author = {Boyer, Steven and Hu, Ying},
	year = {2019},
	keywords = {contact structure, Cyclic branched covers, fractional Dehn twist coefficient, left-orderable groups, taut foliation},
	pages = {7921--7957},
}

@article{calegari_laminations_2003,
	title = {Laminations and groups of homeomorphisms of the circle},
	doi = {10.1007/s00222-002-0271-6},
	abstract = {Abstract.If M is an atoroidal 3-manifold with a taut foliation, Thurston showed that π1(M) acts on a circle. Here, we show that some other classes of essential laminations also give rise to actions on circles. In particular, we show this for tight essential laminations with solid torus guts. We also show that pseudo-Anosov flows induce actions on circles. In all cases, these actions can be made into faithful ones, so π1(M) is isomorphic to a subgroup of Homeo(S1). In addition, we show that the fundamental group of the Weeks manifold has no faithful action on S1. As a corollary, the Weeks manifold does not admit a tight essential lamination with solid torus guts, a pseudo-Anosov flow, or a taut foliation. Finally, we give a proof of Thurston’s universal circle theorem for taut foliations based on a new, purely topological, proof of the Leaf Pocket Theorem.},
	author = {Calegari, Danny and Dunfield, N.},
	year = {2003},
}

@misc{bell_data_nodate,
	title = {Data accompanying the {Flipper} program, available at https://flipper.readthedocs.io/},
	author = {Bell, Mark C and Nathan, Dunfield},
}

@article{gabai_foliations_1983,
	title = {Foliations and the topology of 3-manifolds},
	volume = {18},
	issn = {0022-040X},
	url = {https://projecteuclid.org/euclid.jdg/1214437784},
	doi = {10.4310/jdg/1214437784},
	abstract = {Project Euclid - mathematics and statistics online},
	language = {EN},
	number = {3},
	urldate = {2020-06-10},
	journal = {Journal of Differential Geometry},
	author = {Gabai, David},
	year = {1983},
	mrnumber = {MR723813},
	zmnumber = {0533.57013},
	note = {Publisher: Lehigh University},
	pages = {445--503},
	file = {Full Text PDF:/home/jonathan/snap/zotero-snap/common/Zotero/storage/JU28V7GC/Gabai - 1983 - Foliations and the topology of 3-manifolds.pdf:application/pdf;Snapshot:/home/jonathan/snap/zotero-snap/common/Zotero/storage/HMBVNNFA/1214437784.html:text/html},
}

@article{fenley_structure_1998,
	title = {The structure of branching in {Anosov} flows of 3-manifolds},
	volume = {73},
	issn = {1420-8946},
	url = {https://doi.org/10.1007/s000140050055},
	doi = {10.1007/s000140050055},
	abstract = {In this article we study the topology of Anosov flows in 3-manifolds. Specifically we consider the lifts to the universal cover of the stable and unstable foliations and analyze the leaf spaces of these foliations. We completely determine the structure of the non Hausdorff points in these leaf spaces. There are many consequences: (1) when the leaf spaces are non Hausdorff, there are closed orbits in the manifold which are freely homotopic, (2) suspension Anosov flows are, up to topological conjugacy, the only Anosov flows without free homotopies between closed orbits, (3) when there are infinitely many stable leaves (in the universal cover) which are non separated from each other, then we produce a torus in the manifold which is transverse to the Anosov flow and therefore is incompressible, (4) we produce non Hausdorff examples in hyperbolic manifolds and derive important properties of the limit sets of the stable/unstable leaves in the universal cover.},
	language = {en},
	number = {2},
	urldate = {2020-06-05},
	journal = {Commentarii Mathematici Helvetici},
	author = {Fenley, S. R.},
	month = jun,
	year = {1998},
	keywords = {Mathematics - Dynamical Systems, Mathematics - Geometric Topology},
	pages = {259--297},
	annote = {Comment: LaTeX, uuencoded tar file, uses 2 included macro files, contains 17 Encapsulated PostScript illustrations, 45 pages},
	file = {arXiv\:math/9411204 PDF:/home/jonathan/snap/zotero-snap/common/Zotero/storage/32F3QR6Y/Fenley - 1994 - The structure of branching in Anosov flows of 3-ma.pdf:application/pdf;arXiv.org Snapshot:/home/jonathan/snap/zotero-snap/common/Zotero/storage/86W7K4PV/9411204.html:text/html;Full Text:/home/jonathan/snap/zotero-snap/common/Zotero/storage/DC8RPRK8/Fenley - 1998 - The structure of branching in Anosov flows of 3-ma.pdf:application/pdf},
}

@misc{culler_snappy_nodate,
	title = {{SnapPy}, a computer program for studying the geometry and topology of \$3\$-manifolds},
	author = {Culler, Marc and Dunfield, Nathan M. and Goerner, Matthias and Weeks, Jeffrey R.},
	note = {Published: Available at http://snappy.computop.org (01/07/2019)},
}

@article{dunfield_floer_2019,
	title = {Floer homology, group orderability, and taut foliations of hyperbolic 3-manifolds},
	url = {http://arxiv.org/abs/1904.04628},
	abstract = {This paper explores the conjecture that the following are equivalent for rational homology 3-spheres: having left-orderable fundamental group, having non-minimal Heegaard Floer homology, and admitting a co-orientable taut foliation. In particular, it adds further evidence in favor of this conjecture by studying these three properties for more than 300,000 hyperbolic rational homology 3-spheres. New or much improved methods for studying each of these properties form the bulk of the paper, including a new combinatorial criterion, called a foliar orientation, for showing that a 3-manifold has a taut foliation.},
	urldate = {2020-05-28},
	journal = {arXiv:1904.04628 [math]},
	author = {Dunfield, Nathan M.},
	month = nov,
	year = {2019},
	note = {arXiv: 1904.04628},
	keywords = {Mathematics - Geometric Topology, Mathematics - Group Theory},
	annote = {Comment: 49 pages, 13 figures and tables; V2: corrected typos, to appear in Geometry and Topology},
	file = {arXiv Fulltext PDF:/home/jonathan/snap/zotero-snap/common/Zotero/storage/IUIMRDNU/Dunfield - 2019 - Floer homology, group orderability, and taut folia.pdf:application/pdf;arXiv.org Snapshot:/home/jonathan/snap/zotero-snap/common/Zotero/storage/P59JCGEU/1904.html:text/html},
}

@article{hodgson_symmetries_1994,
	title = {Symmetries, isometries and length spectra of closed hyperbolic three-manifolds},
	volume = {3},
	issn = {1058-6458, 1944-950X},
	url = {https://projecteuclid.org/euclid.em/1048515809},
	abstract = {Previously known algorithms to compute the symmetry group of a cusped hyperbolic three-manifold and to test whether two cusped hyperbolic three-manifolds are isometric do not apply directly to closed manifolds. But by drilling out geodesics from closed manifolds one may compute their symmetry groups and test for isometries using the cusped manifold techniques. To do so, one must know precisely how many geodesics of a given length the closed manifold has. Here we prove the propositions needed to rigorously compute a length spectrum, with multiplicities. We also tabulate the symmetry groups of the smallest known closed hyperbolic three-manifolds.},
	language = {en},
	number = {4},
	urldate = {2020-05-28},
	journal = {Experimental Mathematics},
	author = {Hodgson, Craig D. and Weeks, Jeffrey R.},
	year = {1994},
	mrnumber = {MR1341719},
	zmnumber = {0841.57020},
	note = {Publisher: A K Peters, Ltd.},
	keywords = {Hyperbolic three-manifold, isometry, length spectrum, symmetry},
	pages = {261--274},
	file = {Full Text PDF:/home/jonathan/snap/zotero-snap/common/Zotero/storage/LFEGC3S7/Hodgson and Weeks - 1994 - Symmetries, isometries and length spectra of close.pdf:application/pdf;Snapshot:/home/jonathan/snap/zotero-snap/common/Zotero/storage/8BVSD55R/1048515809.html:text/html},
}

@article{eisenbud_transverse_1981,
	title = {Transverse foliations of {Seifert} bundles and self homeomorphism of the circle},
	volume = {56},
	issn = {1420-8946},
	url = {https://doi.org/10.1007/BF02566232},
	doi = {10.1007/BF02566232},
	language = {en},
	number = {1},
	urldate = {2020-04-23},
	journal = {Commentarii Mathematici Helvetici},
	author = {Eisenbud, David and Hirsch, Ulrich and Neumann, Walter},
	month = dec,
	year = {1981},
	pages = {638--660},
	file = {Springer Full Text PDF:/home/jonathan/snap/zotero-snap/common/Zotero/storage/3YNKT3S4/Eisenbud et al. - 1981 - Transverse foliations of Seifert bundles and self .pdf:application/pdf},
}

@article{thurston_norm_1986,
	title = {A norm for the homology of 3-manifolds},
	volume = {59},
	issn = {0065-9266},
	number = {339},
	journal = {A norm for the homology of 3-manifolds},
	author = {Thurston, William P.},
	year = {1986},
	note = {Place: Providence, RI
Publisher: American Mathematical Society},
	pages = {99--130},
	file = {A norm for the homology of 3-manifolds - Search results - Pascal and Francis Bibliographic Databases:/home/jonathan/snap/zotero-snap/common/Zotero/storage/B922IPI3/index.html:text/html},
}

@article{kronheimer_scalar_1997,
	title = {Scalar curvature and the {Thurston} norm},
	volume = {4},
	doi = {10.4310/MRL.1997.v4.n6.a12},
	journal = {Mathematical Research Letters},
	author = {Kronheimer, P. and Mrowka, Tomasz},
	month = jan,
	year = {1997},
	file = {Full Text PDF:/home/jonathan/snap/zotero-snap/common/Zotero/storage/YYDRZ8TA/Kronheimer and Mrowka - 1997 - Scalar curvature and the Thurston norm.pdf:application/pdf},
}

@article{barthelme_anosov_2017,
	title = {Anosov flows in dimension 3 - {Preliminary} version},
	url = {http://www.crm.umontreal.ca/sms/2017/pdf/diapos/Anosov_flows_in_3_manifolds.pdf},
	language = {en},
	author = {Barthelme, Thomas},
	year = {2017},
	file = {Barthelme - ANOSOV FLOWS IN DIMENSION 3 PRELIMINARY VERSION.pdf:/home/jonathan/snap/zotero-snap/common/Zotero/storage/A3PF25MD/Barthelme - ANOSOV FLOWS IN DIMENSION 3 PRELIMINARY VERSION.pdf:application/pdf},
}
\end{document}